\newcommand{\vnu}{{\boldsymbol{\nu}}}
\newcommand{\vgamma}{{\boldsymbol{\gamma}}}
\newcommand{\vxi}{{\boldsymbol{\xi}}}
\newcommand{\vb}{{\mathbf{b}}}
\newcommand{\vd}{{\mathbf{d}}}
\newcommand{\vh}{{\mathbf{h}}}
\newcommand{\vu}{{\mathbf{u}}}
\newcommand{\vv}{{\mathbf{v}}}
\newcommand{\vx}{{\mathbf{x}}}
\newcommand{\vy}{{\mathbf{y}}}
\newcommand{\vz}{{\mathbf{z}}}
\newcommand{\vA}{{\mathbf{A}}}
\newcommand{\vC}{{\mathbf{C}}}
\newcommand{\vG}{{\mathbf{G}}}
\newcommand{\vH}{{\mathbf{H}}}
\newcommand{\vI}{{\mathbf{I}}}
\newcommand{\vJ}{{\mathbf{J}}}
\newcommand{\vM}{{\mathbf{M}}}
\newcommand{\cD}{{\mathcal{D}}}
\newcommand{\cL}{{\mathcal{L}}}
\newcommand{\cP}{{\mathcal{P}}}
\newcommand{\cS}{{\mathcal{S}}}
\newcommand{\cX}{{\mathcal{X}}}
\newcommand{\vareps}{\varepsilon}
\newcommand{\RR}{\mathbb{R}} 
\newcommand{\zz}{^{\top}} 
\newcommand{\vzero}{\mathbf{0}} 
\newcommand{\dist}{\mathrm{dist}}    
\newcommand{\prox}{{\mathbf{prox}}} 
\newcommand{\dom}{{\mathrm{dom}}} 
\newcommand{\st}{\mbox{ s.t. }}
\DeclareMathOperator*{\argmin}{arg\,min} 
\DeclareMathOperator*{\Argmin}{Arg\,min} 
\newcommand{\bc}{\begin{center}}
\newcommand{\ec}{\end{center}}
\newcommand{\bdm}{\begin{displaymath}}
\newcommand{\edm}{\end{displaymath}}
\newcommand{\beq}{\begin{equation}}
\newcommand{\eeq}{\end{equation}}
\newcommand{\bfl}{\begin{flushleft}}
\newcommand{\efl}{\end{flushleft}}
\newcommand{\bt}{\begin{tabbing}}
\newcommand{\et}{\end{tabbing}}
\newcommand{\beqn}{\begin{eqnarray}}
\newcommand{\eeqn}{\end{eqnarray}}
\newcommand{\beqs}{\begin{align*}} 
\newcommand{\eeqs}{\end{align*}}  
\newtheorem{remark}{Remark}[section]
\newtheorem{assumption}{Assumption}
\numberwithin{equation}{section}
\numberwithin{theorem}{section}
\begin{document}
	
	\title{A
		Near-optimal Method for Linearly Constrained Composite Non-convex Non-smooth Problems}
	
	\author{Wei Liu\thanks{\{liuw16, xuy21\}@rpi.edu, Department of Mathematical Sciences, Rensselaer Polytechnic Institute, Troy, NY}, \and  Qihang Lin\thanks{qihang-lin@uiowa.edu, Department of Business Analytics, University of Iowa, Iowa City, IA}, \and Yangyang Xu$^*$}

	\date{\today}
		\maketitle
	
	

	\begin{abstract}
	We study first-order methods (FOMs) for solving \emph{composite nonconvex nonsmooth} optimization with linear constraints. Recently, the lower complexity bounds  of FOMs on finding an ($\varepsilon,\varepsilon$)-KKT point of the considered problem is established in \cite{liu2025lowercomplexityboundsfirstorder}. However, optimization algorithms that achieve this lower bound had not been developed.
	In this paper, we propose an inexact proximal gradient method, where subproblems are solved using a recovering primal-dual procedure.
	Without making the bounded domain assumption, we establish that the oracle complexity of the proposed method, for finding an ($\varepsilon,\varepsilon$)-KKT point of the considered problem, matches the lower bounds up to a logarithmic factor. Consequently, in terms of the complexity, our algorithm outperforms all existing methods.  We demonstrate the advantages of our proposed algorithm over the (linearized) alternating direction method of multipliers and the (proximal) augmented Lagrangian method in the numerical experiments.
\end{abstract}

\noindent {\bf Keywords.} worst-case complexity, nonconvex, nonsmooth, first-order methods
\vspace{0.3cm}

\noindent {\bf Mathematics Subject Classification.} 68Q25, 65Y20, 90C06, 90C60

	\section{Introduction}\label{sec:intro}
First-order methods (FOMs) have attracted increasing attention due to their efficiency in solving large-scale problems.
In this paper,  we explore this topic for problems with a composite nonconvex nonsmooth objective function and linear constraints, formulated as 
\begin{equation}\label{eq:model}
	\begin{aligned}
		\min_{ \vx\in\RR^d} \,\,& F_0( \vx):= f_0( \vx) + g( \overline{\vA} \vx +\overline{\vb}), \\ 
		\st & \vA \vx +  \vb=\mathbf{0},
	\end{aligned}
	\tag{P}
\end{equation}	
where  $\vA\in\RR^{n\times d}$, $\vb\in\RR^{n}$, $\overline{\vA}\in \RR^{\overline{n}\times d}$, $\overline{\vb}\in\RR^{\overline{n}}$, $f_0:\mathbb{R}^d \rightarrow \mathbb{R}$ is smooth, and $g$ is a closed convex function. This model is widely used in problems requiring a specific sparsity structure in the desired solution,  such as  lasso and group lasso problems~\cite{tibshirani1996regression,yuan2006model}.
We make the following  structural assumptions throughout this paper.
\begin{assumption}The following statements hold.
	\label{assume:problemsetup}
	\begin{itemize}
		\item[\textnormal{(a)}]  $\nabla f_0$ is $L_f$-Lipschitz continuous,  i.e., 
		$
		\left\|\nabla f_0\left(\vx\right)- \nabla f_0\left(\vx'\right)\right\| \leq L_f\left\|\vx-\vx'\right\|,\quad \forall\, \vx,\,\vx'\in \mathbb{R}^d.
		$ 
		\item[\textnormal{(b)}] $\inf_{\vx} F_0(\vx) > -\infty$.
		\item[\textnormal{(c)}]	
		${g}: \mathbb{R}^{\overline n} \rightarrow \mathbb{R} \cup\{+\infty\}$ is a proper lower semicontinuous convex function but potentially nonsmooth.  
		\item[\textnormal{(d)}]	 $\vA$ has  a full-row rank. 
		There exists a feasible point $\vx$ of~\eqref{eq:model} such that $\overline{\vA} \vx +\overline{\vb}$ is in the relative interior of $\dom( g)$. 
		\item[\textnormal{(e)}] $g$ is $l_g$-Lipschitz continuous over $dom(g)$ for $l_g>0$ .
	\end{itemize}
\end{assumption}
The full-row rankness of $\vA$ is to ensure the existence of KKT point and the necessity of the KKT conditions for local optimality.
We assume the following first-order oracle accessible at any given input:  
\begin{align}
	&\mathrm{ORACLE}(\vx, \vy, \vz, \eta)\mbox{ returns } \big(\nabla f_0(\vx), \bar\vA\vx, \vA\vx, \bar\vA\zz \vy, \vA\zz \vz, \vb, \overline{\vb}, \prox_{\eta g}(\vy), \prox_{\eta g}(\bar\vA\vx + \bar\vb)\big),\label{oracle}\\
	&\hspace{8cm} \forall\, \vx\in\RR^d, \vy\in\RR^{\bar n}, \vz\in\RR^n, \eta >0\nonumber
\end{align}
with $
\prox_{\eta g}(\vx):=\arg\min_{ \vx'} \left\{ \textstyle g(\vx')+\frac{1}{2\eta}\|\vx'-\vx\|^2 \right\}$ for any $\vx\in \RR^d$  and $\eta > 0.
$
The complexity of first-order methods (FOMs) is frequently evaluated through their oracle complexity, which quantifies the number of oracle calls needed to obtain an approximate stationary point. This metric is critical for understanding the practical efficiency of algorithms, especially for solving constrained nonconvex nonsmooth problems. Numerous studies have explored the oracle complexity of FOMs within this context, providing theoretical guarantees across diverse formulations \cite{bian2015linearly, haeser2019optimality, kong2019complexity, jiang2019structured, melo2020iteration1, zhang2020proximal, o2021log, liu2022linearly, zhang2022global}.

By the oracle in \eqref{oracle}, \cite{liu2025lowercomplexityboundsfirstorder} shows that the lower bound to achieve an ($\vareps,\vareps$)-KKT point (see Definition~\ref{def:eps-pt-P}) of problem~\eqref{eq:model} is  at least
$\Omega({\kappa([\overline{\vA};\vA]) L_f \Delta_{F_0}} \vareps^{-2}),$ where
$$
[\overline{\vA};\vA]:= \left[
\begin{array}{c}
	\overline{\vA}\\
	\vA
\end{array}
\right],\quad
\kappa(\vA):= \sqrt{\frac{\lambda_{\max}(\vA\vA\zz)}{\lambda^+_{\min}(\vA\vA\zz)}},\text{ and } \Delta_{F_0}=F_0(\vx^{(0)})-\inf_\vx F_0(\vx)
$$
with $\lambda^+_{\min}(\vA\vA\zz)$  and $\lambda_{\max}(\vA\vA\zz)$ being the smallest positive and the largest eigenvalues of $\vA\vA\zz$, respectively. 
However,  it remains unclear whether an algorithm can find an ($\vareps,\vareps$)-KKT point of problem~\eqref{eq:model} by $\mathcal{O}({\kappa([\overline{\vA};\vA]) L_f \Delta_{F_0}} \vareps^{-2})$ first-order oracles as defined in \eqref{oracle}.  This paper aims to close this gap.

Bridging the gap between upper and lower complexity bounds is essential for two reasons. First, achieving an optimal algorithm for composite nonconvex nonsmooth problems with linear constraints would unify existing results, extending optimality beyond specific subclasses like proximal gradient methods for unconstrained problems~\cite{nesterov2012make} and projected gradient methods for linearly constrained smooth problems~\cite{sun2019distributed, carmon2021lower}. The key challenge lies in simultaneously handling nonconvex regularization and linear constraints. Second, an optimal algorithm would ensure robustness to problem parameters such as the Lipschitz constant $L_f$ and the condition number, addressing the performance degradation that limits existing methods.

\subsection{Algorithm framework}\label{sec:alg}
The FOM that we will design is based on the framework of an inexact Proximal Gradient method, with its subproblems solved by a Recovering Primal-Dual  step (\mbox{PG-RPD}). 

First, we apply a variable splitting technique, by introducing a new variable $\vy\in\mathbb{R}^{\overline n}$, to reformulate problem~\eqref{eq:model} to its Splitting Problem
\begin{equation}
	\label{eq:model-spli}
	\begin{aligned}		
		\min_{ \vx\in\RR^d, \vy\in\RR^{\overline n}} &~F(\vx,\vy):=f_0(\vx) +  g( \vy),\\
		\st \,\,\,\,\,&\vA \vx + \textbf{b} =0,\,\, \vy = \overline{\vA} \vx + \overline{\textbf{b}}.
	\end{aligned}		\tag{SP}
\end{equation}
Next, we propose 
generating a sequence $\{(\vx^{(k)},\vy^{(k)})\}$ by 
\begin{equation}
	\label{eq:subpga-2}
	\begin{aligned}
		(\vx^{(k+1)},\vy^{(k+1)})\approx\argmin_{\vx, \vy} \,\,&\underbrace{\left\langle \nabla f_0(\vx^{(k)}), \vx-\vx^{(k)}\right\rangle+\frac{\tau}{2}\left\|\vx-{\vx}^{(k)}\right\|^2}_{:=\overline{f}(\vx)}+ g(\vy)
		\\
		\st \,\,&\vy=
		\overline{\vA}\vx +
		\overline{\vb}, \,\, {\vA}\vx +
		{\vb}=\mathbf{0},
	\end{aligned}
\end{equation}		
for each $k\ge0$, where $\tau \ge L_f$. Solving the minimization problem in~\eqref{eq:subpga-2} exactly can be difficult due to the coexistence of affine constraints and the regularization term. 
%
Hence, we pursue an inexact solution $(\vx^{(k+1)},\vy^{(k+1)})$ to a desired accuracy. We consider the following Lagrangian function of the problem in~\eqref{eq:subpga-2} 
\begin{equation}\label{eq:lag-func}
	\mathcal{L}_k(\vx, \vy, \vz)=\overline{f}(\vx)+ g(\vy)-\vz_1^{\top}\left(\vy-\left(
	\overline{\vA} \vx+
	\overline{\vb} \right)\right)+ \vz_2^{\top}\left(
	{\vA} \vx+
	{\vb} \right),
\end{equation}
where $\vz_1\in \RR^{\overline{n}}$, $\vz_2\in\RR^n$ are dual variables, and $\vz=(\vz_1\zz,\vz_2\zz)\zz$. Let $\mathcal{D}_k$ be the negative Lagrangian dual function
\footnote{For ease of discussion, we formulate the Lagrangian dual problem into a minimization one by negating the dual function.}, i.e., 
$$
\mathcal{D}_k(\vz):= - \min_{\vx, \vy}\cL_k(\vx, \vy, \vz) = {\frac{1}{2\tau}\|\overline{\vA}\zz\vz_1 +\vA\zz\vz_2 +\nabla f_0(\vx^{(k)})-\tau \vx^{(k)}\|^2} +  g^{\star} (\vz_1)- \vz_1^{\top}\overline{\vb}- \vz_2^{\top}\vb, \forall\,\vz,
$$
where $  g^{\star}$ is the convex conjugate function of $ g$, i.e., $ g^{\star}(\vz_1)=\max_{\vy}\{\vy^\top\vz_1- g(\vy)\}$. 
We then define
\begin{equation}
	\label{eq:sublb-2}
	\Omega^{(k+1)}:=\Argmin_{\vz} \mathcal{D}_k(\vz)\quad \text{ and }\quad 	\mathcal{D}_k^*:=\min_{\vz} \mathcal{D}_k(\vz).
\end{equation}

Note that $\mathbf{prox}_{\eta g^{\star}}(\vz_1)= \vz_1-\mathbf{prox}_{\eta g(\cdot/\eta)}(\vz_1)= \vz_1-\eta\mathbf{prox}_{\eta^{-1} g(\cdot)}(\vz_1/\eta)$~\cite{rockafellar1970convex}. Thus 
the proximal operator $\mathbf{prox}_{\eta g^{\star}}(\vz)$ can be calculated easily if $\mathbf{prox}_{\eta g(\cdot/\eta)}(\vz_1)$ can be.  
We find a near-optimal point $\vz^{(k+1)}=((\vz_1^{(k+1)})\zz, (\vz_2^{(k+1)})\zz)\zz$ of the convex problem $\min_{\vz} \mathcal{D}_k(\vz)$ such that  
\begin{equation}
	\label{eq:boundz-2}
	\vz_1^{(k+1)}\in \dom( g^{\star})\quad\text{ and }\quad
	\dist(\vz^{(k+1)}, \Omega^{(k+1)})\leq \mu_k, 
\end{equation} 
where $\mu_k>0$ is a small number (to be specified). We will show that under certain conditions (i.e., either Assumption~\ref{ass:dual2} or Assumption~\ref{ass:polyhedralg}), an \emph{accelerated proximal gradient} (APG) method, e.g., the one in~\cite{nesterov2013gradient}, or a restarted APG method, can approach $\Omega^{(k+1)}$ in~\eqref{eq:sublb-2} at a linear rate. Though the second condition in \eqref{eq:boundz-2} cannot be \emph{directly} verified, it can be guaranteed by using the (restarted) APG with \emph{computable} inputs. Thus, a point $\vz^{(k+1)}$ satisfying \eqref{eq:boundz-2} can be found efficiently; see Theorems~\ref{thm:inner-iter-dual2}--\ref{thm:complesub}, along with the subsequent remarks. 

Finally, given $\sigma>0$, we obtain a primal solution from $\vz^{(k+1)}$ by the recovering steps:
\begin{eqnarray}
	\label{eq:xupdate-2}
	{\vx}^{(k+1)}&=& \vx^{(k)}-\frac{1}{\tau} \left(\overline{\vA}\zz\vz_1^{(k+1)} +\vA\zz\vz_2^{(k+1)} +\nabla f_0(\vx^{(k)})\right),\\
	\label{eq:yupdate-2}
	{\vy}^{(k+1)}&=& \prox_{\sigma^{-1}  g}\left(\sigma^{-1} \vz_1^{(k+1)} +\overline{\vA}{\vx}^{(k+1)} +
	\overline{\vb}  \right).
\end{eqnarray}
This procedure is presented in Algorithm~\ref{alg:ipganormal}. 

\begin{algorithm}[H]
	\caption{An inexact proximal gradient method for problem~\eqref{eq:model-spli}}\label{alg:ipganormal}
	\begin{algorithmic}[1]
		\State \textbf{Input:} a feasible initial point $({\vx}^{(0)}, \vy^{(0)})$, $\sigma>0$, $\vareps>0$, and $\tau> L_f$. 
		\State Choose $\{\mu_k\}_{k\ge 0}\subset \RR_{++}$ and let $k\leftarrow 0$.
		\While {an $\vareps$-KKT  point of problem~\eqref{eq:model-spli} is not obtained}
		
		\State Calculate  a near-optimal point $\vz^{(k+1)}$ of problem~$\min_{\vz} \mathcal{D}_k(\vz)$ that satisfies the conditions in~\eqref{eq:boundz-2}. 
		
		\State Set $\vx^{(k+1)}$ by~\eqref{eq:xupdate-2} and $\vy^{(k+1)}$ by~\eqref{eq:yupdate-2}.
		
		\State Let $k\leftarrow k+1$.
		
		\EndWhile
		\State \textbf{Output:} $(\vx^{(k)},\vy^{(k)})$.
	\end{algorithmic}
\end{algorithm}

\subsection{Contributions}
We make significant contributions toward 
achieving (near) optimal complexity for solving nonsmooth, nonconvex optimization problems with linear constraints, in the form of  \eqref{eq:model}. To this end, we propose an inexact proximal gradient algorithm, PG-RPD, featuring a recovering primal-dual  subroutine specifically designed for such structured optimization tasks. 
A key strength of PG-RPD is that every step of the algorithm is explicitly computable, making it fully implementable in practical applications.
In addition,
the convergence of PG-RPD does not rely on restrictive assumptions such as boundedness of the domain, thus broadening its applicability to a wide range of practical scenarios.

We   establish that PG-RPD achieves an oracle complexity of
$
\mathcal{O}({\kappa([\overline{\mathbf{A}}; \mathbf{A}]) L_f \Delta_{F_0}} \vareps^{-2} \log(\delta^{-1}\varepsilon^{-1})\big)
$
for obtaining a 
\( (\delta, \vareps) \)-KKT point of~\eqref{eq:model}.
  Consequently,  {PG-RPD} can produce a solution that is $\delta$-close to an \( \vareps \)-KKT point 
  of problem~\eqref{eq:model} within   the complexity of
$
\mathcal{O}({\kappa([\overline{\mathbf{A}}; \mathbf{A}]) L_f \Delta_{F_0}} \vareps^{-2} \log(\vareps^{-1})\big)
$, if $\delta$ is a polynomial of $\vareps$. 
This result matches a known lower complexity bound \cite{liu2025lowercomplexityboundsfirstorder} up to a logarithmic factor and a constant multiplier, making the PG-RPD algorithm nearly optimal.
Notably, our complexity analysis demonstrates an improvement over existing methods by significantly reducing the dependence on the condition number $\kappa([\overline{\vA};\vA])$, particularly evident in the worst-case scenarios used to establish lower complexity bounds in \cite{liu2025lowercomplexityboundsfirstorder}. 

Furthermore, numerical experiments illustrate the clear advantages of our proposed PG-RPD algorithm over the (linearized) ADMM from \cite{melo2017iteration3} and the (proximal) ALM from \cite{rockafellar1976augmented}. The performance improvement is particularly pronounced in problems with large condition numbers, underscoring the robustness and efficiency of our method in ill-conditioned scenarios.
\subsection{Notations and definitions} 
For any $a\in\RR$, $\lceil a\rceil$ refers to the smallest integer that is no less than~$a$.  
We use 
$\mathbf{0}$ to represent an all-zero vector when its dimension is clear from the context. 
A vector $\vx$ is said to be $\delta$-close to another vector $\widehat\vx$ if $\|\vx -\widehat\vx\| \le \delta$. For any set $\cX$, we denote $\iota_\cX$ as its indicator function, i.e., $\iota_\cX(\vx) = 0$ if $\vx\in\cX$ and $+\infty$ otherwise.	 
$\partial g$ denotes the subdifferental of a closed convex function $g$. 
\begin{definition}\label{def:eps-pt-P}
	Given $\vareps \ge0$, a point $\vx^*$ is called an $\vareps$-KKT point of~\eqref{eq:model} if for some $\vgamma\in\RR^n$, it holds 
	\begin{equation}
		\label{eq:epsta}
		\begin{aligned}
			&\max\left\{\dist\left(\mathbf{0},\nabla f_0(\vx^*)+\vA\zz \vgamma 
			+\overline\vA\zz \partial g(\overline\vA\vx^*+\overline\vb)  \right), \left\| \vA\vx^*+\vb\right\|
			\right\}\leq \vareps,
		\end{aligned}
	\end{equation}
	and a point $(\vx^*, \vy^*)$ is called an $\vareps$-KKT point of~\eqref{eq:model-spli} if for some $\vgamma_1\in \RR^{\overline{n}}$ and $\vgamma_2\in\RR^n$, it holds 
	\begin{equation}
		\label{eq:kktviofgesub}
		\begin{aligned}
			&	\max\bigg\{ \dist (\vzero, \partial  g(\vy^*) - \vgamma_1), \left\|\nabla f_0(\vx^*) + \overline{\vA}\zz \vgamma_1 + \vA\zz \vgamma_2  \right\|,   \|\vy^*-\overline{\vA}\vx^*-\overline{\vb}\|,  \|\vA\vx^*+\vb\| \bigg\} 
			\leq\vareps.
		\end{aligned}
	\end{equation}	
	When $\vareps = 0$, we simply call $\vx^*$ and $(\vx^*, \vy^*)$ KKT points of problems~\eqref{eq:model} and~\eqref{eq:model-spli}, respectively.	We say that $\overline{\vx}$ is a  $(\delta, \vareps)$-KKT point of~\eqref{eq:model} if it is $\delta$-close to an $\vareps$-KKT point $\vx^*$ of~\eqref{eq:model}.
\end{definition}	


\subsection{Organizations}

The rest of this paper is organized as follows. Section~\ref{sec:relatedwork} provides a review of the related literature. In Section~\ref{sec:outer}, we analyze the worst-case oracle complexity of Algorithm \ref{alg:ipganormal} for solving problems~\eqref{eq:model} and~\eqref{eq:model-spli}. 
Section~\ref{sec:numer} presents the experimental results. Finally, concluding remarks are provided in Section~\ref{sec:conclusion}.

\section{Related work}\label{sec:relatedwork}	
In this section, we review several algorithms for solving problem~\eqref{eq:model} and discuss why they do not match the known lower complexity bound, indicating potential room for improvement.

Penalty-based methods represent an important class of algorithms for problem~\eqref{eq:model} and its variants. For instance, Kong et al.\cite{kong2019complexity} develop a quadratic-penalty accelerated inexact proximal point method with an oracle complexity of $\mathcal{O}(\varepsilon^{-3})$. Lin et al.\cite{lin2022complexity} refine this approach, achieving a complexity improvement to $\mathcal{O}(\varepsilon^{-5/2})$. These results do not match the best-known lower-bound complexity $\Omega({\kappa([\overline{\vA};\vA]) L_f \Delta_{F_0}} \vareps^{-2})$.

The augmented Lagrangian method (ALM) and the alternating direction method of multipliers (ADMM) have also proven effective for these problems. The oracle complexity of ALM for problem~\eqref{eq:model} has been rigorously studied by~\cite{hong2016decomposing, hajinezhad2019perturbed, melo2020iteration1, zhang2022global}. For example, the inexact proximal accelerated ALM proposed by Melo et al.\cite{melo2020iteration1} achieves an oracle complexity of $\mathcal{O}(\varepsilon^{-5/2})$. Notably, Zhang et al.\cite{zhang2022global} improve the complexity to $\mathcal{O}(\varepsilon^{-2})$ under the special case where $g(\mathbf{x})$ is the indicator function of a polyhedral set.
ADMM is particularly effective for problem~\eqref{eq:model-spli} due to its separable structure. Studies on ADMM and its variants for constrained nonconvex optimization include \cite{melo2017iteration, melo2017iteration3, goncalves2017convergence, jiang2019structured, zhang2020proximal, yashtini2022convergence, zhang2022global, hong2016convergence,dahal2023damped}. Among them, \cite{goncalves2017convergence, yashtini2022convergence, jiang2019structured} demonstrate that ADMM can achieve an $\varepsilon$-KKT point with oracle complexity $\mathcal{O}(\varepsilon^{-2})$.

Despite significant progress,  ALM and ADMM do not achieve the known lower-bound complexity $\Omega(\kappa([\overline{\mathbf{A}}; \mathbf{A}]) L_f \Delta_{F_0} \varepsilon^{-2})$ for the problem class~\eqref{eq:model}. 
These two methods often rely on restrictive assumptions or exhibit sensitivity to problem parameters. For example, the oracle complexity of ADMM in~\cite{yashtini2022convergence} depends on the Kurdyka-Łojasiewicz (KŁ) coefficient and full row-rank of $[\overline{\vA}; \vA]$, making it difficult to compare directly with the lower bound. Similarly, the complexity bounds derived in~\cite{melo2017iteration, goncalves2017convergence, jiang2019structured} are specific to problems with separable structures or particular constraints (e.g., $\overline{\mathbf{A}} = \mathbf{I}_d$ or $g \equiv 0$).
Additionally, Zhang and Luo~\cite{zhang2022global} analyze the complexity of ALM under the assumption that $g$ is the indicator function of a polyhedral set, achieving a bound of $\mathcal{O}(\widehat{\kappa}^2 L_f^3 \Delta_{F_0} \varepsilon^{-2})$. When $g \equiv 0$, the complexity reduces to $\mathcal{O}(\kappa^2(\mathbf{A}) L_f^3 \Delta_{F_0} \varepsilon^{-2})$. We will present two detailed comparisons in Appendix \ref{sec:matching}.

The proximal point method has been a fundamental approach in optimization since 1976~\cite{rockafellar1976augmented}, and its oracle complexity for solving problem~\eqref{eq:model} has been analyzed in several recent works~\cite{li2021rate, lin2022complexity, kong2022iteration, zhu2023optimal}. Specifically, the methods in~\cite{kong2022iteration, li2021rate} achieve an $\varepsilon$-KKT point with a complexity of $\mathcal{O}(\varepsilon^{-3})$, while the approach in~\cite{lin2022complexity} follows a similar framework but improves the complexity to $\mathcal{O}(\varepsilon^{-5/2})$. The work in~\cite{zhu2023optimal} solves the dual problem of the proximal point subproblems at each iteration and attains an $(\varepsilon, \varepsilon)$-KKT point in  
$
\mathcal{O}(\kappa([\overline{\mathbf{A}}; \mathbf{A}]) L_f \Delta_{F_0} \varepsilon^{-2})
$
oracles. Their result relies on the assumption that the strongly convex problem  
\[
\min _{\mathbf{x}}\left\{\frac{1}{2}\|\mathbf{x}-\hat{\mathbf{x}}\|^2+\alpha \cdot g(\overline{\vA}\mathbf{x} + \overline\vb) \mid \mathbf{A x}+\mathbf{b}=\mathbf{0}\right\}
\]
can be efficiently solved, requiring only $\mathcal{O}(1)$ oracle calls as defined in \eqref{oracle}, for any given $\hat{\mathbf{x}}$ and $\alpha > 0$. However, the validity of this assumption is uncertain, which means that their complexity result does not fully resolve the problem, i.e., it does not match the lower bound.
For instance, if a projected gradient method is used, the number of oracles needed to reach a high-accuracy solution could be substantial, potentially resulting in numerous (or even infinite) calls to ${\mathbf{A}}$ and $\mathbf{b}$.

\section{Convergence and complexity analysis of Algorithm \ref{alg:ipganormal}} \label{sec:outer}

In this section, we establish the oracle complexity of Algorithm~\ref{alg:ipganormal}.
We assume either Assumption \ref{ass:dual2} or  Assumption \ref{ass:polyhedralg}.
\begin{assumption}
	\label{ass:dual2}
	$[\overline{\vA}; \vA]$ has a full-row rank.
\end{assumption}
\begin{assumption}
	\label{ass:polyhedralg}
	$g(\vy)=\max\{\vu^{\top}\vy : \vC \vu\leq\vd, \vu\in\RR^{\overline{n}}\}$ for some $\vC$ and $\vd$. 
\end{assumption}

We note that Assumption~\ref{ass:dual2} is a natural assumption when considering problem~\eqref{eq:model-spli}, as in ADMM methods. This ensures that the constraint set of problem~\eqref{eq:model-spli} satisfies the existence and necessity conditions for KKT points. Additionally, the
$l_1$ and $l_{\infty}$ regularizers satisfy Assumption~\ref{ass:polyhedralg}. Moreover, if \( g \) satisfies 
Assumption~\ref{ass:polyhedralg}, then its dual function \( g^\star \) satisfies
 Assumption~\ref{ass:polyhedral2} (given later), as  
$
g^\star(\vz) = \iota_{\vC \vz \leq \vd}(\vz).
$
The latter assumption is utilized in~\cite{zhang2022global, zhang2020proximal} to  establish the $O(\varepsilon^{-2})$ complexity results.

\subsection{Number of outer iterations for finding a stationary point}
To characterize the convergence property of Algorithm~\ref{alg:ipganormal}, we need the following two lemmas. 
\begin{lemma}
	\label{lem:barxyz-2}
	Suppose that Assumption~\ref{assume:problemsetup} holds.	For any $\sigma>0$ and any $\overline{\vz}^{(k+1)}\in\Omega^{(k+1)}$, let  $(\overline{\vx}^{(k+1)}, \overline{\vy}^{(k+1)})$ be the optimal solution of the strongly convex problem 
	\begin{equation}
		\label{eq:subxy-2}
		\min_{\vx, \vy }\left\{\mathcal{L}_k(\vx, \vy, \overline{\vz}^{(k+1)})+\frac{\sigma}{2}\|{\vy}-
		(\overline{\vA}{\vx} +
		\overline{\vb})\|^2+ \frac{\sigma}{2}\|
		{\vA}{\vx} +
		{\vb}\|^2\right\},
	\end{equation}
	where $\cL_k$ and $\Omega^{(k+1)}$ are defined in~\eqref{eq:lag-func} and~\eqref{eq:sublb-2}, respectively.			Then it holds that for any $k\ge0$,
	\begin{equation}
		\label{eq:feasiblexy-2}
		\overline{\vy}^{(k+1)}-
		(\overline{\vA}\overline{\vx}^{(k+1)} +
		\overline{\vb})=\mathbf{0},\quad {\vA}\overline{\vx}^{(k+1)} +
		{\vb}=\mathbf{0},
	\end{equation}
	\begin{equation}
		\label{eq:barx-2}
		\begin{aligned}
			\overline{\vx}^{(k+1)}= \vx^{(k)}-\frac{1}{\tau} \left(\overline{\vA}\zz\overline{\vz}_1^{(k+1)}+\vA\zz\overline{\vz}_2^{(k+1)}+\nabla f_0(\vx^{(k)})\right),
		\end{aligned}
	\end{equation}
	and
	\begin{equation}
		\label{eq:bary-2}
		\,\,\,\overline{\vy}^{(k+1)}= \prox_{\sigma^{-1} g}\left(\sigma^{-1} \overline{\vz}_1^{(k+1)} +\overline{\vA}\overline{\vx}^{(k+1)} +
		\overline{\vb}  \right).
	\end{equation}
\end{lemma}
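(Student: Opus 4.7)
The plan is to show that problem~\eqref{eq:subxy-2} inherits its unique solution from the primal problem in~\eqref{eq:subpga-2} through a saddle-point argument, after which \eqref{eq:barx-2}--\eqref{eq:bary-2} will follow from writing down first-order optimality conditions at a feasible point.

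\textbf{Uniqueness via strong convexity.} First I would argue that~\eqref{eq:subxy-2} admits a unique minimizer. The quadratic part of its objective has Hessian
\[
\begin{pmatrix} \tau\vI + \sigma\overline{\vA}\zz\overline{\vA} + \sigma\vA\zz\vA & -\sigma\overline{\vA}\zz \\ -\sigma\overline{\vA} & \sigma\vI \end{pmatrix},
\]
whose Schur complement with respect to the $\vy$-block equals $\tau\vI + \sigma\vA\zz\vA\succeq \tau\vI\succ 0$. Hence the quadratic part is jointly strongly convex in $(\vx,\vy)$, and adding the convex function $g(\vy)$ preserves strong convexity, so~\eqref{eq:subxy-2} has a unique minimizer $(\overline{\vx}^{(k+1)},\overline{\vy}^{(k+1)})$.

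\textbf{Reduction to the primal optimum via a saddle-point argument.} Next I would identify this minimizer with the unique primal optimum of~\eqref{eq:subpga-2}. Strong convexity of $\overline f$ on the affine feasible set guarantees that~\eqref{eq:subpga-2} has a unique primal solution $(\widehat{\vx},\widehat{\vy})$, while Slater's condition from Assumption~\ref{assume:problemsetup}(d) and convexity of the program give strong duality. Consequently every $\overline{\vz}^{(k+1)}\in\Omega^{(k+1)}$ is a Lagrange multiplier, and the saddle-point theorem yields $(\widehat{\vx},\widehat{\vy})\in\argmin_{\vx,\vy}\cL_k(\vx,\vy,\overline{\vz}^{(k+1)})$ together with primal feasibility $\widehat{\vy}=\overline{\vA}\widehat{\vx}+\overline{\vb}$ and $\vA\widehat{\vx}+\vb=\vzero$. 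At $(\widehat{\vx},\widehat{\vy})$ the two quadratic penalty terms in~\eqref{eq:subxy-2} vanish, so the augmented objective there equals $\min_{\vx,\vy}\cL_k(\vx,\vy,\overline{\vz}^{(k+1)})$, while for any other $(\vx,\vy)$ the augmented objective is at least $\cL_k(\vx,\vy,\overline{\vz}^{(k+1)})\geq \min\cL_k$. Hence $(\widehat{\vx},\widehat{\vy})$ attains the minimum in~\eqref{eq:subxy-2}, and by uniqueness $(\overline{\vx}^{(k+1)},\overline{\vy}^{(k+1)})=(\widehat{\vx},\widehat{\vy})$, which is exactly~\eqref{eq:feasiblexy-2}.

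\textbf{Closed-form formulas and main obstacle.} With feasibility established, the gradients of both $\frac{\sigma}{2}\|\vy-\overline{\vA}\vx-\overline{\vb}\|^2$ and $\frac{\sigma}{2}\|\vA\vx+\vb\|^2$ vanish at $(\overline{\vx}^{(k+1)},\overline{\vy}^{(k+1)})$. The $\vx$-stationarity condition of~\eqref{eq:subxy-2} therefore reduces to $\nabla f_0(\vx^{(k)}) + \tau(\overline{\vx}^{(k+1)}-\vx^{(k)}) + \overline{\vA}\zz\overline{\vz}_1^{(k+1)} + \vA\zz\overline{\vz}_2^{(k+1)} = \vzero$, which I solve for $\overline{\vx}^{(k+1)}$ to obtain~\eqref{eq:barx-2}. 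The $\vy$-inclusion becomes $\overline{\vz}_1^{(k+1)} - \sigma(\overline{\vy}^{(k+1)}-\overline{\vA}\overline{\vx}^{(k+1)}-\overline{\vb}) \in \partial g(\overline{\vy}^{(k+1)})$, which is precisely the proximal optimality condition characterizing $\overline{\vy}^{(k+1)} = \prox_{\sigma^{-1}g}(\sigma^{-1}\overline{\vz}_1^{(k+1)}+\overline{\vA}\overline{\vx}^{(k+1)}+\overline{\vb})$, yielding~\eqref{eq:bary-2}. The only non-routine step is the saddle-point identification in the middle paragraph; once strong duality and uniqueness of the primal optimum are secured, the remainder reduces to elementary manipulation of first-order conditions.
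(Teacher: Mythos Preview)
Your proposal is correct and follows essentially the same saddle-point route as the paper: both arguments invoke strong duality (via Assumption~\ref{assume:problemsetup}(d)) to place the primal optimum $(\widehat{\vx},\widehat{\vy})$ in $\Argmin_{\vx,\vy}\cL_k(\vx,\vy,\overline{\vz}^{(k+1)})$, then compare the augmented and unaugmented Lagrangians to force the penalty terms to vanish, and finally read off~\eqref{eq:barx-2}--\eqref{eq:bary-2} from first-order conditions. The only cosmetic difference is that you establish uniqueness of the augmented minimizer explicitly (via the Schur complement) and then identify it with $(\widehat{\vx},\widehat{\vy})$, whereas the paper sandwiches the augmented optimal value between two copies of $\cL_k(\widehat{\vx},\widehat{\vy},\overline{\vz}^{(k+1)})$ to deduce feasibility directly, without appealing to uniqueness.
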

\begin{proof}
	Let $(\widehat{\vx}^{(k+1)}, \widehat{\vy}^{(k+1)})$ be an optimal solution of the problem in~\eqref{eq:subpga-2}. 
	Under Assumption~\ref{assume:problemsetup}(d), the strong duality holds~\cite[Section 5.2.3]{boyd2004convex}.  Then the optimal objective value of the minimization problem in~\eqref{eq:subpga-2} is 
	$\max_{\vz} \mathcal{L}_k(\widehat{\vx}^{(k+1)}, \widehat{\vy}^{(k+1)}, {\vz})$. By the definition of $\overline{\vz}^{(k+1)}$ and the strong duality,  it holds that
	\begin{equation*}
		\mathcal{L}_k(\widehat{\vx}^{(k+1)}, \widehat{\vy}^{(k+1)}, \overline{\vz}^{(k+1)})\geq \min_{\vx, \vy} \mathcal{L}_k(\vx, \vy, \overline{\vz}^{(k+1)})=\max_{\vz} \mathcal{L}_k(\widehat{\vx}^{(k+1)}, \widehat{\vy}^{(k+1)}, {\vz})\geq  \mathcal{L}_k(\widehat{\vx}^{(k+1)}, \widehat{\vy}^{(k+1)}, \overline{\vz}^{(k+1)}).
	\end{equation*}
	Hence, the inequalities above must hold with equalities. 
	Thus
	$
	(\widehat{\vx}^{(k+1)}, \widehat{\vy}^{(k+1)})\in\Argmin_{\vx, \vy} \mathcal{L}_k(\vx, \vy, \overline{\vz}^{(k+1)}).
	$
	By this fact and also that $(\widehat{\vx}^{(k+1)}, \widehat{\vy}^{(k+1)})$ solves the problem in~\eqref{eq:subpga-2}, we obtain
	\begin{eqnarray*}
		\mathcal{L}_k(\widehat{\vx}^{(k+1)}, \widehat{\vy}^{(k+1)}, \overline{\vz}^{(k+1)}) 
		&=&\min_{\vx, \vy} \mathcal{L}_k(\vx, \vy, \overline{\vz}^{(k+1)})
		\leq  \mathcal{L}_k(\overline{\vx}^{(k+1)}, \overline{\vy}^{(k+1)}, \overline{\vz}^{(k+1)})\\
		&\leq&\mathcal{L}_k(\overline{\vx}^{(k+1)}, \overline{\vy}^{(k+1)}, \overline{\vz}^{(k+1)}) +\frac{\sigma}{2}\|\overline{\vy}^{(k+1)}-(\overline{\vA}{\overline{\vx}^{(k+1)}} +\overline{\vb})\|^2+ \frac{\sigma}{2}\|{\vA}{\overline{\vx}^{(k+1)}} +{\vb}\|^2\\
		&=& \min_{\vx, \vy} \left\{\mathcal{L}_k(\vx, \vy, \overline{\vz}^{(k+1)})+\frac{\sigma}{2}\|{\vy}-
		(\overline{\vA}{\vx} +\overline{\vb})\|^2+ \frac{\sigma}{2}\|
		{\vA}{\vx} +{\vb}\|^2\right\}\\		
		&\leq  &\mathcal{L}_k(\widehat{\vx}^{(k+1)}, \widehat{\vy}^{(k+1)}, \overline{\vz}^{(k+1)}) +\frac{\sigma}{2}\|\widehat{\vy}^{(k+1)}-(\overline{\vA}{\widehat{\vx}^{(k+1)}} +\overline{\vb})\|^2+ \frac{\sigma}{2}\|{\vA}{\widehat{\vx}^{(k+1)}} +{\vb}\|^2\\
		&=&\mathcal{L}_k(\widehat{\vx}^{(k+1)}, \widehat{\vy}^{(k+1)}, \overline{\vz}^{(k+1)}),
	\end{eqnarray*}
	where the second equality is by the definition of $(\overline{\vx}^{(k+1)}, \overline{\vy}^{(k+1)})$. Hence all the inequalities above must hold with   equalities. Thus~\eqref{eq:feasiblexy-2} follows and
	$
	(\overline{\vx}^{(k+1)}, \overline{\vy}^{(k+1)})\in\Argmin_{\vx, \vy} \mathcal{L}_k(\vx, \vy, \overline{\vz}^{(k+1)}),
	$
	the optimality condition of which gives 
	\eqref{eq:barx-2} and~\eqref{eq:bary-2}. This completes the proof.
	%
\end{proof}

\begin{remark}
	\label{rem:ipga2}
	The proof of Lemma~\ref{lem:barxyz-2} also implies that  $(\overline{\vx}^{(k+1)}, \overline{\vy}^{(k+1)})$ is an optimal solution of the problem in~\eqref{eq:subpga-2}. The lemma below bounds the inexactness of $(\vx^{(k+1)},\vy^{(k+1)})$ generated in Algorithm~\ref{alg:ipganormal}.
\end{remark}

\begin{lemma}
	\label{lem:boundxyz-2}
	Suppose that Assumption~\ref{assume:problemsetup} holds. Let $\{(\vx^{(k+1)},\vy^{(k+1)})\}_{k\ge 0}$ be generated from Algorithm~\ref{alg:ipganormal}. Denote the vector used to produce $(\vx^{(k+1)},\vy^{(k+1)})$ in~\eqref{eq:boundz-2}-\eqref{eq:yupdate-2} by $\vz^{(k+1)}=(({\vz}_1^{(k+1)})\zz,({\vz}_2^{(k+1)})\zz)\zz$. Let  $(\overline{\vx}^{(k+1)}, \overline{\vy}^{(k+1)})$ be defined as in Lemma~\ref{lem:barxyz-2} with $\overline{\vz}^{(k+1)} = \mathbf{proj}_{\Omega^{(k+1)}}(\vz^{(k+1)})$, and $l_f^k = \|\nabla f_0(\vx^{(k)})\|$. Then  the following inequalities hold for all $k\ge0$:
	\begin{align}
		&\label{eq:lemmaineq1}
		\|\overline{\vx}^{(k+1)}-{\vx}^{(k+1)}\|\leq \frac{1}{\tau}\left\|[\overline{\vA}; \vA]\right\|\mu_k,\quad  \|\overline{\vy}^{(k+1)}-{\vy}^{(k+1)}\|\leq \frac{1}{\tau}\left\|\overline{\vA}\right\|\left\|[\overline{\vA}; \vA]\right\|\mu_k +\sigma^{-1} \mu_k, \\
		&\label{eq:lemmaineq2}
		\|{\vy}^{(k+1)}-
		(\overline{\vA}{\vx}^{(k+1)} +	
		\overline{\vb})\|+ \|{\vA}{\vx}^{(k+1)} +
		{\vb}\| 
		\leq B_1 \mu_k,	\\
		&\label{eq:lemmaineq3}
		\|\vz_1^{(k+1)}\|\leq l_g, \quad \|\vz_2^{(k+1)}\|\leq B_2\mu_k + B_3^k,	\\	
		&\label{eq:lemmaineq4}
		\|\vx^{(k+1)}-\vx^{(k)}\| \leq B^k_4 +  \frac{1}{\tau} \left\|[\overline{\vA}; \vA]\right\|B_2 \mu_k,					
	\end{align}
	where $B_1,B_2,B_3^k,B_4^k$ are  defined by
	\begin{align*}
		& \textstyle B_1:=\frac{1}{\tau}\|\overline{\vA}\|\left\|[\overline{\vA}; \vA]\right\| +\sigma^{-1}  + 	\frac{1}{\tau}(\|\overline{\vA}\|+\|{\vA}\|)\left\|[\overline{\vA}; \vA]\right\|, \ B_2:= (1+\|( \vA \vA\zz)^{-1}\vA\| \left\|[\overline{\vA}; \vA]\right\|), \\
		& \textstyle B_3^k:= \|( \vA \vA\zz)^{-1}\vA\| (l_f^k+ \|\overline{\vA}\|l_g),  \ B_4^k:={\frac{1}{\tau} \left(l_f^k+\left\|[\overline{\vA}; \vA]\right\| \left( l_g+ B_3^k\right)\right)}.
	\end{align*}
\end{lemma}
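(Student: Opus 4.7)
The plan is to verify the four inequalities in order, exploiting the explicit updates \eqref{eq:xupdate-2}--\eqref{eq:yupdate-2}, the structural characterization of $(\overline{\vx}^{(k+1)},\overline{\vy}^{(k+1)})$ in Lemma~\ref{lem:barxyz-2}, and the closeness bound $\|\vz^{(k+1)} - \overline{\vz}^{(k+1)}\| \le \mu_k$ from \eqref{eq:boundz-2}.

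First I would prove \eqref{eq:lemmaineq1}. Subtracting \eqref{eq:barx-2} from \eqref{eq:xupdate-2} produces the clean identity
$\vx^{(k+1)} - \overline{\vx}^{(k+1)} = -\frac{1}{\tau}[\overline{\vA};\vA]\zz(\vz^{(k+1)} - \overline{\vz}^{(k+1)})$,
so taking norms yields the $\vx$-estimate. For $\vy$, I would subtract \eqref{eq:bary-2} from \eqref{eq:yupdate-2} and invoke the $1$-Lipschitzness of $\prox_{\sigma^{-1}g}$ to split the error into a $\sigma^{-1}\|\vz_1^{(k+1)}-\overline{\vz}_1^{(k+1)}\|$ term and a $\|\overline{\vA}\|\|\vx^{(k+1)} - \overline{\vx}^{(k+1)}\|$ term, then substitute the $\vx$-bound. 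Next, \eqref{eq:lemmaineq2} follows from the feasibility identities $\overline{\vy}^{(k+1)} = \overline{\vA}\overline{\vx}^{(k+1)} + \overline{\vb}$ and $\vA\overline{\vx}^{(k+1)} + \vb = \vzero$ supplied by \eqref{eq:feasiblexy-2}, which give
$\vy^{(k+1)} - \overline{\vA}\vx^{(k+1)} - \overline{\vb} = (\vy^{(k+1)} - \overline{\vy}^{(k+1)}) - \overline{\vA}(\vx^{(k+1)} - \overline{\vx}^{(k+1)})$
and $\vA\vx^{(k+1)} + \vb = \vA(\vx^{(k+1)} - \overline{\vx}^{(k+1)})$; combining with \eqref{eq:lemmaineq1} via the triangle inequality delivers $B_1\mu_k$.

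For \eqref{eq:lemmaineq3}, the bound $\|\vz_1^{(k+1)}\|\le l_g$ is immediate from $\vz_1^{(k+1)}\in\dom(g^{\star})$ (guaranteed by \eqref{eq:boundz-2}), together with the standard fact that $l_g$-Lipschitz continuity of $g$ (Assumption~\ref{assume:problemsetup}(e)) forces $\dom(g^{\star})$ to sit inside the ball of radius $l_g$. The bound on $\vz_2^{(k+1)}$ is the delicate piece. Rearranging \eqref{eq:xupdate-2} gives
$\vA\zz\vz_2^{(k+1)} = \tau(\vx^{(k)} - \vx^{(k+1)}) - \overline{\vA}\zz\vz_1^{(k+1)} - \nabla f_0(\vx^{(k)})$,
and since $\vA$ has full row rank, applying the pseudo-inverse factor $(\vA\vA\zz)^{-1}\vA$ yields
$\vz_2^{(k+1)} = (\vA\vA\zz)^{-1}\vA\bigl[\tau(\vx^{(k)} - \vx^{(k+1)}) - \overline{\vA}\zz\vz_1^{(k+1)} - \nabla f_0(\vx^{(k)})\bigr]$.
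The two non-leading terms contribute the constant $B_3^k$ via $\|\vz_1^{(k+1)}\|\le l_g$ and $\|\nabla f_0(\vx^{(k)})\|=l_f^k$. For the leading $\tau$-scaled term, the key trick is to write $\vA(\vx^{(k)} - \vx^{(k+1)}) = (\vA\vx^{(k)} + \vb) - (\vA\vx^{(k+1)} + \vb)$ and control both constraint violations through the estimate already proved in Step~2 (with the base case $\vA\vx^{(0)}+\vb=\vzero$ given by the feasible initialization), which feeds the $B_2\mu_k$ portion of the bound.

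Finally, \eqref{eq:lemmaineq4} follows by applying the triangle inequality directly to \eqref{eq:xupdate-2}, inserting $\|\vz_1^{(k+1)}\|\le l_g$, $\|\vz_2^{(k+1)}\|\le B_2\mu_k + B_3^k$, $\|\nabla f_0(\vx^{(k)})\| = l_f^k$, and the submultiplicative estimate $\max(\|\overline{\vA}\|,\|\vA\|)\le \|[\overline{\vA};\vA]\|$ to collect terms. I expect the main obstacle to be the $\vz_2^{(k+1)}$ bound in Step~3: the naive route through $\tau\|\vx^{(k+1)} - \vx^{(k)}\|$ would be circular with \eqref{eq:lemmaineq4} and would fail to yield the clean $O(\mu_k)$ scaling, because $\|\vx^{(k+1)}-\vx^{(k)}\|$ is not small. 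The resolution is to project onto the row space of $\vA$ before bounding, so that only the feasibility violation part of the step enters and the $B_3^k$--$B_2\mu_k$ split emerges naturally.
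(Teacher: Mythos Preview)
Your plan for \eqref{eq:lemmaineq1}, \eqref{eq:lemmaineq2}, and \eqref{eq:lemmaineq4} is exactly what the paper does. The only substantive difference is in how you propose to bound $\|\vz_2^{(k+1)}\|$.

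You work directly with the \emph{approximate} dual variable $\vz_2^{(k+1)}$, extracting it from the primal update \eqref{eq:xupdate-2} and then splitting $\vA(\vx^{(k)}-\vx^{(k+1)})$ into the two infeasibilities $\vA\vx^{(k)}+\vb$ and $\vA\vx^{(k+1)}+\vb$. This is a legitimate route, but it does not produce the constant $B_2=1+\|(\vA\vA\zz)^{-1}\vA\|\,\|[\overline{\vA};\vA]\|$ as stated: you pick up \emph{two} infeasibility terms (one scaled by $\mu_{k-1}$, one by $\mu_k$), and the additive ``$1$'' never appears. So your bound has the right $O(\mu_k)$ order but a different leading constant, and your claim that it ``feeds the $B_2\mu_k$ portion'' is not accurate as written.

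The paper instead bounds the \emph{exact} dual variable $\overline{\vz}_2^{(k+1)}$ first. Since $\cD_k$ is smooth in $\vz_2$, the stationarity condition $\nabla_{\vz_2}\cD_k(\overline{\vz}^{(k+1)})=\vzero$ yields the closed form
\[
\overline{\vz}_2^{(k+1)}=-(\vA\vA\zz)^{-1}\big(\vA\overline{\vA}\zz\overline{\vz}_1^{(k+1)}+\vA\nabla f_0(\vx^{(k)})-\tau\vA\vx^{(k)}-\tau\vb\big),
\]
which involves only the single infeasibility $\vA\vx^{(k)}+\vb$. That term is handled exactly as you describe (via $\vA\overline{\vx}^{(k)}+\vb=\vzero$ and the $\vx$-bound from \eqref{eq:lemmaineq1}), contributing $\|(\vA\vA\zz)^{-1}\vA\|\,\|[\overline{\vA};\vA]\|\mu_k$; the remaining terms give $B_3^k$. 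Then the single triangle inequality $\|\vz_2^{(k+1)}\|\le\|\vz_2^{(k+1)}-\overline{\vz}_2^{(k+1)}\|+\|\overline{\vz}_2^{(k+1)}\|\le\mu_k+\|\overline{\vz}_2^{(k+1)}\|$ supplies the ``$1$'' in $B_2$ and delivers exactly the stated constant. The paper's detour through $\overline{\vz}_2^{(k+1)}$ thus buys you one fewer infeasibility term and the precise constant; your direct route is conceptually fine but would require adjusting $B_2$.
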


\begin{proof}
	By~\eqref{eq:boundz-2}, we have $\|\vz^{(k+1)}-\overline{\vz}^{(k+1)}\|\leq\mu_k$. 
	Then we obtain from~\eqref{eq:xupdate-2} and~\eqref{eq:barx-2} that
	$$
	\|\overline{\vx}^{(k+1)}-{\vx}^{(k+1)}\|\leq\frac{1}{\tau}\left\|[\overline{\vA}; \vA]\right\|\|\vz^{(k+1)}-\overline{\vz}^{(k+1)}\|\leq  \frac{1}{\tau}\left\|[\overline{\vA}; \vA]\right\|\mu_k,
	$$
	and from~\eqref{eq:boundz-2},~\eqref{eq:yupdate-2} and~\eqref{eq:bary-2} that
	$$
	\|\overline{\vy}^{(k+1)}-{\vy}^{(k+1)}\|\leq \|\overline{\vA}\|\|\overline{\vx}^{(k+1)}-{\vx}^{(k+1)}\|+\sigma^{-1}\|\vz^{(k+1)}-\overline{\vz}^{(k+1)}\|\leq \frac{1}{\tau}\left\|\overline{\vA}\right\|\left\|[\overline{\vA}; \vA]\right\|\mu_k +\sigma^{-1} \mu_k.
	$$
	Hence, the two inequalities in~\eqref{eq:lemmaineq1} hold. 
	
	In addition, by~\eqref{eq:feasiblexy-2}, we have 
	\begin{equation*}	
		\begin{aligned}
			&\|{\vy}^{(k+1)}-
			(\overline{\vA}{\vx}^{(k+1)} +	
			\overline{\vb})\|+ \|{\vA}{\vx}^{(k+1)} +
			{\vb}\|  \\
			= & \left\|{\vy}^{(k+1)}-
			(\overline{\vA}{\vx}^{(k+1)} +
			\overline{\vb})-\overline{\vy}^{(k+1)}+
			(\overline{\vA}\overline{\vx}^{(k+1)} +
			\overline{\vb})\right\| +  \left\|({\vA}\overline{\vx}^{(k+1)} +
			{\vb})-({\vA}{\vx}^{(k+1)} +{\vb})\right\|\\
			\leq  &
			\left\|{\vy}^{(k+1)}-\overline{\vy}^{(k+1)}\right\| + \left\| \overline{\vA}{\vx}^{(k+1)}-   \overline{\vA}\overline{\vx}^{(k+1)} \right\| +\left\| {\vA}{\vx}^{(k+1)}-   {\vA}\overline{\vx}^{(k+1)} \right\| \\
			\leq  &
			\left\|{\vy}^{(k+1)}-\overline{\vy}^{(k+1)}\right\| + \left(\|\overline{\vA}\|+\|\vA\|\right)\left\| {\vx}^{(k+1)} -  \overline{\vx}^{(k+1)} \right\|
			\leq
			\mu_k {B_1},
		\end{aligned}
	\end{equation*}
	where the last inequality is from~\eqref{eq:lemmaineq1} and the definition of $B_1$. Hence, the claim in~\eqref{eq:lemmaineq2} holds.
	
	Moreover, recall $\vz_1^{(k+1)}\in \dom(g^{\star})$ in~\eqref{eq:boundz-2}. Since $g$ is $l_g$-Lipschitz continuous by Assumption~\ref{assume:problemsetup}, we must have $\|\vz_1^{(k+1)}\|\leq l_g$~\cite{rockafellar1970convex}. 
	Write $\overline{\vz}^{(k+1)}=((\overline{\vz}_1^{(k+1)})\zz,(\overline{\vz}_2^{(k+1)})\zz)\zz$. Then $\overline\vz_1^{(k+1)}\in \dom(g^{\star})$. Thus for the same reason, 
	we have $\|\overline{\vz}_1^{(k+1)}\|\leq l_g$. By  Assumption~\ref{assume:problemsetup}, $\vA \vA\zz$ is non-singular, so the optimality condition of~\eqref{eq:sublb-2}  implies 
	\begin{equation}\label{eq:formula-z-2-k+1}
		\overline{\vz}_2^{(k+1)}=-\left( \vA \vA\zz\right)^{-1} \left(\vA\overline{\vA}\zz\overline{\vz}_1^{(k+1)}+\vA\nabla f_0(\vx^{(k)})-\tau\vA\vx^{(k)} -\tau \vb\right).
	\end{equation}
	For $k\geq1$, we obtain from the second inequality in~\eqref{eq:feasiblexy-2} and the first inequality in~\eqref{eq:lemmaineq1} that 
	\begin{align*}
		\|\left( \vA \vA\zz\right)^{-1} (\vA\vx^{(k)} +\vb)\|=&\|\left( \vA \vA\zz\right)^{-1}(\vA\vx^{(k)} + \vb-\vA\overline\vx^{(k)} -\vb)\|\\
		\leq& \|( \vA \vA\zz)^{-1}\vA\|\|\vx^{(k)}-\overline\vx^{(k)}\|\leq \frac{1}{\tau}\|( \vA \vA\zz)^{-1}\vA\|\left\|[\overline{\vA}; \vA]\right\|\mu_k.
	\end{align*}
	Since $\vA\vx^{(0)} +\vb=\mathbf{0}$, the inequality above also holds for $k=0$. 
	Hence, 
	\begin{align*}
		\|\vz_2^{(k+1)}\|\leq&\|\vz_2^{(k+1)}-\overline{\vz}_2^{(k+1)}\|+\|\overline{\vz}_2^{(k+1)}\| \leq\mu_k+\left\|\left( \vA \vA\zz\right)^{-1} \left(\vA\overline{\vA}\zz\overline{\vz}_1^{(k+1)}+\vA\nabla f_0(\vx^{(k)})-\tau\vA\vx^{(k)} -\tau \vb\right)\right\|\\
		\leq &\mu_k+\|( \vA \vA\zz)^{-1}\vA\| (l_f^k+ \|\overline{\vA}\|l_g)+\|( \vA \vA\zz)^{-1}\vA\| \left\|[\overline{\vA}; \vA]\right\|\mu_k\leq B^k_3 +B_2\mu_k.
	\end{align*} 
	Thus both inequalities in~\eqref{eq:lemmaineq3} hold.
	
	Finally, it follows from the updating rule in~\eqref{eq:xupdate-2} and the inequalities in~\eqref{eq:lemmaineq3} that 
	\begin{align*}&\|\vx^{(k+1)}-\vx^{(k)}\| =\frac{1}{\tau}\left\|  [\overline{\vA}; \vA]\zz \vz^{(k+1)}+\nabla f_0(\vx^{(k)})\right\| \\
		\leq & \frac{1}{\tau} \left(\left\|[\overline{\vA}; \vA]\right\|\left\| \vz_1^{(k+1)}\right\|+\left\|[\overline{\vA}; \vA]\right\|\left\|\vz_2^{(k+1)}\right\|+\left\|\nabla f_0(\vx^{(k)})\right\|\right)\\
		\leq &\frac{1}{\tau}l_f^k+ \frac{1}{\tau} \left\|[\overline{\vA}; \vA]\right\|l_g+ \frac{1}{\tau} \left\|[\overline{\vA}; \vA]\right\| (B^k_3 +B_2\mu_k).  
	\end{align*}
	Hence by the definition of $B_4^k$,~\eqref{eq:lemmaineq4} is obtained, and we complete the proof.			
\end{proof}	

With Lemmas~\ref{lem:barxyz-2} and~\ref{lem:boundxyz-2}, we can characterize the number of outer iterations that Algorithm~\ref{alg:ipganormal} needs to find an $\varepsilon$-KKT point of problem~\eqref{eq:model-spli}. We will show that $\{\mu_k\}_{k\ge 0}$ is well defined in the subsequent remark. 

\begin{theorem}
	\label{thm:allcomple}
	Suppose that Assumption~\ref{assume:problemsetup} holds. Given $0<\varepsilon <1$,  let $\tau=2L_f$ and $ \{\mu_k\}_{k\ge 0} $ satisfy
	\begin{align}
		\label{eq:delta}
		\mu_k \leq \widetilde{\mu}_k:= \min\left\{\frac{\varepsilon}{B_1\sigma},\ \frac{\varepsilon}{B_1},\ \frac{\varepsilon^2}{12L_fB_1\left(B^k_3 +\sigma \|\overline{\vA}\|B^k_4+l_g\right)},\ \sqrt{\frac{\varepsilon^2}{12L_fB_1B_2\left(1+\frac{1}{\tau} \sigma \|\overline{\vA}\|\left\|[\overline{\vA}; \vA]\right\|\right)}} \right\},
	\end{align}
	where   $B_1,B_2,B_3^k$ and $B^k_4$ are given in Lemma~\ref{lem:boundxyz-2}.  Define 
	\begin{equation}\label{eq:set-K-eps}
		K_\varepsilon:= \left \lceil    12L_f \Delta_F\varepsilon^{-2}   \right\rceil, 
	\end{equation}	
	where $ \Delta_{F}=F(\vx^{(0)}, \vy^{(0)})-\inf_{\vx,\vy} F(\vx,\vy)$. Let  $\{\vx^{(k)},\vy^{(k)}\}_{k\ge 0}$ be generated by  Algorithm~\ref{alg:ipganormal}, and $$k'=\argmin_{k=0,1,\dots,K_\varepsilon-1}\left\|\vx^{(k+1)}-{\vx}^{(k)}\right\|.$$ Then $(\vx^{(k'+1)},\vy^{(k'+1)})$ is an  $\varepsilon$-KKT point  of problem~\eqref{eq:model-spli}.
\end{theorem}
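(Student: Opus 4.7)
The plan is to certify the four $\varepsilon$-KKT conditions of problem~\eqref{eq:model-spli} at $(\vx^{(k'+1)}, \vy^{(k'+1)})$ using $\vgamma_1 := \vz_1^{(k'+1)}$ and $\vgamma_2 := \vz_2^{(k'+1)}$ as dual certificates. Three of them follow directly from Lemma~\ref{lem:boundxyz-2}. The primal infeasibilities $\|\vy^{(k'+1)} - \overline{\vA}\vx^{(k'+1)} - \overline{\vb}\|$ and $\|\vA\vx^{(k'+1)} + \vb\|$ are both controlled by $B_1 \mu_{k'} \le B_1 \widetilde{\mu}_{k'} \le \varepsilon$ via~\eqref{eq:lemmaineq2} and the first entry in the $\min$ defining~\eqref{eq:delta}. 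The $\vy$-stationarity follows from writing the optimality condition of the proximal step~\eqref{eq:yupdate-2}: it yields $\vz_1^{(k'+1)} - \sigma(\vy^{(k'+1)} - \overline{\vA}\vx^{(k'+1)} - \overline{\vb}) \in \partial g(\vy^{(k'+1)})$, so $\dist(\vzero, \partial g(\vy^{(k'+1)}) - \vz_1^{(k'+1)}) \le \sigma B_1 \mu_{k'} \le \varepsilon$ by the second entry of~\eqref{eq:delta}. For the $\vx$-stationarity, rearranging~\eqref{eq:xupdate-2} gives $\overline{\vA}\zz\vz_1^{(k'+1)} + \vA\zz\vz_2^{(k'+1)} = -\nabla f_0(\vx^{(k')}) - \tau(\vx^{(k'+1)} - \vx^{(k')})$, so adding $\nabla f_0(\vx^{(k'+1)})$ and using $L_f$-smoothness together with $\tau = 2L_f$ yields
\begin{equation*}
\|\nabla f_0(\vx^{(k'+1)}) + \overline{\vA}\zz\vz_1^{(k'+1)} + \vA\zz\vz_2^{(k'+1)}\| \le (L_f + \tau)\|\vx^{(k'+1)} - \vx^{(k')}\| = 3L_f \|\vx^{(k'+1)} - \vx^{(k')}\|.
\end{equation*}
Hence it suffices to show $\min_{0 \le k \le K_\varepsilon - 1}\|\vx^{(k+1)} - \vx^{(k)}\|^2 \le \varepsilon^2/(9L_f^2)$.

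To establish this bound, I would derive a one-step descent inequality on $F_0$. Since $(\overline{\vx}^{(k+1)}, \overline{\vy}^{(k+1)})$ from Lemma~\ref{lem:barxyz-2} is the exact optimum of the strongly convex subproblem~\eqref{eq:subpga-2}, I compare its value against the feasible surrogate $(\vx^{(k)}_*, \overline{\vA}\vx^{(k)}_* + \overline{\vb})$, where $\vx^{(k)}_*$ is the projection of $\vx^{(k)}$ onto $\{\vA\vx + \vb = \vzero\}$; combining with $L_f$-smoothness of $\nabla f_0$ and $\tau = 2L_f$, this gives
\begin{equation*}
F_0(\overline{\vx}^{(k+1)}) \le F_0(\vx^{(k)}) - \tfrac{L_f}{2}\|\overline{\vx}^{(k+1)} - \vx^{(k)}\|^2 + e_k^{(1)},
\end{equation*}
where $e_k^{(1)}$ is linear-plus-quadratic in $\|\vx^{(k)}_* - \vx^{(k)}\| \le \|\vA\zz(\vA\vA\zz)^{-1}\|\,\|\vA\vx^{(k)} + \vb\|$; this last quantity vanishes for $k=0$ by initial feasibility and is bounded by $B_1\mu_{k-1}$ for $k \ge 1$ via~\eqref{eq:lemmaineq2}, with the constants controlled by $l_g$, $l_f^k$ and the dual bounds~\eqref{eq:lemmaineq3}. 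Transferring to $\vx^{(k+1)}$ via~\eqref{eq:lemmaineq1} and Lipschitz continuity of $f_0$ and $g$ gives $F_0(\vx^{(k+1)}) \le F_0(\overline{\vx}^{(k+1)}) + e_k^{(2)}$ with $e_k^{(2)} = O(\mu_k)$.

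Summing over $k = 0, \ldots, K_\varepsilon - 1$ telescopes the $F_0$ difference to $-\Delta_F$, and combining with $\|\vx^{(k+1)} - \vx^{(k)}\|^2 \le 2\|\overline{\vx}^{(k+1)} - \vx^{(k)}\|^2 + 2\|\vx^{(k+1)} - \overline{\vx}^{(k+1)}\|^2$ yields
\begin{equation*}
\tfrac{L_f}{4}\sum_{k=0}^{K_\varepsilon - 1} \|\vx^{(k+1)} - \vx^{(k)}\|^2 \le \Delta_F + \tfrac{L_f}{2}\sum_{k=0}^{K_\varepsilon - 1}\|\vx^{(k+1)} - \overline{\vx}^{(k+1)}\|^2 + \sum_{k=0}^{K_\varepsilon - 1}(e_k^{(1)} + e_k^{(2)}).
\end{equation*}
The four entries of the $\min$ in~\eqref{eq:delta} are precisely calibrated so that the per-iteration contribution of each of $\|\vx^{(k+1)} - \overline{\vx}^{(k+1)}\|^2$, $e_k^{(1)}$, and $e_k^{(2)}$ is at most $\varepsilon^2/12$ times a constant absorbed into the denominator. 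Consequently the entire right-hand side is $\le 2\Delta_F$, and dividing by $K_\varepsilon \ge 12 L_f \Delta_F \varepsilon^{-2}$ gives $\min_k \|\vx^{(k+1)} - \vx^{(k)}\|^2 \le \varepsilon^2/(9L_f^2)$, finishing the proof.

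\textbf{Main obstacle.} The technical heart is the descent inequality in the presence of \emph{approximately} feasible iterates $\vx^{(k)}$: the standard proximal-gradient comparison must be anchored at a truly feasible surrogate $\vx^{(k)}_*$, and the bookkeeping that matches each source of error (prox inexactness in $\vz^{(k+1)}$, Lipschitz transfer from $\overline{\vx}^{(k+1)}$ to $\vx^{(k+1)}$, cross-terms involving $\|[\overline{\vA};\vA]\|$ and $\sigma$) to exactly one of the four terms in~\eqref{eq:delta} is the delicate part, rather than any new idea.
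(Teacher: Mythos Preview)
Your reduction to the four KKT conditions and the bound $\|\nabla f_0(\vx^{(k+1)}) + [\overline{\vA};\vA]\zz\vz^{(k+1)}\| \le 3L_f\|\vx^{(k+1)}-\vx^{(k)}\|$ are exactly what the paper does. The divergence is in the descent argument, and there your route does not deliver the stated theorem.

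The paper does \emph{not} go through $F_0$ and the exact minimizer $\overline{\vx}^{(k+1)}$. It works directly with $F(\vx,\vy)=f_0(\vx)+g(\vy)$ on the actual iterates. From the smoothness inequality for $f_0$, it substitutes $\nabla f_0(\vx^{(k)}) = -\tau(\vx^{(k+1)}-\vx^{(k)}) - \overline{\vA}\zz\vz_1^{(k+1)} - \vA\zz\vz_2^{(k+1)}$ (the $\vx$-update), producing a term $\frac{L_f-2\tau}{2}\|\vx^{(k+1)}-\vx^{(k)}\|^2 = -\frac{3L_f}{2}\|\vx^{(k+1)}-\vx^{(k)}\|^2$ plus a cross term $-\langle\overline{\vA}\zz\vxi^{(k+1)},\vx^{(k+1)}-\vx^{(k)}\rangle$ (after replacing $\vz_1$ by $\vxi\in\partial g(\vy^{(k+1)})$). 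Separately, convexity of $g$ gives $g(\vy^{(k+1)})-g(\vy^{(k)}) \le \langle\vxi^{(k+1)},\vy^{(k+1)}-\vy^{(k)}\rangle$, and since $\vy^{(k+1)}-\vy^{(k)}$ equals $\overline{\vA}(\vx^{(k+1)}-\vx^{(k)})$ up to a term of size $2B_1\mu_k$, the cross terms \emph{cancel}. The residual errors are exactly $2\mu_k B_1\|\vz_2^{(k+1)}\|$, $2\mu_k B_1\sigma\|\overline{\vA}\|\|\vx^{(k+1)}-\vx^{(k)}\|$, and $2\mu_k B_1 l_g$, and plugging in~\eqref{eq:lemmaineq3}--\eqref{eq:lemmaineq4} is what produces precisely the third and fourth entries of $\widetilde{\mu}_k$.

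Your approach has two concrete problems. First, the coefficient: your comparison at the exact minimizer yields only $-\frac{L_f}{2}\|\overline{\vx}^{(k+1)}-\vx^{(k)}\|^2$, and after $\|\vx^{(k+1)}-\vx^{(k)}\|^2 \le 2\|\overline{\vx}^{(k+1)}-\vx^{(k)}\|^2 + 2\|\cdot\|^2$ you are left with $-\frac{L_f}{4}\|\vx^{(k+1)}-\vx^{(k)}\|^2$, a factor of $6$ weaker than the paper's $-\frac{3L_f}{2}$. Your final arithmetic does not close: $\frac{L_f}{4}\min_k\|\cdot\|^2 \le \frac{2\Delta_F}{K_\varepsilon} \le \frac{\varepsilon^2}{6L_f}$ gives $3L_f\min_k\|\cdot\| \le \sqrt{6}\,\varepsilon$, not $\varepsilon$. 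Second, the claim that the four entries of $\widetilde{\mu}_k$ are ``precisely calibrated'' to your errors is not right. Your $e_k^{(1)}$ depends on $\mu_{k-1}$ (via $\|\vA\vx^{(k)}+\vb\|$) multiplied by $l_f^k$, whereas $\widetilde\mu_{k-1}$ carries $B_3^{k-1}$ built from $l_f^{k-1}$; and your descent on $F_0$ produces no $\sigma$-dependence at all, yet $\sigma$ appears in both the third and fourth entries of $\widetilde{\mu}_k$ because those entries are tailored to the paper's error sources ($\|\vz_1^{(k+1)}-\vxi^{(k+1)}\|\le \sigma B_1\mu_k$ entering the cancellation). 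Your route would prove a theorem of the same order, but with a different $K_\varepsilon$ and a different $\widetilde{\mu}_k$; it does not prove the theorem as stated.
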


\begin{proof} 
	We obtain from inequality~\eqref{eq:lemmaineq2} that, for any $k\ge 0$,  
	\begin{equation}
		\label{eq:feasibility}
		\|{\vy}^{(k+1)}-
		(\overline{\vA}{\vx}^{(k+1)} +	
		\overline{\vb})\|+ \|{\vA}{\vx}^{(k+1)} +
		{\vb}\| 
		\leq\mu_k B_1 \leq    \varepsilon.
	\end{equation}
	In addition, by 
	\eqref{eq:yupdate-2}, 
	it holds for any $k\ge 0$ that
	\begin{equation}
		\label{eq:kktvy-2}
		\mathbf{0}\in \partial g(\vy^{(k+1)}) - \vz_1^{(k+1)} +\sigma \left({\vy}^{(k+1)}-
		(\overline{\vA}{{\vx}^{(k+1)}} +
		\overline{\vb})\right).
	\end{equation}	
	Hence, for all $k\ge 0$, there exists $\vxi^{(k+1)}\in\partial g(\vy^{(k+1)})$ such that 
	\begin{equation}
		\label{eq:kktvy-2vio}
		\| \vxi^{(k+1)} - \vz_1^{(k+1)}\| = \sigma \|{\vy}^{(k+1)}-
		(\overline{\vA}{{\vx}^{(k+1)}} +
		\overline{\vb})\| \overset{\eqref{eq:lemmaineq2}} \leq  \mu_k B_1\sigma  \overset{\eqref{eq:delta}}\leq  \varepsilon.
	\end{equation} 
	Moreover, it holds by~\eqref{eq:xupdate-2} that for any $k\ge 0$, 
	\begin{equation}
		\label{eq:kktvxvio-2}
		\mathbf{0}= \nabla f_0(\vx^{(k)}) + [
		\overline{\vA} ;
		\mathbf{A}]\zz \vz^{(k+1)} +\tau (\vx^{(k+1)}-\vx^{(k)}).
	\end{equation}
	Hence, by the $L_f$-Lipschitz continuity of $\nabla f_0$ and the fact that $\tau=2L_f$, we have for any $k\ge0$,
	\begin{align}\label{eq:xerror-2}
		\left\|\nabla f_0(\vx^{(k+1)}) + [
		\overline{\vA} ;
		\mathbf{A}]\zz \vz^{(k+1)}\right\|
		\leq &\left\|\nabla f_0(\vx^{(k)}) + [
		\overline{\vA} ;
		\mathbf{A}]\zz \vz^{(k+1)}\right\|+L_f\|\vx^{(k+1)}-\vx^{(k)} \| \nonumber\\
		=& (\tau+L_f)\|\vx^{(k+1)}-\vx^{(k)} \|
		=3L_f\|\vx^{(k+1)}-\vx^{(k)} \|.
	\end{align}
	Below we bound the average of the square of the right-hand side of~\eqref{eq:xerror-2} over $K$ terms.
	First, by~\eqref{eq:lemmaineq2} and the feasibility of $({\vx}^{(0)}, {\vy}^{(0)})$, i.e, ${\vA}{\vx}^{(0)} +{\vb} = \vzero$ and ${\vy}^{(0)} = \overline{\vA}{\vx}^{(0)} +\overline{\vb}$, we have for any $k\ge0$,
	\begin{align}
		\label{eq:lemmaineq21}
		\left\|{\vy}^{(k+1)}-{\vy}^{(k)}-
		\overline{\vA}({\vx}^{(k+1)}-{\vx}^{(k)})\right\|
		=\left\|{\vy}^{(k+1)}-
		(\overline{\vA}{\vx}^{(k+1)} +
		\overline{\vb})-{\vy}^{(k)}+
		(\overline{\vA}{\vx}^{(k)} +
		\overline{\vb})\right\|\leq &2\mu_k B_1,\\	\label{eq:lemmaineq22}
		\left\|\vA({\vx}^{(k+1)}-{\vx}^{(k)})\right\| = \left\|({\vA}{\vx}^{(k)} +
		{\vb})-({\vA}{\vx}^{(k+1)} +{\vb})\right\|\leq &2\mu_k B_1	.		
	\end{align}
	Second, it follows from the $L_f$-Lipschitz continuity of $\nabla f_0$ that 
	\begin{align}
		\nonumber
		&f_0(\vx^{(k+1)})-f_0(\vx^{(k)})
		\leq \left\langle \nabla f_0(\vx^{(k)}), \vx^{(k+1)}-\vx^{(k)}\right\rangle+\frac{L_f}{2}\left\|\vx^{(k+1)}-{\vx}^{(k)}\right\|^2
		\\	\nonumber
		\overset{\eqref{eq:kktvxvio-2}}= &\left\langle -{\tau}\left(\vx^{(k+1)}-{\vx}^{(k)}\right) -\overline{\vA}\zz \vz_1^{(k+1)}-\vA\zz \vz_2^{(k+1)}, \vx^{(k+1)}-\vx^{(k)}\right\rangle +\frac{L_f}{2}\left\|\vx^{(k+1)}-{\vx}^{(k)}\right\|^2
		\\	\nonumber
		= &\frac{L_f-2\tau}{2}\left\|\vx^{(k+1)}-{\vx}^{(k)}\right\|^2-\left\langle \overline{\vA}\zz \vxi^{(k+1)}, \vx^{(k+1)}-\vx^{(k)}\right\rangle-\left\langle \vz_2^{(k+1)}, \vA(\vx^{(k+1)}-\vx^{(k)})		
		\right\rangle\\	\nonumber
		&~-\left\langle \overline{\vA}\zz (\vz_1^{(k+1)}-\vxi^{(k+1)}), \vx^{(k+1)}-\vx^{(k)}\right\rangle\\	\nonumber
		\leq &-\frac{3L_f}{2}\left\|\vx^{(k+1)}-{\vx}^{(k)}\right\|^2-\left\langle \overline{\vA}\zz \vxi^{(k+1)}, \vx^{(k+1)}-\vx^{(k)}\right\rangle+2\mu_k B_1\| \vz_2^{(k+1)}\|+2\mu_k B_1\sigma \|\overline{\vA}\|\|\vx^{(k+1)}-{\vx}^{(k)}\|\\	\nonumber
		\leq &-\frac{3L_f}{2}\left\|\vx^{(k+1)}-{\vx}^{(k)}\right\|^2-\left\langle \overline{\vA}\zz \vxi^{(k+1)}, \vx^{(k+1)}-\vx^{(k)}\right\rangle+2\mu_k B_1\left(B^k_3 +B_2\mu_k\right)\\\label{eq:proofstationarity1}
		&~+2\mu_k B_1\sigma \|\overline{\vA}\|\left(B^k_4 +  \frac{1}{\tau} \left\|[\overline{\vA}; \vA]\right\|B_2\mu_k\right),
	\end{align}
	where 
	the second inequality holds from $\tau=2L_f$,~\eqref{eq:kktvy-2vio} and~\eqref{eq:lemmaineq22}, and the last inequality follows from~\eqref{eq:lemmaineq3} and~\eqref{eq:lemmaineq4}.
	Third, by the convexity of $g$ and the fact that $\vxi^{(k+1)}\in\partial g(\vy^{(k+1)})$, we have 
	\begin{align}
		\nonumber
		& g(\vy^{(k+1)})- g(\vy^{(k)})
		\leq \left\langle \vxi^{(k+1)}, \vy^{(k+1)}-\vy^{(k)} \right\rangle 
		\\	\nonumber
		= &	 \left\langle \vxi^{(k+1)}, 	\overline{\vA}({\vx}^{(k+1)}-{\vx}^{(k)})  \right\rangle + \left\langle \vxi^{(k+1)}, {\vy}^{(k+1)}-{\vy}^{(k)}-
		\overline{\vA}({\vx}^{(k+1)}-{\vx}^{(k)}) \right\rangle 
		\\	\nonumber
		\leq & \left\langle \vxi^{(k+1)}, 	\overline{\vA}({\vx}^{(k+1)}-{\vx}^{(k)})  \right\rangle +2\mu_k B_1\|\vxi^{(k+1)}\|\\\label{eq:proofstationarity2}
		\leq &\left\langle \vxi^{(k+1)}, 	\overline{\vA}({\vx}^{(k+1)}-{\vx}^{(k)})  \right\rangle +2\mu_k B_1l_g,
	\end{align}
	where the second inequality is from~\eqref{eq:lemmaineq21}, and the last one holds by $\|\vxi^{(k+1)}\|\le l_g$ from the $l_g$-Lipschitz continuity of $g$. 
	Adding~\eqref{eq:proofstationarity1} and~\eqref{eq:proofstationarity2} and combining terms give
	\begin{align}\label{eq:dec-obj}
		\nonumber
		&f_0(\vx^{(k+1)}) +g(\vy^{(k+1)})-f_0(\vx^{(k)})-g(\vy^{(k)})\\
		\leq &-\frac{3L_f}{2}\left\|\vx^{(k+1)}-{\vx}^{(k)}\right\|^2+2\mu_k B_1\left(B^k_3 +\sigma \|\overline{\vA}\|B^k_4+l_g\right)
		+2\mu_k^2 B_1B_2\left(1+\frac{1}{\tau} \sigma \|\overline{\vA}\|\left\|[\overline{\vA}; \vA]\right\|\right).
	\end{align}
	Multiplying $6L_f$ to the inequality above and summing it over $k=0,1,\ldots,K-1$, we  obtain that
	\begin{equation}\label{eq:stationarybound}
		\begin{aligned}
			&\frac{(3L_f)^2}{K} \sum_{k=0}^{K-1} \left\|\vx^{(k+1)}-{\vx}^{(k)}\right\|^2 \leq   \frac{6L_f\left(F(\vx^{(0)}, \vy^{(0)})-F(\vx^{(K)}, \vy^{(K)})\right)}{K }\\
			& +  \frac{1}{K}\sum_{k=0}^{K-1}12L_f\mu_k B_1\left(B^k_3 +\sigma \|\overline{\vA}\|B^k_4+l_g\right)	
			+ \frac{1}{K}\sum_{k=0}^{K-1} 12L_f\mu_k^2 B_1B_2\left(1+\frac{1}{\tau} \sigma \|\overline{\vA}\|\left\|[\overline{\vA}; \vA]\right\|\right).
		\end{aligned}	
	\end{equation}
	Let $K=K_\varepsilon$ in~\eqref{eq:stationarybound}. Since  $K_\varepsilon\geq 12L_f\left(F(\vx^{(0)}, \vy^{(0)})-\inf_{\vx,\vy}F(\vx, \vy)\right)\varepsilon^{-2}$, it holds that 
	\begin{equation}
		\label{eq:Kep}
		\frac{6L_f\left(F(\vx^{(0)}, \vy^{(0)})-F(\vx^{(K)}, \vy^{(K)})\right)}{K_\varepsilon}\leq \frac{1}{2}\varepsilon^2.
	\end{equation}
	Moreover, the choice of $\mu_k $ in~\eqref{eq:delta} ensures			
	\begin{equation}
		\label{eq:deltaep}
		\begin{aligned}
		 & \frac{1}{K}\sum_{k=0}^{K-1} 12L_f\mu_k B_1\left(B^k_3 +\sigma \|\overline{\vA}\|B^k_4+l_g\right)\leq  
		    \frac{1}{4}\varepsilon^2, \text{ and }
		 \\
		 & \frac{1}{K}\sum_{k=0}^{K-1} 12L_f\mu_k^2 B_1B_2\left(1+\frac{1}{\tau} \sigma \|\overline{\vA}\|\left\|[\overline{\vA}; \vA]\right\|\right) 
		 \leq  \frac{1}{4}\varepsilon^2.
		 \end{aligned}
	\end{equation}			
	Applying the bounds in~\eqref{eq:Kep}  to~\eqref{eq:stationarybound} with $K=K_\varepsilon$ gives 
	\begin{align}
		\label{eq:Kepsilon1}
		\frac{(3L_f)^2}{K_\varepsilon}\sum_{k=0}^{K_\varepsilon-1} \left\|\vx^{(k+1)}-{\vx}^{(k)}\right\|^2 \leq \varepsilon^2.
	\end{align}
	Hence, it follows that ${3L_f}\min_{k=0,\dots,K_\varepsilon-1}\left\|\vx^{(k+1)}-{\vx}^{(k)}\right\| \leq \varepsilon$. Then it results from the definition of $k'$ and~\eqref{eq:xerror-2} that 
	\begin{equation}
		\label{eq:kktviosq-1}
		\begin{aligned}
			\left\|\nabla f_0(\vx^{(k'+1)}) + [
			\overline{\vA} ;
			\mathbf{A}]\zz \vz^{(k'+1)}\right\| 
			\leq  {3L_f}\left\|\vx^{(k'+1)}-{\vx}^{(k')}\right\| = {3L_f}\min_{k=0,\dots,K_\varepsilon-1}\left\|\vx^{(k+1)}-{\vx}^{(k)}\right\| \leq   
			\varepsilon,
		\end{aligned}
	\end{equation}
	which together with~\eqref{eq:feasibility} and~\eqref{eq:kktvy-2vio} 
	shows that $(\vx^{(k'+1)},\vy^{(k'+1)})$ is an  $\varepsilon$-KKT point  of problem~\eqref{eq:model-spli} by Definition~\ref{def:eps-pt-P}. This completes the proof.
\end{proof}

\begin{remark}[Boundedness of the sequence]
	\label{rem:1}
	We observe that, for any $0 \le k\leq K_\varepsilon $, it holds that
	\begin{align}
		\notag
		\|\vx^{(k)}\| \le &\|\vx^{(0)}\| + \sum_{s=0}^{k-1}\|\vx^{(s)}-\vx^{(s+1)}\|\leq \|\vx^{(0)}\| + \sum_{s=0}^{K_\varepsilon-1}\|\vx^{(s)}-\vx^{(s+1)}\|\\\label{eq:xkx01}
		\leq &\|\vx^{(0)}\| + \sqrt{K_\varepsilon}\cdot \sqrt{\sum_{s=0}^{K_\varepsilon-1}\|\vx^{(s)}-\vx^{(s+1)}\|^2} \leq \|\vx^{(0)}\| + \sqrt{K_\varepsilon}\cdot \sqrt{\frac{K_\varepsilon\cdot \varepsilon^2}{9L_f^2}} = \|\vx^{(0)}\| +  {\mathcal O}(\varepsilon^{-1}), 
	\end{align}
	where the third inequality is by the Cauchy–Schwarz inequality, the fourth one is by~\eqref{eq:Kepsilon1}, and the last equality follows from~\eqref{eq:set-K-eps}.
	Consequently, we establish an upper bound for the gradient norm:
	\begin{align}
		\notag
		 l_f^k=\|\nabla f_0(\vx^{(k)})\| \leq &\|\nabla f_0(\vx^{(0)})\| + \|\nabla f_0(\vx^{(0)}) - \nabla f_0(\vx^{(k)})\|\leq \|\nabla f_0(\vx^{(0)})\| + L_f \|\vx^{(0)}- \vx^{(k)}\|  \\ \le&  \|\nabla f_0(\vx^{(0)})\| + L_f \sqrt{K_\varepsilon}\cdot \sqrt{\frac{K_\varepsilon\cdot \varepsilon^2}{9L_f^2}} =\|\nabla f_0(\vx^{(0)})\| + {\mathcal O}(\varepsilon^{-1}) .
		\label{eq:xkx02} 
	\end{align}
	Thus, the quantities $B_3^k$ and $B_4^k$ defined in Lemma \ref{lem:boundxyz-2} are at most of order $ {\mathcal O}(\varepsilon^{-1})$, and are well-defined.  $\{\mu_k\}_{k\ge 0}$ given in~\eqref{eq:delta} can be well controlled. 
\end{remark}

Theorem~\ref{thm:allcomple}   only shows that  Algorithm~\ref{alg:ipganormal} can produce an $\varepsilon$-KKT point  of problem~\eqref{eq:model-spli}. In fact, with a~$\mu_k$ that is slightly smaller that in Algorithm~\ref{alg:ipganormal} at each iteration, we can also show that Algorithm~\ref{alg:ipganormal} can produce a $(\delta, \varepsilon)$-KKT point  of~\eqref{eq:model} with an outer-iteration number similar to that in Theorem~\ref{thm:allcomple}. This result is presented in the following theorem.

\begin{theorem}
	\label{thm:allcompleforp}
	Suppose that {Assumption~\ref{assume:problemsetup} holds}.  Given $0<\varepsilon <1$, in Algorithm~\ref{alg:ipganormal}, let $\tau=2L_f$ and $\{\mu_k\}_{k\ge 0}$ satisfy
\begin{equation}
	\label{eq:delta2}
	\begin{aligned}
		\mu_k\leq \overline{\mu}_k:=\min\Bigg\{&\frac{\vareps^2}{480B_1 l_g L_f}, \frac{\tau\delta}{\|[\overline{\vA}; \vA]\|}, \frac{\varepsilon}{6\|[\overline{\vA}; \vA]\|}, \\ &\frac{\varepsilon}{B_1\sigma},\ \frac{\varepsilon}{B_1},\ \frac{\varepsilon^2}{12L_fB_1\left(B^k_3 +\sigma \|\overline{\vA}\|B^k_4+l_g\right)},\ \sqrt{\frac{\varepsilon^2}{12L_fB_1B_2\left(1+\frac{1}{\tau} \sigma \|\overline{\vA}\|\left\|[\overline{\vA}; \vA]\right\|\right)}} \Bigg\}, 
		\end{aligned}
	\end{equation}where $\delta>0$, and $B_1,B_2,B_3^k$ and $B^k_4$ are given in Lemma~\ref{lem:boundxyz-2}.  Let 
	\begin{equation}\label{eq:set-K-eps2}
		\overline K_\varepsilon:= \left \lceil  120 L_f \Delta_{F_0}\varepsilon^{-2} \right\rceil 
	\end{equation}	
	with $ \Delta_{F_0}=F_0(\vx^{(0)})-\inf_{\vx} F_0(\vx)$,  $\{\vx^{(k)},\vy^{(k)}, \vz^{(k)}\}_{k\ge 0}$ be generated from  Algorithm~\ref{alg:ipganormal},  $(\overline{\vx}^{(k+1)}, \overline{\vy}^{(k+1)})$ be defined as in Lemma~\ref{lem:barxyz-2} with $\overline{\vz}^{(k+1)} = \mathbf{proj}_{\Omega^{(k+1)}}(\vz^{(k+1)})$. Let $k'=\underset{{k=0,\dots,\overline{K}_\varepsilon-1}}\argmin\left\|\vx^{(k+1)}-{\vx}^{(k)}\right\|$. Then $(\vx^{(k'+1)},\vy^{(k'+1)})$ is an  $\varepsilon$-KKT point  of problem~\eqref{eq:model-spli},  $\overline{\vx}^{(k'+1)}$ is an $\varepsilon$-KKT point  of problem~\eqref{eq:model}, and $\vx^{(k'+1)}$ is an $(\delta, \varepsilon)$-KKT point  of problem~\eqref{eq:model}.  
\end{theorem}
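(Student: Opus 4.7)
The plan is to adapt the proof of Theorem~\ref{thm:allcomple} and address three extensions beyond it: expressing the descent bound in terms of $\Delta_{F_0}$ instead of $\Delta_F$, extracting an $\varepsilon$-KKT point of the original problem~\eqref{eq:model} from the auxiliary iterate $\overline{\vx}^{(k'+1)}$, and quantifying how close $\vx^{(k'+1)}$ is to $\overline{\vx}^{(k'+1)}$. The first extension uses that $\vx^{(0)}$ is feasible, so $F(\vx^{(0)}, \vy^{(0)}) = F_0(\vx^{(0)})$, while the $l_g$-Lipschitz continuity of $g$ together with~\eqref{eq:lemmaineq2} yields $F(\vx^{(K)}, \vy^{(K)}) \geq F_0(\vx^{(K)}) - l_g B_1 \mu_{K-1} \geq \inf_\vx F_0(\vx) - l_g B_1 \mu_{K-1}$. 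Thus the LHS of the telescoping step in~\eqref{eq:stationarybound} is bounded by $\Delta_{F_0} + l_g B_1 \mu_{K-1}$. The new constraint $\mu_k \leq \varepsilon^2/(480 B_1 l_g L_f)$ in~\eqref{eq:delta2} makes the correction negligible, and the enlarged $\overline{K}_\varepsilon = 120 L_f \Delta_{F_0}\varepsilon^{-2}$ (a factor-ten increase over $K_\varepsilon$) shrinks the main descent term with a strict margin. Combined with the remaining $\mu_k$ constraints (which mirror those in~\eqref{eq:delta}), this yields a strengthened analog of~\eqref{eq:Kepsilon1} from which Part~1 follows exactly as in Theorem~\ref{thm:allcomple}, and moreover gives $3L_f\|\vx^{(k'+1)} - \vx^{(k')}\| \leq c_1\varepsilon$ for some $c_1 < 1$.

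For Part~2, Lemma~\ref{lem:barxyz-2} already supplies exact primal feasibility $\vA\overline{\vx}^{(k'+1)} + \vb = \mathbf{0}$ together with $\overline{\vy}^{(k'+1)} = \overline{\vA}\overline{\vx}^{(k'+1)} + \overline{\vb}$, and the prox optimality in~\eqref{eq:bary-2} combined with this identity gives $\overline{\vz}_1^{(k'+1)} \in \partial g(\overline{\vA}\overline{\vx}^{(k'+1)} + \overline{\vb})$. Taking $\vgamma = \overline{\vz}_2^{(k'+1)}$ and invoking~\eqref{eq:barx-2}, the stationarity residual rewrites as
\begin{align*}
\nabla f_0(\overline{\vx}^{(k'+1)}) + \vA^\top \overline{\vz}_2^{(k'+1)} + \overline{\vA}^\top \overline{\vz}_1^{(k'+1)} = \nabla f_0(\overline{\vx}^{(k'+1)}) - \nabla f_0(\vx^{(k')}) - \tau(\overline{\vx}^{(k'+1)} - \vx^{(k')}),
\end{align*}
whose norm is at most $3L_f\|\overline{\vx}^{(k'+1)} - \vx^{(k')}\|$ by $\tau = 2L_f$ and $L_f$-Lipschitzness of $\nabla f_0$. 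A triangle inequality splits this into $3L_f\|\overline{\vx}^{(k'+1)} - \vx^{(k'+1)}\|$, which via~\eqref{eq:lemmaineq1} and the new constraint $\mu_k \leq \varepsilon/(6\|[\overline{\vA};\vA]\|)$ is bounded by $\varepsilon/4$, plus $3L_f\|\vx^{(k'+1)} - \vx^{(k')}\| \leq c_1\varepsilon$ from Part~1. The factor $10$ in $\overline{K}_\varepsilon$ is chosen precisely so that $c_1 + 1/4 \leq 1$, yielding the required $\leq \varepsilon$ stationarity for $\overline{\vx}^{(k'+1)}$.

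Part~3 is then a direct consequence of~\eqref{eq:lemmaineq1} and the new constraint $\mu_k \leq \tau\delta/\|[\overline{\vA};\vA]\|$: together these give $\|\vx^{(k'+1)} - \overline{\vx}^{(k'+1)}\| \leq \tfrac{1}{\tau}\|[\overline{\vA};\vA]\|\mu_{k'} \leq \delta$, so $\vx^{(k'+1)}$ is $\delta$-close to the $\varepsilon$-KKT point $\overline{\vx}^{(k'+1)}$ of~\eqref{eq:model} produced in Part~2. I expect the main technical obstacle to be the constant bookkeeping in Part~2: each of the seven bounds in~\eqref{eq:delta2} is designed to kill exactly one of the error contributions (Step~1 descent remainder, the $l_g B_1 \mu$ correction from $F \leftrightarrow F_0$, KKT residuals of~\eqref{eq:model-spli}, KKT residuals of~\eqref{eq:model}, and the $\delta$-closeness), and one must verify that the enlargement of $\overline{K}_\varepsilon$ by a factor of ten really yields $c_1 < 3/4$ on the normalized residual $3L_f\|\vx^{(k'+1)} - \vx^{(k')}\|/\varepsilon$ so that the combined bound in Step~2 does not overshoot $\varepsilon$.
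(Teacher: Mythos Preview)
Your proposal is correct and follows essentially the same route as the paper's proof: convert the telescoping bound from $\Delta_F$ to $\Delta_{F_0}$ via the $l_g$-Lipschitz continuity of $g$ and the constraint $\mu_k\le \varepsilon^2/(480B_1 l_g L_f)$, use the enlarged $\overline K_\varepsilon$ to obtain $3L_f\|\vx^{(k'+1)}-\vx^{(k')}\|\le \tfrac{3}{4}\varepsilon$, derive the stationarity of $\overline\vx^{(k'+1)}$ from~\eqref{eq:barx-2} and~\eqref{eq:bary-2} together with exact feasibility~\eqref{eq:feasiblexy-2}, split $\|\overline\vx^{(k'+1)}-\vx^{(k')}\|$ by the triangle inequality so that the constraint $\mu_k\le \varepsilon/(6\|[\overline\vA;\vA]\|)$ yields the remaining $\varepsilon/4$, and read off $\delta$-closeness from~\eqref{eq:lemmaineq1}. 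The paper's arithmetic gives exactly $c_1=3/4$ (so $c_1+1/4=1$, not strict inequality), which is all that is needed.
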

\begin{proof}
	Notice $\mathbf{0}= \nabla f_0(\vx^{(k)}) + [
	\overline{\vA} ;
	\mathbf{A}]\zz \overline\vz^{(k+1)} +\tau (\overline\vx^{(k+1)}-\vx^{(k)})$ from~\eqref{eq:barx-2}. Hence through the same arguments to obtain~\eqref{eq:xerror-2}, we arrive at  
	\begin{equation}
		\label{eq:fobar}
		\left\|\nabla f_0(\overline{\vx}^{(k+1)}) + [
		\overline{\vA} ;
		\mathbf{A}]\zz \overline{\vz}^{(k+1)}\right\| \leq {3L_f}\left\|\overline{\vx}^{(k+1)}-{\vx}^{(k)}\right\|, \forall\, k\ge0.
	\end{equation}
	By the triangle inequality, it holds $\left\|\overline{\vx}^{(k+1)}-{\vx}^{(k)}\right\| \le \left\|\overline{\vx}^{(k+1)}-{\vx}^{(k+1)}\right\|+ \left\|{\vx}^{(k+1)}-{\vx}^{(k)}\right\|$. Thus from~\eqref{eq:lemmaineq1}, we have	
	$\left\|\overline{\vx}^{(k+1)}-{\vx}^{(k)}\right\| \le \left\|{\vx}^{(k+1)}-{\vx}^{(k)}\right\| + \frac{1}{\tau}\left\|[\overline{\vA}; \vA]\right\|{\mu_k},$ which together with~\eqref{eq:fobar} and $\tau=2L_f$ gives
	\begin{equation}
		\label{eq:fobar-2-1}
		\textstyle \left\|\nabla f_0(\overline{\vx}^{(k+1)}) + [
		\overline{\vA} ;
		\mathbf{A}]\zz \overline{\vz}^{(k+1)}\right\| \leq {3L_f}\left\|{\vx}^{(k+1)}-{\vx}^{(k)}\right\| + \frac{3}{2} \left\|[\overline{\vA}; \vA]\right\|{\mu_k}, \forall\,\ k\ge0.
	\end{equation}
	By using the same method to obtain argument~\eqref{eq:stationarybound} and the definition of $ \mu_k$, we have
	\begin{equation}\label{eq:stationarybound-15}
		\begin{aligned}
			\frac{(3L_f)^2}{K}&\sum_{k=0}^{K-1} \left\|\vx^{(k+1)}-{\vx}^{(k)}\right\|^2 \leq   \frac{6L_f\left(F(\vx^{(0)}, \vy^{(0)})-[f_0(\vx^{(K)}) +g(\vy^{(K)})]\right)}{K }+\frac{\varepsilon^2}{2}, \forall\, K\ge1.
		\end{aligned}	
	\end{equation}
	Since $({\vx}^{(0)}, \vy^{(0)})$ is a feasible point of problem \eqref{eq:model}, we have
	\begin{align*}
		&F(\vx^{(0)}, \vy^{(0)})-\left[f_0(\vx^{(K)}) +g(\vy^{(K)})\right]\\=& \left(f_0(\vx^{(0)})+g(\overline{\vA}\vx^{(0)}+\overline\vb)-\left[f_0(\vx^{(K)}) +g(\overline{\vA}{\vx}^{(K)} +	
		\overline{\vb})\right]\right) + \left[g(\overline{\vA}{\vx}^{(K)} +	
		\overline{\vb})-g(\vy^{(K)})\right] 
		\\
		\leq&  \Delta_{F_0} + l_g\|\overline{\vA}{\vx}^{(K)} +	
		\overline{\vb}-\vy^{(K)}\|  \\
		\leq&  \Delta_{F_0} + \frac{\epsilon^2}{480 L_f}   
		, \quad\forall\, K\ge1,
	\end{align*}
	where the first inequality is because  $g$ is $l_g$-Lipschitz continuous by Assumption~\ref{assume:problemsetup} and the second inequality is because 
	$ \|{\vy}^{(K)}-
	(\overline{\vA}{\vx}^{(K)} +	
	\overline{\vb})\|
	\leq {\mu_K} B_1\leq \frac{\epsilon^2}{480l_g L_f}$ according to~\eqref{eq:feasibility} and the definition of $\mu_k$.
	Now substituting the above inequality into~\eqref{eq:stationarybound-15} with $K=\overline K_\varepsilon\ge\lceil 120 L_f\Delta_{F_0}\varepsilon^{-2} \rceil$,  
	we obtain 	
	$$\frac{(3L_f)^2}{K}\sum_{k=0}^{K-1} \left\|\vx^{(k+1)}-{\vx}^{(k)}\right\|^2 \le \frac{6 L_f \Delta_{F_0} + \frac{\epsilon^2}{80}   }{120 L_f\Delta_{F_0}\varepsilon^{-2}} +\frac{\varepsilon^2}{2} \leq  \frac{\varepsilon^2}{16}+\frac{\varepsilon^2}{2}=\frac{9\varepsilon^2}{16}.$$  
	Since $k'=\argmin_{k=0,\dots,\overline{K}_\varepsilon-1}\left\|\vx^{(k+1)}-{\vx}^{(k)}\right\|$, we have
	${3L_f}\left\|\vx^{(k'+1)}-{\vx}^{(k')}\right\| \leq \frac{3\varepsilon}{4}$. Then, it follows from the same arguments in the proof of Theorem~\ref{thm:allcomple} that $(\vx^{(k'+1)},\vy^{(k'+1)})$ is an  $\varepsilon$-KKT point  of problem~\eqref{eq:model-spli}. 
	
	Also,
	we obtain from~\eqref{eq:fobar-2-1} and the choice of $\mu_k$ that 
	\begin{equation}\label{eq:fobar-2-2}
		\left\|\nabla f_0(\overline{\vx}^{(k'+1)}) + [
		\overline{\vA} ;
		\mathbf{A}]\zz \overline{\vz}^{(k'+1)}\right\| \leq \varepsilon.
	\end{equation}		
	On the other hand, we have from~\eqref{eq:feasiblexy-2} and~\eqref{eq:bary-2} that	
	\begin{equation}
		\label{eq:stabarx}		
		\overline{\vy}^{(k'+1)}= 
		(\overline{\vA}\overline{\vx}^{(k'+1)} +	
		\overline{\vb}),\quad {\vA}\overline{\vx}^{(k'+1)} +
		{\vb} =\vzero ,\quad
		\overline{\vz}_1^{(k'+1)}\in \partial g(\overline{\vy}^{(k'+1)}).	
	\end{equation}
	Recall $g(\vx) = g(\overline\vA\vx + \overline\vb)$. 	By~\cite{clarke1990optimization}, 
	it holds that
	$
	\partial g(\overline{\vx}^{(k'+1)})=\overline{\mathbf{co}}\left(\left\{\overline{\vA}\zz \vxi: \vxi \in \partial g\big(\overline{\vA}\overline{\vx}^{(k'+1)} +	
	\overline{\vb}\big)\right\} \right),
	$
	and thus from~\eqref{eq:stabarx}, it follows	 $\overline{\vA}\zz\overline{\vz}_1^{(k'+1)} \in \partial g(\overline{\vx}^{(k'+1)}).$ Then by~\eqref{eq:fobar-2-2}, we have 
	$$
	\textstyle \dist\left(\vzero, \nabla {f}_0\big(\overline{\vx}^{(k'+1)}\big)+\partial g\big(\overline{\vx}^{(k'+1)}\big)+{\vA}\zz \overline{\vz}^{(k'+1)}_2\right) \le \left\|\nabla {f}_0\big(\overline{\vx}^{(k'+1)}\big)+\overline{\vA}\zz\overline{\vz}^{(k'+1)}_1+{\vA}\zz \overline{\vz}^{(k'+1)}_2\right\| \le \varepsilon,$$
	which, together with 	${\vA}\overline{\vx}^{(k'+1)} + {\vb} =\vzero$ from~\eqref{eq:stabarx}, indicates that
	$\overline{\vx}^{(k'+1)}$ is an $\varepsilon$-KKT point of~\eqref{eq:model}. 
	
	By~\eqref{eq:lemmaineq1}, 	$\|\overline{\vx}^{(k'+1)}-{\vx}^{(k'+1)}\|\leq \frac{1}{\tau}\left\|[\overline{\vA}; \vA]\right\|\mu_k\leq \delta$, where the second inequality is from the definition of~$\mu_k$ which ensures $ \mu_k \leq  \frac{\tau\delta}{\|[\overline{\vA}; \vA]\|}$. The proof is then completed.
\end{proof}
 
\begin{remark}
	\label{rem:2}
	With a similar technique to obtain arguments in Remark \ref{rem:1}, we can obtain that for any $0 \le k\leq \overline K_\varepsilon $, the inequalities \eqref{eq:xkx01}--\eqref{eq:xkx02} hold with $K_{\varepsilon}$ replaced by $\overline K_{\varepsilon}$. Thus, the quantities $B_3^k$ and $B_4^k$ defined in Lemma~\ref{lem:boundxyz-2} are at most of order $ {\mathcal O}(\varepsilon^{-1})$, and are well-defined.  $\{\mu_k\}_{k\ge 0}$ given in~\eqref{eq:delta2}  can be well controlled. 
\end{remark}

\subsection{Number of inner iterations for finding $\vz^{(k+1)}$ satisfying~\eqref{eq:boundz-2}}\label{sec:z-prob}

To obtain the total number of inner iterations of Algorithm~\ref{alg:ipganormal} to find an $\varepsilon$-KKT point of~\eqref{eq:model-spli}, we still need to evaluate the  number of iterations for computing $\vz^{(k+1)}$ that satisfies the criterion in~\eqref{eq:boundz-2} for each $k\ge0$. 
The problem in~\eqref{eq:sublb-2} has the convex composite structure. Additionally, the gradient of the smooth function $\mathcal{D}_k(\vz)-g^{\star} (\vz_1)$ in the objective function $\mathcal{D}_k$ in~\eqref{eq:sublb-2} is $L_\mathcal{D}$-Lipschitz continuous with 
\begin{equation}\label{def:L_D}
L_\mathcal{D}:=\lambda_{\max}([\overline{\vA}; \vA][\overline{\vA}; \vA]\zz)/\tau.
\end{equation}
Hence, we apply the APG method in~\cite{beck2009fast} with a standard restarting technique
to~\eqref{eq:sublb-2} to find $\vz^{(k+1)}$.  The restarted APG algorithm instantiated on~\eqref{eq:sublb-2} is presented in Algorithm~\ref{alg:restartedAPG}. 
While it is written in a double-loop format, this is merely for notational brevity—the algorithm is inherently a restarted APG method, which can be reformulated as a single-loop algorithm.
At each iteration, the algorithm maintains both a main iterate  $\vz^{(j,k)}$ and an auxiliary iterate $\widehat\vz^{(j,k)}$. In the algorithm, we use $\widehat{\mathcal{G}}_1^j$ and $\widehat{\mathcal{G}}_2^j$ to represent the gradients of  $\mathcal{D}_k(\vz)-g^{\star} (\vz_1)$ with respect to $\vz_1$ and $\vz_2$, respectively, at $\vz=\widehat\vz^{(j,k)}$. 
To accelerate convergence, the algorithm restarts every
  $j_k$ APG steps. We call each APG step, i.e., Line~5-9 in Algorithm~\ref{alg:restartedAPG}, as one inner iteration of Algorithm~\ref{alg:ipganormal}. 

\begin{algorithm}[H]
	\caption{Restarted accelerated proximal gradient method for~\eqref{eq:sublb-2}}\label{alg:restartedAPG}
	\begin{algorithmic}[1]
		\State \textbf{Input:} an initial point $\vz^{\text{ini}}\in\dom(g^*)$ where $(\vx^{(0)},\vy^{(0)})$ is the same as in Algorithm~\ref{alg:ipganormal}.
		\State $\vz^{(0,k)}\leftarrow \vz^{\text{ini}}$, $\widehat\vz^{(0,k)}\leftarrow \vz^{(0,k)}$, $\alpha_0\leftarrow 1$.
		\For  {$i=0,\dots,i_k-1$}
		\For {$j=0,\dots,j_k-1$}
		\State $\widehat{\mathcal{G}}_1^j\leftarrow\frac{1}{\tau}\overline{\vA}\left(\overline{\vA}\zz\widehat\vz_1^{(j,k)} +\vA\zz\widehat\vz_2^{(j,k)} +\nabla f_0(\vx^{(k)})-\tau \vx^{(k)}\right)-\overline\vb.$
		
		\State
		$\widehat{\mathcal{G}}_2^j\leftarrow\frac{1}{\tau}\vA\left(\overline{\vA}\zz\widehat\vz_1^{(j,k)} +\vA\zz\widehat\vz_2^{(j,k)} +\nabla f_0(\vx^{(k)})-\tau \vx^{(k)}\right)-\vb$.
		
		\State $\vz_1^{(j+1,k)}\leftarrow \mathbf{prox}_{L_\mathcal{D}^{-1}g^*}\left(\widehat\vz_1^{(j,k)}-\frac{1}{L_\mathcal{D}}\widehat{\mathcal{G}}_1^j\right)$,~
		$\vz_2^{(j+1,k)}\leftarrow \widehat\vz_2^{(j,k)}-\frac{1}{L_\mathcal{D}}\widehat{\mathcal{G}}_2^j$.
		
		\State $\alpha_{j+1}\leftarrow \frac{1+\sqrt{1+4\alpha_j^2}}{2}$.
		
		\State $\widehat\vz^{(j+1,k)}\leftarrow \vz^{(j+1,k)}+\left(\frac{\alpha_{j}-1}{\alpha_{j+1}}\right)\left(\vz^{(j+1,k)}-\vz^{(j,k)}\right)$.
		
		\EndFor
		\State $\vz^{(0,k)}\leftarrow \vz^{(j_k,k)}$, $\widehat\vz^{(0,k)}\leftarrow \vz^{(0,k)}$, $\alpha_0\leftarrow 1$.
		\EndFor
		\State \textbf{Output:} $\vz^{(k+1)}=\vz^{(0,k)}$.
	\end{algorithmic}
\end{algorithm}

The number of APG steps performed in Algorithm~\ref{alg:restartedAPG} to find a point satisfying~\eqref{eq:boundz-2} is known when Assumption~\ref{assume:problemsetup} and either one of Assumptions~\ref{ass:dual2} and~\ref{ass:polyhedralg} hold. We first present the result when Assumptions	\ref{assume:problemsetup} and~\ref{ass:dual2} hold.	
\begin{theorem}\label{thm:inner-iter-dual2}
	Suppose Assumptions~\ref{assume:problemsetup} and~\ref{ass:dual2} hold.
	Given any $\mu_k>0$, if $j_k=\lceil 2\sqrt{2}\kappa ([\overline{\vA}; \vA])\rceil$ and {$i_k \ge  \left\lceil \log_2(\frac{2(\mathcal{D}_k(\vz^{\text{ini}})-\mathcal{D}_k^*)}{\kappa_\mathcal{D}\mu_k^2})\right\rceil$} 
	in Algorithm~\ref{alg:restartedAPG}, the output $\vz^{(k+1)}$ satisfies the criteria in~\eqref{eq:boundz-2}. Here, 
	$\kappa_\mathcal{D}=\lambda^+_{\min}([\overline{\vA}; \vA][\overline{\vA}; \vA]\zz)/\tau$.
\end{theorem}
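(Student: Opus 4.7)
The plan is to leverage the $\kappa_\mathcal{D}$-strong convexity of $\mathcal{D}_k$ together with the standard $O(1/j^2)$ rate of APG to show that each restart cycle halves the objective gap, then iterate this geometric decrease $i_k$ times and convert the final objective gap into a distance bound.

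First, I would observe that under Assumption~\ref{ass:dual2}, the matrix $[\overline{\vA}; \vA][\overline{\vA}; \vA]\zz$ is positive definite with smallest eigenvalue $\tau\kappa_\mathcal{D}$ and largest eigenvalue $\tau L_\mathcal{D}$. Consequently, the smooth quadratic part of $\mathcal{D}_k$ is $\kappa_\mathcal{D}$-strongly convex in $\vz$ and has $L_\mathcal{D}$-Lipschitz gradient, while $g^*(\vz_1)-\vz_1^\top\overline{\vb}-\vz_2^\top\vb$ is proper lower semicontinuous convex. Hence $\mathcal{D}_k$ is $\kappa_\mathcal{D}$-strongly convex, so $\Omega^{(k+1)}=\{\vz_k^\star\}$ is a singleton and the quadratic-growth inequality
\begin{equation*}
\tfrac{\kappa_\mathcal{D}}{2}\|\vz-\vz_k^\star\|^2 \le \mathcal{D}_k(\vz)-\mathcal{D}_k^\star
\end{equation*}
holds for every $\vz$. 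Also, every iterate $\vz_1^{(j+1,k)}$ produced by Line~7 lies in $\dom(g^*)$ (it is the output of $\mathbf{prox}_{L_\mathcal{D}^{-1}g^*}$, whose value is finite), so the first condition in~\eqref{eq:boundz-2} holds automatically.

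Next, I would apply the standard FISTA/APG bound from~\cite{beck2009fast} to one inner loop (Lines~5--9), which after $j_k$ steps started from $\vz^{(0,k)}$ yields
\begin{equation*}
\mathcal{D}_k\big(\vz^{(j_k,k)}\big)-\mathcal{D}_k^\star \;\le\; \frac{2L_\mathcal{D}\,\|\vz^{(0,k)}-\vz_k^\star\|^2}{(j_k+1)^2} \;\le\; \frac{4L_\mathcal{D}}{\kappa_\mathcal{D}(j_k+1)^2}\big(\mathcal{D}_k(\vz^{(0,k)})-\mathcal{D}_k^\star\big),
\end{equation*}
where the second inequality uses the quadratic-growth bound above. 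Substituting $j_k\ge 2\sqrt{2}\,\kappa([\overline{\vA};\vA])=2\sqrt{2}\sqrt{L_\mathcal{D}/\kappa_\mathcal{D}}$ gives $4L_\mathcal{D}/(\kappa_\mathcal{D}(j_k+1)^2)\le 1/2$, so each restart cycle halves the objective gap.

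Iterating over $i_k$ restart cycles from $\vz^{\text{ini}}$ then gives
\begin{equation*}
\mathcal{D}_k(\vz^{(k+1)})-\mathcal{D}_k^\star \;\le\; 2^{-i_k}\big(\mathcal{D}_k(\vz^{\text{ini}})-\mathcal{D}_k^\star\big),
\end{equation*}
and one final application of quadratic growth combined with the chosen $i_k\ge\lceil\log_2(2(\mathcal{D}_k(\vz^{\text{ini}})-\mathcal{D}_k^\star)/(\kappa_\mathcal{D}\mu_k^2))\rceil$ yields $\|\vz^{(k+1)}-\vz_k^\star\|^2\le\mu_k^2$, which is exactly $\dist(\vz^{(k+1)},\Omega^{(k+1)})\le\mu_k$. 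The main piece of care is matching the FISTA rate to the composite smooth-plus-lsc-convex form of $\mathcal{D}_k$ and correctly identifying $L_\mathcal{D}$ and $\kappa_\mathcal{D}$ as the Lipschitz and strong-convexity constants inherited from $\frac{1}{\tau}[\overline{\vA};\vA][\overline{\vA};\vA]\zz$; once this is set up, the rest of the argument is essentially mechanical.
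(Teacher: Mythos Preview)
Your proposal is correct and follows essentially the same route as the paper: use the $\kappa_\mathcal{D}$-strong convexity of $\mathcal{D}_k$ (from the full row rank of $[\overline{\vA};\vA]$) together with the Beck--Teboulle APG rate to show each restart cycle halves the objective gap, iterate $i_k$ times, and convert back to a distance bound via quadratic growth. Your explicit verification that $\vz_1^{(k+1)}\in\dom(g^\star)$ (the first condition in~\eqref{eq:boundz-2}) is a nice touch that the paper leaves implicit.
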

 
\begin{proof}
	With Assumption~\ref{ass:dual2}, the problem in~\eqref{eq:sublb-2} is $\kappa_\mathcal{D}$-strongly convex. Consider the end of the first inner loop (i.e., $i=0$) of Algorithm~\ref{alg:restartedAPG}.  According to~\cite[Theorem 4.4.]{beck2009fast}, after $j_k$ APG steps, the APG method generates an output $\vz^{(j_k,k)}$ satisfying 
	$$
	\mathcal{D}_k(\vz^{(j_k,k)})-\mathcal{D}_k^*\leq\frac{2L_\mathcal{D}\dist^2(\vz^{\text{ini}}, \Omega^{(k+1)})}{j_k^2}\leq\frac{4L_\mathcal{D} [\mathcal{D}_k(\vz^{\text{ini}})-\mathcal{D}_k^*]}{\kappa_\mathcal{D}j_k^2}=\frac{4 \kappa^2([\overline{\vA}; \vA]) [\mathcal{D}_k(\vz^{\text{ini}})-\mathcal{D}_k^*]}{j_k^2},
	$$
	where the second inequality is by the $\kappa_\mathcal{D}$-strong convexity of $\mathcal{D}_k$. Since $j_k=\lceil 2\sqrt{2}\kappa ([\overline{\vA}; \vA])\rceil$, we have $\mathcal{D}_k(\vz^{(j_k,k)})-\mathcal{D}_k^* \le \frac{1}{2}\left(\mathcal{D}_k(\vz^{\text{ini}})-\mathcal{D}_k^*\right)$, meaning that we can reduce the objective gap by a half with each inner loop of  Algorithm~\ref{alg:restartedAPG}. Since the next inner loop is started at $\vz^{(j_k,k)}$ according to Line 11 of Algorithm~\ref{alg:restartedAPG}, we repeat this argument $i_k {\ge} \left\lceil \log_2(\frac{2(\mathcal{D}_k(\vz^{\text{ini}})-\mathcal{D}_k^*)}{\kappa_\mathcal{D}\mu_k^2})\right\rceil$ times and show that the final output $\vz^{(k+1)}$ satisfies $\mathcal{D}_k(\vz^{(k+1)})-\mathcal{D}_k^*\leq \frac{\kappa_\mathcal{D}\mu_k^2}{2}$ and thus
	$$
	\dist(\vz^{(k+1)}, \Omega^{(k+1)})\leq  \sqrt{\frac{2(\mathcal{D}_k(\vz^{(k+1)})-\mathcal{D}_k^*)}{\kappa_\mathcal{D}}} \leq \mu_k,
	$$
	which means $\vz^{(k+1)}$ satisfies ~\eqref{eq:boundz-2}. 
\end{proof}

\begin{remark}
		In Theorem~\ref{thm:inner-iter-dual2}, 
		to satisfy the condition $i_k \ge  \left\lceil \log_2(\frac{2(\mathcal{D}_k(\vz^{\text{ini}})-\mathcal{D}_k^*)}{\kappa_\mathcal{D}\mu_k^2})\right\rceil$, one can set $i_k=\left\lceil \log_2(\frac{ \|\vxi_k\|^2}{\kappa^2_\mathcal{D}\mu_k^2})\right\rceil$ for any $\vxi_k \in \partial \cD_k(\vz^{\text{ini}})$. 
	This is a valid and computable $i_k$, because by the $\kappa_\mathcal{D}$-strong convexity of $\cD_k$, it holds that $\mathcal{D}_k(\vz^{\text{ini}})-\mathcal{D}_k^* \le \frac{\|\vxi_k\|^2}{2\kappa_\cD}$ for any $\vxi_k \in \partial \cD_k(\vz^{\text{ini}})$. 
\end{remark}

Below we derive the number of APG steps performed in Algorithm~\ref{alg:restartedAPG}  to find a point satisfying~\eqref{eq:boundz-2} under Assumptions~\ref{assume:problemsetup} and~\ref{ass:polyhedralg}. In this case, 
$g^{\star}(\vz_1)=\iota_{\vC \vz_1\leq\vd}(\vz_1)$ because the conjugate of $\iota_{\vC \vz_1\leq\vd}(\vz_1)$ is $\max\{\vu\zz\vz_1: \vC \vu\leq\vd\}=g(\vz_1)$ and the conjugate of the conjugate of a closed convex function is just the function itself. In this case, the minimization problem in~\eqref{eq:sublb-2} 
may not be strongly convex, but it is a quadratic program for which a restarted APG method can still have a linear convergence rate as shown in~\cite{necoara2019linear}.  Recall that $\Omega^{(k+1)}$ is the solution set in~\eqref{eq:sublb-2}. Since $\|\cdot\|^2$ is strongly convex, by the same proof of Eqn.~(40) in~\cite{necoara2019linear}, there exists $\vnu^{(k)}\in\mathbb{R}^d$ such that 
\begin{eqnarray}
	\label{eq:tk}
	[\overline{\vA}; \vA]\zz\overline\vz^{(k+1)} +\nabla f_0(\vx^{(k)})-\tau \vx^{(k)}=\vnu^{(k)},~\forall\,  \overline\vz^{(k+1)} \in \Omega^{(k+1)}.
\end{eqnarray}
As a result, we have 
\begin{eqnarray}
	\label{eq:sk}
	- (\overline\vz^{(k+1)}_1)^{\top}\overline{\vb}- (\overline\vz^{(k+1)}_2)^{\top}\vb =	\mathcal{D}_k^*-\frac{1}{2\tau}\|\vnu^{(k)}\|^2,~\forall\,  \overline\vz^{(k+1)} \in \Omega^{(k+1)}.
\end{eqnarray}
Therefore, $\Omega^{(k+1)}$ can be characterized  as the solution set of the linear system as follows
\begin{align}
	\label{eq:linearsystem}
	\Omega^{(k+1)}=\left\{\vz=(\vz_1\zz,\vz_2\zz)\zz\,\Bigg| \,
	\begin{array}{c}
		[\overline{\vA}; \vA]\zz\vz=\vnu^{(k)}-\nabla f_0(\vx^{(k)})+\tau \vx^{(k)},\\
		-\vz_1^{\top}\overline{\vb}- \vz_2^{\top}\vb=\mathcal{D}_k^*-\frac{1}{2\tau}\|\vnu^{(k)}\|^2,\\
		\vC \vz_1\leq\vd
	\end{array}  
	\right\}.
\end{align}
The Hoffman constant is a constant $\theta(\overline{\vA},\vA,\overline\vb,\vb,\vC)$ such that 
\begin{align}
	\label{eq:defHoffman}
	\dist(\vz, \Omega^{(k+1)})\leq \theta(\overline{\vA},\vA,\overline\vb,\vb,\vC)\left\|\left[
	\begin{array}{c}
		[\overline{\vA}; \vA]\zz\vz+\nabla f_0(\vx^{(k)})-\tau \vx^{(k)}-\vnu^{(k)}\\
		-\vz_1^{\top}\overline{\vb}- \vz_2^{\top}\vb-\mathcal{D}_k^*+\frac{1}{2\tau}\|\vnu^{(k)}\|^2\\
		\left(\vC \vz_1-\vd \right)_+
	\end{array}
	\right]
	\right\|, \forall\, \vz.
\end{align}
Note that $\theta(\overline{\vA},\vA,\overline\vb,\vb,\vC)$ does not depend on the right-hand sides of the linear system in~\eqref{eq:linearsystem}. As shown in~\cite{necoara2019linear}, the linear convergence rate of the APG depends on $\theta(\overline{\vA},\vA,\overline\vb,\vb,\vC)$. In a special case where $\overline{\vb}=\mathbf{0}$ and $\vb=\mathbf{0}$, we drop the equality $-\vz_1^{\top}\overline{\vb}- \vz_2^{\top}\vb=\mathcal{D}_k^*-\frac{1}{2\tau}\|\vnu^{(k)}\|^2$ from~\eqref{eq:linearsystem} and denote the corresponding Hoffman constant by $\theta(\overline{\vA},\vA,\vC)$ which satisfies
\begin{align}
	\label{eq:defHoffman_small}
	\dist(\vz, \Omega^{(k+1)})\leq \theta(\overline{\vA},\vA,\vC)\left\|\left[
	\begin{array}{c}
		[\overline{\vA}; \vA]\zz\vz+\nabla f_0(\vx^{(k)})-\tau \vx^{(k)}-\vnu^{(k)}\\
		\left(\vC \vz_1-\vd \right)_+
	\end{array}
	\right]
	\right\| , \forall\, \vz.
\end{align}
With these notations, the number of APG steps to obtain $\vz^{k+1}$ satisfying~\eqref{eq:boundz-2} is characterized as follows. 


\begin{theorem}
	\label{thm:complesub} 
	Suppose Assumptions~\ref{assume:problemsetup} and~\ref{ass:polyhedralg} hold.
	Given any $\mu_k>0$, if  $j_k\ge \left\lceil 2\sqrt{2 L_\mathcal{D}/\rho_k}\right\rceil$ and $i_k\ge \left\lceil \log_2(\frac{2(\mathcal{D}_k(\vz^{\text{ini}})-\mathcal{D}_k^*)}{\rho_k\mu_k^2})\right\rceil$   in Algorithm~\ref{alg:restartedAPG},  the output $\vz^{(k+1)}$ satisfies  the criteria in~\eqref{eq:boundz-2}. Here, 
	\begin{align}
		\label{eq:muk}
		\rho_k= \left\{
		\begin{array}{ll}
			\left[\theta^2(\overline{\vA},\vA,\overline\vb,\vb,\vC)(\tau+ \mathcal{D}_k(\vz^{\text{ini}})-\mathcal{D}_k^*+2\tau \|\vnu^{(k)}\|^2)\right]^{-1}&\text{ if }\overline{\vb}\neq\vzero\text{ or } \vb\neq\vzero,\\
			\left[\theta^2(\overline{\vA},\vA,\vC)\tau\right]^{-1} &\text{ if }\overline{\vb}=\vzero\text{ and } \vb=\vzero,
		\end{array}
		\right.
	\end{align}
	with $\vnu^{(k)}$ defined in~\eqref{eq:tk} for $k\geq0$.
\end{theorem}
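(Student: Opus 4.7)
The plan is to mirror the argument used in the proof of Theorem~\ref{thm:inner-iter-dual2}, with the strong convexity of $\mathcal{D}_k$ replaced by a quadratic growth bound of the form $\mathcal{D}_k(\vz)-\mathcal{D}_k^*\ge \tfrac{\rho_k}{2}\dist^2(\vz,\Omega^{(k+1)})$ that holds along the APG trajectory. Under Assumption~\ref{ass:polyhedralg}, $g^{\star}=\iota_{\{\vC\vz_1\le\vd\}}$, so $\mathcal{D}_k$ is a convex quadratic plus a polyhedral indicator and $\Omega^{(k+1)}$ is exactly the polyhedron described in~\eqref{eq:linearsystem}. Because the proximal step on $g^{\star}$ in Line~7 of Algorithm~\ref{alg:restartedAPG} projects onto $\{\vC\vz_1\le\vd\}$, every iterate satisfies $\vC\vz_1^{(j,k)}\le\vd$; consequently the inequality residual $(\vC\vz_1-\vd)_+$ that appears in the Hoffman bound~\eqref{eq:defHoffman}--\eqref{eq:defHoffman_small} vanishes on all iterates, in particular $\vz_1^{(k+1)}\in\dom(g^{\star})$.

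The core technical step is to convert the Hoffman bound into the quadratic growth inequality with the explicit constant $\rho_k$ in~\eqref{eq:muk}. Let $\bar\vz\in\Omega^{(k+1)}$ and set $\vw:=\vz-\bar\vz$. Using~\eqref{eq:tk} one expands
\begin{equation*}
[\overline{\vA};\vA]\zz\vz+\nabla f_0(\vx^{(k)})-\tau\vx^{(k)}-\vnu^{(k)}=[\overline{\vA};\vA]\zz\vw,
\end{equation*}
and using~\eqref{eq:sk} together with $g^{\star}(\vz_1)=g^{\star}(\bar\vz_1)=0$ one writes $-\vz_1\zz\overline\vb-\vz_2\zz\vb-\mathcal{D}_k^*+\tfrac{1}{2\tau}\|\vnu^{(k)}\|^2=-[\overline\vb;\vb]\zz\vw$. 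A short calculation then gives
\begin{equation*}
\mathcal{D}_k(\vz)-\mathcal{D}_k^*=\tfrac{1}{2\tau}\|[\overline{\vA};\vA]\zz\vw\|^2+\tfrac{1}{\tau}\vnu^{(k)\top}[\overline{\vA};\vA]\zz\vw-[\overline\vb;\vb]\zz\vw.
\end{equation*}
Squaring the two equality residuals and bounding the cross term $\vnu^{(k)\top}[\overline{\vA};\vA]\zz\vw$ by Young's inequality, one controls the right-hand side of~\eqref{eq:defHoffman} by $\theta^2(\overline{\vA},\vA,\overline\vb,\vb,\vC)\bigl(\tau+[\mathcal{D}_k(\vz)-\mathcal{D}_k^*]+2\tau\|\vnu^{(k)}\|^2\bigr)\cdot[\mathcal{D}_k(\vz)-\mathcal{D}_k^*]$, after using the monotone descent $\mathcal{D}_k(\vz^{(j,k)})\le \mathcal{D}_k(\vz^{\mathrm{ini}})$ of restarted APG to replace $\mathcal{D}_k(\vz)-\mathcal{D}_k^*$ inside the parenthesis by $\mathcal{D}_k(\vz^{\mathrm{ini}})-\mathcal{D}_k^*$. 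This yields $\dist^2(\vz,\Omega^{(k+1)})\le \rho_k^{-1}[\mathcal{D}_k(\vz)-\mathcal{D}_k^*]$ with $\rho_k$ as in~\eqref{eq:muk}. In the special case $\overline\vb=\vb=\vzero$, the scalar equality drops from~\eqref{eq:linearsystem}, the cross term disappears, and $\rho_k=[\theta^2(\overline{\vA},\vA,\vC)\tau]^{-1}$ emerges directly.

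With the quadratic growth in hand, the restart analysis is identical in form to that of Theorem~\ref{thm:inner-iter-dual2}. By the standard APG rate~\cite{beck2009fast},
\begin{equation*}
\mathcal{D}_k(\vz^{(j_k,k)})-\mathcal{D}_k^*\le \frac{2L_\mathcal{D}\dist^2(\vz^{\mathrm{ini}},\Omega^{(k+1)})}{j_k^2}\le \frac{4L_\mathcal{D}}{\rho_k j_k^2}\bigl(\mathcal{D}_k(\vz^{\mathrm{ini}})-\mathcal{D}_k^*\bigr),
\end{equation*}
so the choice $j_k\ge \lceil 2\sqrt{2L_\mathcal{D}/\rho_k}\rceil$ halves the objective gap per restart. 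Repeating this $i_k\ge\lceil\log_2(2(\mathcal{D}_k(\vz^{\mathrm{ini}})-\mathcal{D}_k^*)/(\rho_k\mu_k^2))\rceil$ times drives $\mathcal{D}_k(\vz^{(k+1)})-\mathcal{D}_k^*\le \rho_k\mu_k^2/2$, and one more application of the quadratic growth gives $\dist(\vz^{(k+1)},\Omega^{(k+1)})\le \mu_k$. Combined with the feasibility $\vz_1^{(k+1)}\in\dom(g^{\star})$ already noted, both conditions in~\eqref{eq:boundz-2} are satisfied.

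The main obstacle is the quadratic growth step: unlike the strongly convex case, the Hoffman bound only controls the \emph{residual} of a linear system, and turning this into a bound by $\mathcal{D}_k(\vz)-\mathcal{D}_k^*$ forces one to deal simultaneously with the gradient residual and the scalar equality residual in~\eqref{eq:linearsystem}. The coupling between these two, together with the need to bound the $\vnu^{(k)}$-dependent cross term, is precisely what produces the extra factor $\mathcal{D}_k(\vz^{\mathrm{ini}})-\mathcal{D}_k^*+2\tau\|\vnu^{(k)}\|^2$ in the first branch of~\eqref{eq:muk}; absorbing it uniformly along the trajectory is where one must invoke the descent property of restarted APG. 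Once this constant is pinned down, the rest of the argument is a routine template already established in Theorem~\ref{thm:inner-iter-dual2}.
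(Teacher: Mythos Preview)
Your overall plan matches the paper's proof: establish a quadratic growth inequality
\[
\tfrac{\rho_k}{2}\dist^2(\vz,\Omega^{(k+1)})\le \mathcal{D}_k(\vz)-\mathcal{D}_k^*
\]
on the level set $\{\vz:\vC\vz_1\le\vd,\ \mathcal{D}_k(\vz)\le\mathcal{D}_k(\vz^{\mathrm{ini}})\}$, then run the restart argument exactly as in Theorem~\ref{thm:inner-iter-dual2}. The paper obtains the quadratic growth by a direct citation of \cite[Theorem~10]{necoara2019linear}; you instead try to derive it in place from the Hoffman bound~\eqref{eq:defHoffman}. That is a legitimate alternative, but your sketch of this step has a real gap.

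The problematic line is ``bounding the cross term $\vnu^{(k)\top}[\overline{\vA};\vA]\zz\vw$ by Young's inequality.'' Write $r_1=[\overline{\vA};\vA]\zz\vw$, $r_2=-[\overline\vb;\vb]\zz\vw$, and $\delta=\mathcal{D}_k(\vz)-\mathcal{D}_k^*=\tfrac{1}{2\tau}\|r_1\|^2+\tfrac{1}{\tau}\vnu^{(k)\top}r_1+r_2$. Young's inequality on the cross term alone gives you nothing, because $r_2$ is unconstrained in sign: one can make $\|r_1\|$ large and $r_2$ very negative while keeping $\delta$ small, so $\|r_1\|^2+r_2^2$ is not controlled by $\delta$ from this identity by itself. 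What actually pins things down is the \emph{first-order optimality condition} at $\bar\vz\in\Omega^{(k+1)}$: for every feasible $\vz$,
\[
0\le \big\langle \nabla(\mathcal{D}_k-g^{\star})(\bar\vz),\,\vz-\bar\vz\big\rangle
=\tfrac{1}{\tau}\vnu^{(k)\top}r_1+r_2,
\]
so the linear part of $\delta$ is nonnegative and hence $\|r_1\|^2\le 2\tau\delta$. From there $|r_2|\le \delta+\tfrac{1}{\tau}|\vnu^{(k)\top}r_1|\le \delta+\|\vnu^{(k)}\|\sqrt{2\delta/\tau}$, and combining gives a bound of the form $\|r_1\|^2+r_2^2\le 2\delta\,(\tau+\delta+C\|\vnu^{(k)}\|^2)$, which after $\delta\le \mathcal{D}_k(\vz^{\mathrm{ini}})-\mathcal{D}_k^*$ yields a $\rho_k$ of the claimed shape. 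In the homogeneous case $\overline\vb=\vb=\vzero$ the same optimality inequality reduces to $\vnu^{(k)\top}r_1\ge 0$, giving $\|r_1\|^2\le 2\tau\delta$ directly; the cross term is not ``absent'' but rather nonnegative. This optimality-based step is precisely the content of \cite[Theorem~10]{necoara2019linear} that the paper invokes; once you replace your Young's-inequality sentence by this variational inequality, your argument closes and the remainder of your proposal (APG descent, feasibility $\vz_1^{(k+1)}\in\dom(g^{\star})$, and the halving-per-restart count) coincides with the paper's proof.
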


\begin{proof}
	According to~\cite[Theorem~10]{necoara2019linear}, $\mathcal{D}_k(\vz)$  has a quadratic growth on the level set 
	$$
	\mathcal{Z}_k:=\left\{\vz\big |\vC \vz_1\leq\vd,~\mathcal{D}_k(\vz)\leq \mathcal{D}_k(\vz^{\text{ini}}) \right\}.
	$$
	More specifically, it holds that
	\begin{align}
		\label{eq:quadraticgrowth}
		\frac{\rho_k}{2}\dist^2(\vz, \Omega^{(k+1)})\leq \mathcal{D}_k(\vz)-\mathcal{D}_k^*,\quad\forall\, \vz\in \mathcal{Z}_k,
	\end{align}
	where $\rho_k$ is defined in~\eqref{eq:muk}. Consider the end of the first inner loop (i.e., $i=0$) of Algorithm~\ref{alg:restartedAPG}.  According to~\cite[Theorem 4.4]{beck2009fast}, after $j_k$ APG steps, the APG method generates an output $\vz^{(j_k,k)}$ satisfying 
	$$
	\mathcal{D}_k(\vz^{(j_k,k)})-\mathcal{D}_k^*\leq\frac{2L_\mathcal{D}\dist^2(\vz^{\text{ini}}, \Omega^{(k+1)})}{j_k^2} \overset{\eqref{eq:quadraticgrowth}}\leq\frac{4 L_\mathcal{D} [\mathcal{D}_k(\vz^{\text{ini}})-\mathcal{D}_k^*]}{\rho_k j_k^2}.
	$$
	Since $j_k\ge \left\lceil 2\sqrt{2 L_\mathcal{D}/\rho_k}\right\rceil$, we have $\mathcal{D}_k(\vz^{(j_k,k)})-\mathcal{D}_k^* \le \frac{1}{2}\left(\mathcal{D}_k(\vz^{\text{ini}})-\mathcal{D}_k^*\right)$, 
	meaning that we can reduce the objective gap by a half with each inner loop of  Algorithm~\ref{alg:restartedAPG}. Since the next inner loop is started at $\vz^{(j_k,k)}$ according to Line 11 of Algorithm~\ref{alg:restartedAPG}, we repeat this argument $i_k\ge \left\lceil\log_2(\frac{2(\mathcal{D}_k(\vz^{\text{ini}})-\mathcal{D}_k^*)}{\rho_k\mu_k^2})\right\rceil$ times and show that the final output $\vz^{(k+1)}$ satisfies $\mathcal{D}_k(\vz^{(k+1)})-\mathcal{D}_k^*\leq \frac{\rho_k\mu_k^2}{2}$ and thus
	$$
	\dist(\vz^{(k+1)}, \Omega^{(k+1)})\leq  \sqrt{\frac{2(\mathcal{D}_k(\vz^{(k+1)})-\mathcal{D}_k^*)}{\rho_k}} 
	\leq  {\mu_k}{ },
	$$
	which means $\vz^{(k+1)}$ satisfies~\eqref{eq:boundz-2}.  The proof is then completed.  
\end{proof}

%

\begin{remark}
	\label{rem:4} 
	In Theorem~\ref{thm:complesub}, the lower bounds of $i_k$ and $j_k$ may not be computable. In Section~\ref{sec:com}, we will show  computable values of $i_k$ and $j_k$ that satisfy the required condition, provided that the Hoffman constant $\theta(\overline{\vA},\vA,\overline\vb,\vb,\vC)$ is given, and also we will provide a strategy to estimate the Hoffman constant, with an increase of the total oracle complexity by just a constant factor. 
\end{remark}

\subsection{Total number of inner iterations (oracles)}

Combining Theorems~\ref{thm:allcomple},~\ref{thm:inner-iter-dual2} and~\ref{thm:complesub}, we derive the total number of inner iterations for computing an $\varepsilon$-KKT point of~\eqref{eq:model-spli} as follows. 

\begin{corollary}
	\label{cor:totalcom}
	Suppose Assumption~\ref{assume:problemsetup} holds and Algorithm~\ref{alg:ipganormal} uses Algorithm~\ref{alg:restartedAPG} at Step~4. Let $\tau=2L_f$. For any $0<\varepsilon <\min\{\sqrt{3 L_f},1\}$, the following statements hold.
	\begin{enumerate}	
		\item[\textnormal{(a)}] If Assumption~\ref{ass:dual2} holds,  let $\mu_k= \widetilde{\mu}_k$  for all $k\ge 0$ with $\widetilde{\mu}_k$ given in \eqref{eq:delta}. Then, Algorithm~\ref{alg:ipganormal} finds an $\varepsilon$-KKT point of problem~\eqref{eq:model-spli} 
		with total number 
		\begin{eqnarray}
			\label{eq:complexity_stronglyconvex}
			\mathcal O\left({\kappa([\overline{\vA}; \vA])} \log\left({\textstyle  \frac{1}{\varepsilon}}\right)\frac{L_f\Delta_F}{\varepsilon^2}\right)  
		\end{eqnarray}	
		of inner iterations (i.e, APG steps),
		where $\Delta_F=F(\vx^{(0)}, \vy^{(0)})-\inf_{\vx,\vy}F(\vx, \vy)$.	
		\item[\textnormal{(b)}] If Assumption~\ref{ass:polyhedralg} holds and, in addition, $\overline{\vb}=\mathbf{0}, \vb=\mathbf{0}$,  let $\mu_k= \widetilde{\mu}_k$  for all $k\ge 0$ with $\widetilde{\mu}_k$ given in \eqref{eq:delta}. Then,
		the same conclusion in statement \textnormal{(a)} holds except that the total number of inner iterations becomes
		\begin{eqnarray} 
			\mathcal O\left(\sqrt{\lambda_{\max}([\overline{\vA}; \vA] [\overline{\vA}; \vA]\zz)\theta^2(\overline{\vA},\vA,\vC)}\log\left({\textstyle\frac{1}{\varepsilon}}\right)\frac{L_f\Delta_F}{\varepsilon^2}\right),
			\label{eq:complexity_polyhedral_nob}
		\end{eqnarray}	
		where $\theta(\overline{\vA},\vA,\vC)$ is the Hoffman constant given in~\eqref{eq:defHoffman_small}.		
		\item[\textnormal{(c)}]
		If Assumption~\ref{ass:polyhedralg} holds and, in addition, {the level set $\{(\vx, \vy): F(\vx,\vy) \le F(\vx^{(0)}, \vy^{(0)}) +1\}$ is compact}, 
		  let $\mu_k= \frac{1}{(k+1)^2}\widetilde{\mu}_k  $  for all $k\ge 0$ with $\widetilde{\mu}_k$ given in \eqref{eq:delta}. 
		  There exists $B_5$ and $B_6$, which are constants independent of $\varepsilon$, such that 
		  \begin{align}
		  	\label{eq:boundgAx}
		  	\|\vnu^{(k)}\|^2\leq B_5,\quad \mathcal{D}_k(\vz^{\text{ini}})-\mathcal{D}_k^*\leq B_6,~~~\forall\, 0\le k< K_{\epsilon}, 
		  \end{align}
		  with $\vnu^{(k)}$ defined in~\eqref{eq:tk}.
		  Then,
		the same conclusion in statement \textnormal{(a)} holds except that the total number of inner iterations becomes
		\begin{eqnarray} 
		\mathcal O\bigg(\sqrt{\lambda_{\max}([\overline{\vA}; \vA] [\overline{\vA}; \vA]\zz)\theta^2(\overline{\vA},\vA,\overline\vb,\vb,\vC)(1+B_6/\tau+2 B_5)}\log\left({\textstyle \frac{ B_6}{\varepsilon}}\right)\frac{L_f\Delta_F}{\varepsilon^2} \bigg),
			\label{eq:complexity_polyhedral}
		\end{eqnarray}
		where $ \theta(\overline{\vA},\vA,\overline\vb,\vb,\vC)$ is the Hoffman constant given in~\eqref{eq:defHoffman}.
	\end{enumerate}	
\end{corollary}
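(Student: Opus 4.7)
The overall plan is to multiply the outer iteration bound from Theorem~\ref{thm:allcomple}, namely $K_\varepsilon = \mathcal{O}(L_f \Delta_F / \varepsilon^2)$, by an upper bound on the number of inner APG steps $j_k \cdot i_k$ needed at each outer step $k$ to produce $\vz^{(k+1)}$ satisfying~\eqref{eq:boundz-2}. The accuracy requirement $\mu_k$ is chosen according to~\eqref{eq:delta}, which is a polynomial of $\varepsilon$ (possibly multiplied by $(k+1)^{-2}$ in part~(c)), and this will convert every factor $\log(1/\mu_k)$ in the inner-iteration bounds into $\mathcal{O}(\log(1/\varepsilon))$ since $k \le K_\varepsilon = \mathcal{O}(\varepsilon^{-2})$.

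For part~(a), Assumption~\ref{ass:dual2} and Theorem~\ref{thm:inner-iter-dual2} give $j_k = \mathcal{O}(\kappa([\overline{\vA};\vA]))$ and $i_k = \mathcal{O}(\log(1/\mu_k))$, so each outer step needs $\mathcal{O}(\kappa([\overline{\vA};\vA]) \log(1/\varepsilon))$ APG steps, yielding~\eqref{eq:complexity_stronglyconvex}. For part~(b), Theorem~\ref{thm:complesub} with $\overline\vb = \vzero$ and $\vb = \vzero$ gives $\rho_k = [\theta^2(\overline\vA,\vA,\vC)\tau]^{-1}$, so $L_\mathcal{D}/\rho_k = \lambda_{\max}([\overline\vA;\vA][\overline\vA;\vA]\zz)\theta^2(\overline\vA,\vA,\vC)$ (since $L_\mathcal{D} = \lambda_{\max}(\cdot)/\tau$), and thus $j_k = \mathcal{O}(\sqrt{\lambda_{\max}\theta^2})$ and $i_k = \mathcal{O}(\log(1/\mu_k))$; multiplying by $K_\varepsilon$ yields~\eqref{eq:complexity_polyhedral_nob}.

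Part~(c) is where the main obstacle lies: I need to establish the uniform bounds~\eqref{eq:boundgAx}, since $\rho_k$ now depends on $\|\vnu^{(k)}\|^2$ and $\mathcal{D}_k(\vz^{\text{ini}}) - \mathcal{D}_k^*$, both of which involve $\vx^{(k)}$ and $\nabla f_0(\vx^{(k)})$. The strategy is to exhibit an upper bound on $\|\vx^{(k)}\|$ that is independent of $\varepsilon$ by using the compact level set assumption together with the rescaled choice $\mu_k = \widetilde\mu_k/(k+1)^2$. Concretely, revisiting~\eqref{eq:dec-obj} shows that $f_0(\vx^{(k+1)}) + g(\vy^{(k+1)}) - f_0(\vx^{(k)}) - g(\vy^{(k)})$ is bounded above by a term proportional to $\mu_k + \mu_k^2$, which is now summable over $k \ge 0$. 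Combining this with the Lipschitz bound $|g(\vy^{(k+1)}) - g(\overline\vA\vx^{(k+1)}+\overline\vb)| \le l_g B_1\mu_k$ (also summable) lets me control $F_0(\vx^{(k+1)})$ by $F_0(\vx^{(0)}) + C$ for some absolute constant $C < 1$ uniformly in $k$, so each $\vx^{(k)}$ lies in the compact level set $\{F \le F(\vx^{(0)},\vy^{(0)}) + 1\}$. Compactness then immediately bounds $\|\nabla f_0(\vx^{(k)})\|$ and $\|\vx^{(k)}\|$, hence $\|\vnu^{(k)}\| = \|[\overline\vA;\vA]\zz \overline\vz^{(k+1)} + \nabla f_0(\vx^{(k)}) - \tau\vx^{(k)}\|$ (using $\|\overline\vz_1^{(k+1)}\| \le l_g$ from Lemma~\ref{lem:boundxyz-2} and the closed form~\eqref{eq:formula-z-2-k+1} for $\overline\vz_2^{(k+1)}$), giving the constant $B_5$; and $\mathcal{D}_k(\vz^{\text{ini}}) - \mathcal{D}_k^*$ is bounded by a constant $B_6$ by choosing $\vz^{\text{ini}}$ to lie in a fixed bounded subset of $\dom(g^\star)$ and using the explicit form of $\mathcal{D}_k$.

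With~\eqref{eq:boundgAx} in hand, Theorem~\ref{thm:complesub} yields $\sqrt{L_\mathcal{D}/\rho_k} = \mathcal{O}(\sqrt{\lambda_{\max}\theta^2(1+B_6/\tau+2B_5)})$ and $i_k = \mathcal{O}(\log(B_6/(\rho_k \mu_k^2))) = \mathcal{O}(\log(B_6/\varepsilon))$ (absorbing the $(k+1)^4$ factor into the log, since $k \le K_\varepsilon$ and $\log(K_\varepsilon^4) = \mathcal{O}(\log(1/\varepsilon))$). Multiplying $j_k \cdot i_k$ by $K_\varepsilon = \mathcal{O}(L_f \Delta_F/\varepsilon^2)$ produces~\eqref{eq:complexity_polyhedral}. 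The correctness of the $\varepsilon$-KKT guarantee itself follows immediately from Theorem~\ref{thm:allcomple}, as the faster-decaying $\mu_k$ in part~(c) only strengthens the original condition~\eqref{eq:delta}.
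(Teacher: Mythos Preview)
Your overall strategy matches the paper's: multiply the outer-iteration count $K_\varepsilon$ from Theorem~\ref{thm:allcomple} by a per-step inner bound $i_k j_k$ coming from Theorems~\ref{thm:inner-iter-dual2} and~\ref{thm:complesub}. The treatment of part~(c) is also essentially the paper's argument (summing~\eqref{eq:dec-obj} with the summable tolerances $\mu_k=\widetilde\mu_k/(k+1)^2$ to trap the iterates in the compact level set), though the detour you take through $F_0$ is unnecessary---summing~\eqref{eq:dec-obj} already gives $F(\vx^{(K)},\vy^{(K)})\le F(\vx^{(0)},\vy^{(0)})+1$ directly, which is exactly the level set you need.

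There is, however, a genuine gap in your treatment of parts~(a) and~(b). You assert $i_k=\mathcal{O}(\log(1/\mu_k))$, but the lower bounds on $i_k$ in Theorems~\ref{thm:inner-iter-dual2} and~\ref{thm:complesub} also contain the initial dual gap $\mathcal{D}_k(\vz^{\text{ini}})-\mathcal{D}_k^*$, which depends on $\vx^{(k)}$ through the formula for $\mathcal{D}_k$. You never bound this quantity for (a)--(b). The paper fills this in by invoking the a-priori iterate bound $\|\vx^{(k)}\|=\|\vx^{(0)}\|+\mathcal{O}(\varepsilon^{-1})$ from Remark~\ref{rem:1} (Cauchy--Schwarz on the summed squared steps~\eqref{eq:Kepsilon1}), which in turn bounds $l_f^k$, then $\|\overline\vz^{(k+1)}\|$ via~\eqref{eq:formula-z-2-k+1}, and finally uses convexity of $\mathcal{D}_k$ to obtain $\mathcal{D}_k(\vz^{\text{ini}})-\mathcal{D}_k^*=\mathcal{O}(\varepsilon^{-2})$; only then does $\log$ of the full $i_k$ expression collapse to $\mathcal{O}(\log(1/\varepsilon))$. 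Without this step your inner-iteration count for (a) and (b) is not justified---$\mathcal{D}_k(\vz^{\text{ini}})-\mathcal{D}_k^*$ could in principle grow with $k$ and spoil the logarithmic bound.
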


	\begin{proof}
		According to Theorem~\ref{thm:allcomple}, Algorithm~\ref{alg:ipganormal} needs at most $K_\varepsilon$ outer iterations to find an $\varepsilon$-KKT point of problem~\eqref{eq:model-spli}, where $K_\varepsilon$ is given in~\eqref{eq:set-K-eps}. Moreover, in each outer iteration of  Algorithm~\ref{alg:ipganormal}, Algorithm~\ref{alg:restartedAPG} needs $i_kj_k$ APG steps to find a point $\vz^{(k+1)}$  satisfying~\eqref{eq:boundz-2} with $i_k$ and $j_k$ specified in  Theorems~\ref{thm:inner-iter-dual2} and~\ref{thm:complesub} for different assumptions.  
		Hence, the total number of inner iterations by Algorithm~\ref{alg:ipganormal} is $\sum_{k=0}^{K_\varepsilon-1}i_kj_k$.
		
		To  prove statements (a) and (b), we only need to derive an upper bound for $\mathcal{D}_k(\vz^{\text{ini}})-\mathcal{D}_k^*$ that appears in the formula of $i_k$ in  Theorems~\ref{thm:inner-iter-dual2} and~\ref{thm:complesub}. First, we observe that, for any $k\leq K_\vareps $, the inequalities \eqref{eq:xkx01}--\eqref{eq:xkx02} hold.
		 Second,  from $\|\bar{\vz}_1^{(k+1)}\|\leq l_g$, \eqref{eq:formula-z-2-k+1},~\eqref{eq:xkx01}, and the $l_f$-Lipschitz continuity of $f_0$, we have
		 \begin{align}\label{eq:bd-bar-z2}
		 	\|\overline{\vz}_2^{(k+1)}\| \le &l_g \left\| \left( \vA \vA\zz\right)^{-1}\vA\overline{\vA}\zz\right\| + \left(l^k_f + \tau \|\vx^{(k)}\| \right)\left\|\left( \vA \vA\zz\right)^{-1} \vA\right\| + \tau \left\|\left( \vA \vA\zz\right)^{-1} \vb\right\| = : B^k_{\vz}.
		 \end{align} 
		 Using the bound of $\|\vx^{(k)}\|$ and $l_f^k$ from \eqref{eq:xkx01}--\eqref{eq:xkx02}, and retaining only the key quantity  $1/\varepsilon$, 
		 we conclude that $B^k_{\vz}=\mathcal{O}(\vareps^{-1})$ 
		 for all $0\le k \le K_{\vareps}$.
		Moreover,  for any $\vz$, we have
		\begin{align}
			\notag
			\|\nabla (\mathcal{D}_k -  g^\star)(\vz)\| = &~\left\|\frac{1}{\tau}\left([\bar \vA; \vA] \big([\bar \vA; \vA] ^\top \vz +\nabla f_0(\vx^{(k)})-\tau \vx^{(k)}\big)\right)- [\bar\vb; \vb]\right\| \\
			\le &~\frac{1}{\tau} \|[\overline \vA; \vA]\|^2 \cdot \|\vz\| + \frac{1}{\tau}\|[\overline \vA; \vA]\|\left(l^k_f + \tau \|\vx^{(k)}\| \right) + \|[\overline\vb; \vb]\| =: \widehat{\mathcal{D}}_k(\vz).
			\label{eq:bd-bar-z2-k+13}
		\end{align}
		Hence, it follows from the convexity of $\mathcal{D}_k$ that for any $\vxi \in \partial  g^\star(\vz^{\text{ini}})$,  
		\begin{align}
			\label{eq:boundforDini}
				\mathcal{D}_k(\vz^{\text{ini}})-\mathcal{D}_k^*\leq \left\langle \nabla (\mathcal{D}_k -  g^\star)(\vz^{\text{ini}}) + \vxi, \vz^{\text{ini}}-\bar\vz^{(k+1)}\right\rangle 
			\leq \big(\widehat{\mathcal{D}}_k(\vz^{\text{ini}}) + \|\vxi\|\big) \left(\|\vz^{\text{ini}}\| + l_g + B_\vz^k\right) {:=B_\cD^k}.
		\end{align}
			Now again using the bounds on $\|\vx^{(k)}\|$ and $l_f^k$ from \eqref{eq:xkx01}--\eqref{eq:xkx02} and keeping only the key quantity $1/\varepsilon$, we have $B^k_{\vz}=\mathcal{O}(\vareps^{-1})$ and $\widehat{\mathcal{D}}_k(\vz^{\text{ini}})=\mathcal{O}(\vareps^{-1})$ for all $0\le k \le K_{\vareps}$. 
		Hence, $	\mathcal{D}_k(\vz^{\text{ini}})-\mathcal{D}_k^* = \mathcal{O}(\vareps^{-2})$.
		Applying the bound of $\mathcal{D}_k(\vz^{\text{ini}})-\mathcal{D}_k^*$ to the $i_k$'s in Theorems~\ref{thm:inner-iter-dual2} and~\ref{thm:complesub}, noticing $\log\frac{1}{\mu_k} = \mathcal{O}(\log\frac{1}{\varepsilon})$ from Remark~\ref{rem:1}, and using \eqref{def:L_D}, we directly obtain the complexity results in~\eqref{eq:complexity_stronglyconvex} and~\eqref{eq:complexity_polyhedral_nob}.
		
		To prove statement (c), we notice that in this case,  $i_k$ and $j_k$ are given in  Theorem~\ref{thm:complesub} and they depend on $\rho_k$ in~\eqref{eq:muk} which further depends on $\|\vnu^{(k)}\|$ and $\mathcal{D}_k(\vz^{\text{ini}})-\mathcal{D}_k^*$.
		 {Summing \eqref{eq:dec-obj} for all $k=1,2,\ldots,K-1$, using $\mu_k= \frac{1}{(k+1)^2}\widetilde{\mu}_k  $ and $\epsilon^2 \leq 3 L_f$,   we obtain the bound 
		\begin{align*}
	 	F(\vx^{(K)}, \vy^{(K)}) \leq &  F(\vx^{(0)}, \vy^{(0)}) + \sum_{k=0}^{K-1}\frac{\varepsilon^2}{12  {(k+1)^2}  L_f}
			\le  F(\vx^{(0)}, \vy^{(0)}) + \frac{2\varepsilon^2}{6 L_f}  
			\le  F(\vx^{(0)}, \vy^{(0)}) + 1, 
		\end{align*}
		for all $ 0< K \le K_{\vareps}. $
		Since $\{(\vx, \vy): F(\vx,\vy) \le F(\vx^{(0)}, \vy^{(0)}) +1\}$ is compact,
		it follows that $\{\vx^{(k)}\}_{k=0}^{K_{\vareps}}$ is bounded, i.e., there is a constant $B_\vx>0$ independent of $\varepsilon$ such that $\|\vx^{(k)}\| \le B_\vx$}. 
		Consequently, we have $l_f^k=\|\nabla f_0(\vx^{(k)})\|=\mathcal{O}(1)$, $\|\overline{\vz}^{(k+1)}\|=\mathcal{O}(1)$, where the latter follows from $\|\bar{\vz}_1^{(k+1)}\|\leq l_g$ and \eqref{eq:formula-z-2-k+1}. Finally, the existence of $B_5, B_6$ is then verified by the definition of $\vnu^{(k)}$ in  \eqref{eq:tk}, and the bound in \eqref{eq:boundforDini}. 
		Using~\eqref{eq:set-K-eps}, \eqref{def:L_D}, \eqref{eq:boundgAx} and the fact that $\log\frac{1}{\mu_k} = \mathcal{O}(\log\frac{1}{\varepsilon})$ from Remark \ref{rem:1}. We can bound $i_k$ and $j_k$ from above and obtain~\eqref{eq:complexity_polyhedral}. The proof is then completed.
	\end{proof}
	
	In the following corollary, we also derive from Theorems~\ref{thm:allcompleforp},~\ref{thm:inner-iter-dual2} and~\ref{thm:complesub} the required total number of inner iterations for obtaining a $(\delta,\varepsilon)$-KKT point of~\eqref{eq:model}. We skip its proof because it is essentially the same as that of Corollary~\ref{cor:totalcom}.
	
	\begin{corollary}
		\label{cor:totalcom2}
		Suppose Assumptions~\ref{assume:problemsetup} holds and Algorithm~\ref{alg:ipganormal} uses Algorithm~\ref{alg:restartedAPG} at Step~4.  Let $\tau=2L_f$.  For any $0<\varepsilon <\min\{\sqrt{3 L_f},1\}$, the following statements hold.
		\begin{enumerate}	
			\item[\textnormal{(a)}] If Assumption~\ref{ass:dual2} holds, let $\mu_k= \overline{\mu}_k$  for all $k\ge 0$ with $\overline{\mu}_k$ given in \eqref{eq:delta2}. Then, Algorithm~\ref{alg:ipganormal} finds a $(\delta, \varepsilon)$-KKT point of problem~\eqref{eq:model} with total number  
			\begin{eqnarray}
				\label{eq:complexity_stronglyconvex2} 
				\mathcal O\left({\kappa([\overline{\vA}; \vA])} \log\left({\textstyle\frac{1}{\delta\varepsilon}}\right) \frac{L_f\Delta_{F_0}}{\varepsilon^2}\right) 
			\end{eqnarray}	
			of inner iterations (i.e, APG steps), where $\Delta_{F_0}=F_0(\vx^{(0)})-\inf_{\vx}F_0(\vx)$.	
			\item[\textnormal{(b)}] If Assumption~\ref{ass:polyhedralg} holds and, in addition, $\overline{\vb}=\mathbf{0}, \vb=\mathbf{0}$,  let $\mu_k= \overline{\mu}_k$  for all $k\ge 0$ with $\overline{\mu}_k$ given in \eqref{eq:delta2}. Then, 
			the same conclusion in statement \textnormal{(a)} holds except that the total number of inner iterations  becomes
			\begin{eqnarray}
				\notag 
				\mathcal O\left(\sqrt{\lambda_{\max}([\overline{\vA}; \vA] [\overline{\vA}; \vA]\zz)\theta^2(\overline{\vA},\vA,\vC)}\log\left({\textstyle\frac{1}{ \delta\varepsilon}}\right) \frac{L_f\Delta_{F_0}}{\varepsilon^2}\right),
				\label{eq:complexity_polyhedral_nob2}
			\end{eqnarray}	
			where $\theta(\overline{\vA},\vA,\vC)$ is the Hoffman constant given in~\eqref{eq:defHoffman_small}.		
			\item[\textnormal{(c)}]
			If Assumption~\ref{ass:polyhedralg} holds and, in addition, 
			 {the level set $\{(\vx, \vy): F(\vx,\vy) \le F(\vx^{(0)}, \vy^{(0)}) +1\}$ is compact},  let $\mu_k= \frac{1}{(k+1)^2}\overline{\mu}_k $  for all $k\ge 0$ with $\overline{\mu}_k$ given in \eqref{eq:delta2}. There exists $B_5$ and $B_6$, which are constants independent of $\varepsilon$, such that 
			 \begin{align} 
			 	\|\vnu^{(k)}\|^2\leq B_5,\quad \mathcal{D}_k(\vz^{\text{ini}})-\mathcal{D}_k^*\leq B_6,~~~\forall\, 0\le k< \overline K_{\epsilon}, 
			 \end{align}
			 with $\vnu^{(k)}$ defined in~\eqref{eq:tk}. Then, the same conclusion in statement \textnormal{(a)} holds except that the total number of inner iterations  becomes
			\begin{eqnarray}
				\notag 
				\mathcal O\bigg(\sqrt{\lambda_{\max}([\overline{\vA}; \vA] [\overline{\vA}; \vA]\zz)\theta^2(\overline{\vA},\vA,\overline\vb,\vb,\vC)(1+B_6/\tau+2 B_5)}\log\left({\textstyle \frac{ B_6}{\delta\varepsilon}}\right) \frac{L_f\Delta_{F_0}}{\varepsilon^2} \bigg),
				\label{eq:complexity_polyhedral2}
			\end{eqnarray}
			where $ \theta(\overline{\vA},\vA,\overline\vb,\vb,\vC)$ is the Hoffman constant given in~\eqref{eq:defHoffman}.
		\end{enumerate}	
	\end{corollary}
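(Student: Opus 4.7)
The plan is to replay the proof of Corollary~\ref{cor:totalcom} almost verbatim, substituting Theorem~\ref{thm:allcompleforp} for Theorem~\ref{thm:allcomple} and $\overline\mu_k$ for $\widetilde\mu_k$, and then tracking how the extra term $\tau\delta/\|[\overline\vA;\vA]\|$ in~\eqref{eq:delta2} propagates into the logarithmic factor. By Theorem~\ref{thm:allcompleforp}, once $\vx^{(k'+1)}$ is a $(\delta,\varepsilon)$-KKT point of~\eqref{eq:model}, the outer iteration count is bounded by $\overline K_\varepsilon=\mathcal O(L_f\Delta_{F_0}\varepsilon^{-2})$, so the total inner work is $\sum_{k=0}^{\overline K_\varepsilon-1} i_k j_k$, with $i_k,j_k$ supplied by Theorem~\ref{thm:inner-iter-dual2} under Assumption~\ref{ass:dual2} and by Theorem~\ref{thm:complesub} under Assumption~\ref{ass:polyhedralg}.

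For each case, the outer factor $j_k$ is independent of $k$: in case (a) it equals $\lceil 2\sqrt2\,\kappa([\overline\vA;\vA])\rceil$, and in cases (b)--(c) it equals $\lceil 2\sqrt{2L_{\cD}/\rho_k}\rceil$ with $\rho_k$ as in~\eqref{eq:muk} and $L_{\cD}$ as in~\eqref{def:L_D}. The inner factor $i_k=\lceil\log_2(2(\mathcal D_k(\vz^{\text{ini}})-\mathcal D_k^*)/(\rho_k\mu_k^2))\rceil$ (or with $\kappa_{\cD}$ in place of $\rho_k$). To bound it, I will invoke Remark~\ref{rem:2}, which transports the $\ell^k_f,\|\vx^{(k)}\|=\mathcal O(\varepsilon^{-1})$ bounds from Remark~\ref{rem:1} to the range $0\le k\le \overline K_\varepsilon$, and then reuse the derivation~\eqref{eq:bd-bar-z2}--\eqref{eq:boundforDini} verbatim to get $\mathcal D_k(\vz^{\text{ini}})-\mathcal D_k^*=\mathcal O(\varepsilon^{-2})$ in cases (a) and (b). Combined with $\log(1/\overline\mu_k)=\mathcal O(\log(1/(\delta\varepsilon)))$, which is where the extra $\log(1/\delta)$ factor enters relative to Corollary~\ref{cor:totalcom}, multiplying $i_k j_k$ by $\overline K_\varepsilon$ yields~\eqref{eq:complexity_stronglyconvex2} and the analogue for (b).

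For case (c), I will additionally carry out the telescoping argument from the proof of Corollary~\ref{cor:totalcom}(c), summing~\eqref{eq:dec-obj} with $\mu_k=\overline\mu_k/(k+1)^2$ and using $\varepsilon^2\le 3L_f$ to conclude that $\{(\vx^{(k)},\vy^{(k)})\}_{k=0}^{\overline K_\varepsilon}$ remains inside the compact level set $\{F\le F(\vx^{(0)},\vy^{(0)})+1\}$. Once boundedness of $\{\vx^{(k)}\}$ is in hand, $\ell^k_f=\mathcal O(1)$ and $\|\overline\vz^{(k+1)}\|=\mathcal O(1)$ follow from~\eqref{eq:formula-z-2-k+1} and $\|\overline\vz_1^{(k+1)}\|\le l_g$, which in turn makes $\|\vnu^{(k)}\|$ and $\mathcal D_k(\vz^{\text{ini}})-\mathcal D_k^*$ both $\mathcal O(1)$, giving the constants $B_5,B_6$ independent of $\varepsilon$. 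Plugging these into $\rho_k$, $j_k$, and $i_k$ and summing produces the claimed bound, again with $\log(B_6/(\delta\varepsilon))$ replacing $\log(B_6/\varepsilon)$.

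The only genuinely new point compared with Corollary~\ref{cor:totalcom} is checking that the additional constraints $\mu_k\le \tau\delta/\|[\overline\vA;\vA]\|$ and $\mu_k\le \varepsilon/(6\|[\overline\vA;\vA]\|)$ in~\eqref{eq:delta2} do not change the asymptotic order of $\log(1/\mu_k)$ beyond the extra $\log(1/\delta)$; since $\delta$ enters only polynomially (and typically as a polynomial of $\varepsilon$), this is immediate. I expect no real obstacle: the main iteration-count analysis is inherited from Theorem~\ref{thm:allcompleforp}, the inner-loop bounds from Theorems~\ref{thm:inner-iter-dual2}--\ref{thm:complesub} apply for any $\mu_k>0$, and the boundedness arguments from Remarks~\ref{rem:1}--\ref{rem:2} and the telescoping in case (c) transfer without modification.
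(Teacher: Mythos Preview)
Your proposal is correct and follows exactly the approach the paper takes: the paper explicitly omits the proof, stating it ``is essentially the same as that of Corollary~\ref{cor:totalcom},'' and the accompanying Remark confirms that the only changes are the use of $\overline K_\varepsilon$ from~\eqref{eq:set-K-eps2} and the observation $\log(\mu_k^{-1})=\mathcal O(\log(\delta^{-1}\varepsilon^{-1}))$ from~\eqref{eq:delta2} and Remark~\ref{rem:2}. Your identification of Theorem~\ref{thm:allcompleforp} replacing Theorem~\ref{thm:allcomple}, the transport of the boundedness arguments via Remark~\ref{rem:2}, and the verbatim reuse of the telescoping in case~(c) are all exactly right.
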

	
	\begin{remark}
		Note that the key step in deriving the desired results in Corollary~\ref{cor:totalcom2} relies only on the choice of $\overline{K}_{\varepsilon}$ in \eqref{eq:set-K-eps2}, and
		\( \mathcal{O}(\log(\mu_k^{-1})) = \mathcal{O}(\log(\delta^{-1}\varepsilon^{-1})) \)
		from \eqref{eq:delta2} and Remark \ref{rem:2}. 		
		Building on the complexity results in Corollary~\ref{cor:totalcom2}, by setting
		$\delta=\varepsilon$,
		we can achieve an \((\varepsilon,\varepsilon)\)-KKT point of problem~\eqref{eq:model} using the same oracle complexity. 		
		Furthermore, by choosing a sufficiently small \(\delta\), one can find a point that is arbitrarily close to an \(\varepsilon\)-KKT point of problem~\eqref{eq:model}, still within the same oracle complexity bound given in Corollary~\ref{cor:totalcom2}.
	\end{remark}
	
	\subsection{A computable value of $i_k$ and $j_k$ under Assumption \ref{ass:polyhedralg}}\label{sec:com}
		In this subsection, we give computable values of $i_k$ and $j_k$ that satisfy the requirement in Theorem~\ref{thm:complesub}. 

Define	
\begin{equation}\label{eq:bd-rhok}
		B^k_{\rho}= \left\{
		\begin{aligned}
			  & \tau+B_{\mathcal{D }}^k + 4\tau
				\left(  \left\|[\overline{\vA}; \vA] \right\|^2 \left(l_g + B_{\vz}^k\right)^2 +  (l_f^k + \tau \|\vx^{(k)}\|)^2  \right),&\text{if }\overline{\vb}\neq\vzero\text{ or } \vb\neq\vzero,
			\\
			&\tau, &\text{if }\overline{\vb}=\vzero\text{ and } \vb=\vzero.
		\end{aligned}
		\right.
	\end{equation}	
Then from the definition of $\rho_k$ in  \eqref{eq:muk} and $\vnu^{(k)}$ in \eqref{eq:tk} and using \eqref{eq:bd-bar-z2}--\eqref{eq:boundforDini}, we have \mbox{$\frac{1}{\rho_k\theta^2(\overline{\vA},\vA,\overline\vb,\vb,\vC)} \leq B^k_{\rho}$.}
	Hence, from \eqref{eq:boundforDini} and \eqref{eq:bd-rhok}, to satisfy the condition in Theorem~\ref{thm:complesub}, we can set $j_k$ and $i_k$ to
	\begin{equation}
		\label{eq:ikk2}
		\small
		j_k=\left\lceil 2\sqrt{2B^k_{\rho}\theta^2(\overline{\vA},\vA,\overline\vb,\vb,\vC) L_\mathcal{D}}\right\rceil,\  
		i_k = \left\lceil \log_2\left(\frac{2B^k_{\rho}\theta^2(\overline{\vA},\vA,\overline\vb,\vb,\vC)B_{\cD}^k}{ \mu_k^2}\right)\right\rceil,
	\end{equation} 
	which are explicitly computable, provided the Hoffman parameter $\theta(\overline{\vA},\vA,\overline\vb,\vb,\vC)$. 
	
	Without prior knowledge of \( \theta(\overline{\vA},\vA,\overline\vb,\vb,\vC) \), we can start from 
	a (possibly lower) estimate \( \theta_0>0\) for this parameter and adopt an adaptive estimation strategy to refine the estimate. 
	Such a strategy has been successfully applied in prior works. Notably, Nesterov~\cite{nesterov2013gradient} employs the strategy to estimate an unknown strong convexity parameter, and Davis~$\&$~Grimmer~\cite{davis2019proximally} apply it to estimate an unknown weak convexity parameter.
	
	Specifically, we begin
	by running Algorithm~\ref{alg:ipganormal}, which invokes Algorithm~\ref{alg:restartedAPG} at Step 4, where \( j_k \) and \( i_k \) are set according to~\eqref{eq:ikk2} with \( \theta(\overline{\vA}, \vA, \overline\vb, \vb, \vC) \) replaced by \( \theta_0 \). 
	We terminate Algorithm~\ref{alg:ipganormal}, if for some $K>0$, either \eqref{eq:checkstop} or \eqref{eq:checkstop2} given below is satisfied:
	\begin{equation}
		\label{eq:checkstop}
		{3L_f}{} \left\|\vx^{(K)}-{\vx}^{(K-1)}\right\|^2\le \vareps^2,\,\,  \|\vy^{(K)} - (\overline{\vA}\vx^{(K)} +\overline{\vb})\| \le\vareps\min\big\{\frac{1}{\sigma},1\big\}, \,\, \left\|\mathbf{A} \mathbf{x}^{(K)}+\mathbf{b}\right\| \leq\varepsilon,
	\end{equation}
	\begin{equation}
		\label{eq:checkstop2}
		\frac{6L_f\left(F(\vx^{(0)}, \vy^{(0)})-F(\vx^{(K)}, \vy^{(K)})\right)}{K } \le \frac{\vareps^2}{2}.
	\end{equation}
	Upon termination, if condition~\eqref{eq:checkstop} holds, then by \eqref{eq:xerror-2}, $(\vx^{(K)}, \vy^{(K)})$ is an 
	$\vareps$-KKT point of problem~\eqref{eq:model-spli}. 
	Otherwise, if condition~\eqref{eq:checkstop2} holds while~\eqref{eq:checkstop} does not, 
	we update our estimate for the Hoffman parameter by doubling its value, i.e., let $
	\theta_1 = 2\theta_0.
	$
	We then rerun Algorithm~\ref{alg:ipganormal}, which invokes Algorithm~\ref{alg:restartedAPG} at Step 4, where \( j_k \) and \( i_k \) are set according to~\eqref{eq:ikk2} with \( \theta(\overline{\vA}, \vA, \overline\vb, \vb, \vC) \) replaced by \( \theta_1 \).
	This adaptive estimation strategy continues until \eqref{eq:checkstop} holds. 
	Notice that once 
	{$
		\theta_t > \theta(\overline{\vA}, \vA, \overline\vb, \vb, \vC),
		$}
	and $i_k$ and $j_k$ 
	are set according to~\eqref{eq:ikk2} with \( \theta(\overline{\vA}, \vA, \overline\vb, \vb, \vC) \) replaced by \( \theta_t\), 
	then~\eqref{eq:boundz-2} must hold. Consequently, the condition in~\eqref{eq:checkstop} must eventually be satisfied before~\eqref{eq:checkstop2} holds. This follows because, if~\eqref{eq:checkstop2} is satisfied, then by~\eqref{eq:stationarybound}, together with~\eqref{eq:feasibility},~\eqref{eq:kktvy-2vio} and~\eqref{eq:deltaep}, it necessarily implies that~\eqref{eq:checkstop} must hold for some $0\le k< K$.  
	Notice that {when $K\ge K_\varepsilon$, with $K_\varepsilon$ given in \eqref{eq:set-K-eps}, the condition in \eqref{eq:checkstop2} must hold. Hence,} the total number of inner iterations of $t$-th running {with $\theta_t$ as the estimate of the Hoffman constant} is at most 
	\begin{equation}
		\label{eq:totalguess}
		\sum_{k=0}^{K_\varepsilon-1}i_kj_k = \sum_{k=0}^{K_\varepsilon-1}
		2\theta_t \sqrt{2B^k_{\rho}  L_\mathcal{D}} i_k \sim \theta_t  \sum_{k=0}^{K_\varepsilon-1}
		2\sqrt{2B^k_{\rho}  \lambda_{\max}([\overline{\vA}; \vA] [\overline{\vA}; \vA]\zz)}  \log( \theta(\overline{\vA}, \vA, \overline\vb, \vb, \vC)\vareps^{-1}).
	\end{equation}  
	
	For the case in Corollary \ref{cor:totalcom}(b), it holds $B^k_{\rho}=\tau$ by  the  definition in \eqref{eq:bd-rhok}. Summing \eqref{eq:totalguess} over 
	$t=0,1,\ldots, \lceil \log_2(\theta(\overline{\vA}, \vA, \overline\vb, \vb, \vC)/ \theta_0) \rceil,$ and comparing the resulting term to the complexity bound in Corollary~\ref{cor:totalcom}(b), we deduce that this strategy introduces an additional complexity factor:
	\mbox{$\sum {\theta_t}/{\theta(\overline{\vA}, \vA, \overline\vb, \vb, \vC)} \leq 4 $}. Thus, the overall complexity bound remains in the same order.
	
	For the case in Corollary \ref{cor:totalcom}(c), we have $ {B}_{\rho}^k=\mathcal{O}(1)$ for all $0\le k < K_\varepsilon$. 
	 Similarly,  the overall complexity bound remains in the same order.

	\subsection{Oracle complexity matching the lower bound}
	As we have mentioned in Section \ref{sec:intro}, the lower bound to achieve an ($\varepsilon,\varepsilon$)-KKT point (see Definition \ref{def:eps-pt-P}) of problem~\eqref{eq:model} is  at least
	$\Omega({\kappa([\overline{\vA};\vA]) L_f \Delta_{F_0}} \varepsilon^{-2})$.
	Meanwhile, it is  established in \cite{liu2025lowercomplexityboundsfirstorder} that the lower bound to achieve an $\varepsilon$-KKT point (see Definition \ref{def:eps-pt-P}) of problem~\eqref{eq:model-spli} is at least
	$\Omega({\kappa([\overline{\vA};\vA]) L_f \Delta_{F}} \varepsilon^{-2})$.
	In this subsection, We  provide several remarks to point out that the oracle complexity results in Corollaries~\ref{cor:totalcom} and~\ref{cor:totalcom2} matches the lower bound of oracle complexity,  under the following two cases: (i)  Assumptions~\ref{assume:problemsetup} and~\ref{ass:dual2} hold; (ii) Assumptions~\ref{assume:problemsetup} and~\ref{ass:polyhedralg} hold and in addition, $\overline{\vb}=\mathbf{0}$ and $\vb=\mathbf{0}$.
	
	\begin{remark} \label{rem:matchsp}
		A few remarks follow on the oracle complexity of FOMs for solving problem~\eqref{eq:model-spli}. 
		\begin{enumerate}
			\item[\textnormal{(i)}] 
			If Assumptions~\ref{assume:problemsetup} and~\ref{ass:dual2} hold,
			we assert that the oracle complexity\footnote{The dependency of oracle complexity in~\eqref{eq:complexity_stronglyconvex} on $\Delta_{F}$, $l^k_f$ and $l_g$ is only logarithmic and has been suppressed in $\mathcal{O}(\cdot)$.} in~\eqref{eq:complexity_stronglyconvex} is not improvable (up to a logarithmic factor)  by comparing their dependency on $\varepsilon$, $L_f$ and $\Delta_F$ and $\kappa([\overline{\vA}; \vA])$. 
			
			\item[\textnormal{(ii)}] Suppose Algorithm~\ref{alg:ipganormal} is applied to the reformulation~\eqref{eq:model-spli} of instance~$\mathcal{P}$ given in \cite{liu2025lowercomplexityboundsfirstorder} (see Definition~\ref{def:hardinstance}). In particular, for instance~$\mathcal{P}$,  it holds that  $g(\vy)=\beta\|\vy\|_1=\max_{\|\vz_1\|_\infty\leq\beta}\vz_1^\top\vy$, and $\overline{\vb}=\mathbf{0}$, and $\vb=\mathbf{0}$. Thus,  Algorithm~\ref{alg:ipganormal} finds an $\varepsilon$-KKT point of~\eqref{eq:model-spli} with oracle complexity given in \eqref{eq:complexity_polyhedral_nob}. In this case, it holds that $g^\star(\vz_1)=\iota_{\|\vz_1\|_\infty\leq \beta}$, and  $\theta(\overline{\vA},\vA,\vC)$ becomes the Hoffman constant of the linear system
			$$
			\left\{\vz=(\vz_1\zz,\vz_2\zz)\zz\Bigg| 
			\begin{array}{c}
				[\overline{\vA}; \vA]\zz\vz=\vnu^{(k)}-\nabla f_0(\vx^{(k)})+\tau \vx^{(k)},\\
				-\beta\leq[\vz_{1}]_i\leq \beta, \forall i
			\end{array}  
			\right\}
			$$
			for some $\vnu^{(k)}$. According to Proposition 6 and the discussion on page 12 of~\cite{pena2021new}, we can define a reference polyhedron 
			$$
			\mathcal{R} = \left\{\vz=[\vz_1;\vz_2]:  -\beta\leq[\vz_{1}]_i\leq \beta, \forall i\right\}
			$$
			and characterize $\theta(\overline{\vA},\vA,\vC)$ as
			$$
			\frac{1}{\theta(\overline{\vA},\vA,\vC)}=\min_{\mathcal{K}\in\mathcal{S}}
			\min\limits_{\vu,\vv}
			\left\{\|\vu\|~\Big|~\|\vH^\top \vv\|= 1,~\vv\in \mathcal{K},~\vH\vH^\top \vv-\vu\in \mathcal{K}^*
			\right\},
			$$
			where $\vH$ is defined in~\eqref{eq:matrixAstar}, $\mathcal{K}^*$ is the dual cone of $\mathcal{K}$, and
			$$
			\mathcal{S}=\{\mathcal{K}: \mathcal{K} \text{ is a tangent cone of }\mathcal{R}\text{ at some point of }\mathcal{R}\text{, and }  \vH^\top \mathcal{K} \text{ is a linear space}\}.
			$$
			
			Next we show $\mathcal{S}=\{\mathbb{R}^{d}\}$. By the definition of $\mathcal{R}$, any tangent cone of $\mathcal{R}$ must look like 
			$$
			\mathcal{K}=\left\{\vz=[\vz_1;\vz_2]~\big|~[\vz_{1}]_i\geq 0, \forall i\in \mathcal{J}_1, [\vz_{1}]_j\leq 0, \forall j\in \mathcal{J}_2\right\}
			$$
			for some {disjoint} index sets $\mathcal{J}_1$ and $\mathcal{J}_2$. Suppose  $\mathcal{K}\in\cS$. 
			Then, for any $\vv\in \mathcal{K}$, we must have $-\vH^\top\vv\in \vH^\top \mathcal{K}$, namely, there exists $\vv'\in\mathcal{K}$ such that $-\vH^\top\vv=\vH^\top\vv'$. Since $\vH^\top$ has a full-column rank, $\vv+\vv'=\vzero$. This means for any $\vv\in \mathcal{K}$,  we must have $-\vv\in \mathcal{K}$. {Since $\mathcal{J}_1\cap\mathcal{J}_2=\emptyset$, this happens only if} $\mathcal{J}_1=\mathcal{J}_2=\emptyset$. Thus $\mathcal{K}=\mathbb{R}^{d}$ and $\mathcal{K}^*=\{\vzero\}$. 
			
			With this fact, we have
			$$
			\theta(\overline{\vA},\vA,\vC)=\frac{1}{\min\limits_{\vv: \|\vH^\top \vv\|= 1}\|\vH\vH^\top \vv\|}=\frac{1}{\sqrt{\lambda^+_{\min}(\vH\vH^\top)}}.
			$$ 
			Hence by Corollary~\ref{cor:totalcom}(b), the oracle complexity of Algorithm~\ref{alg:ipganormal} becomes
			$$
			\mathcal{O}\left(\sqrt{\frac{\lambda_{\max}(\vH \vH\zz)}{\lambda^+_{\min}(\vH \vH\zz)}}\log({\textstyle \frac{1}{\varepsilon}})\frac{L_f\Delta_F}{\varepsilon^2}\right)=
			\mathcal{O}\left(\kappa([\overline{\vA};\vA])\log({\textstyle \frac{1}{\varepsilon} })\frac{L_f\Delta_F}{\varepsilon^2}\right).
			$$
			Again, we conclude that the oracle complexity above are neither improvable (up to a logarithmic factor) under  Assumptions~\ref{assume:problemsetup} and~\ref{ass:polyhedralg} when $\overline{\vb}=\mathbf{0}$ and $\vb=\mathbf{0}$.
		\end{enumerate}
	\end{remark}			
	
	\begin{remark} 
		\label{rem:matchp}
		A few remarks follow on the oracle complexity of FOMs for solving problem~\eqref{eq:model}. 
		\begin{enumerate}
			\item[\textnormal{(i)}] 
			Again, we assert that  the oracle complexity in~\eqref{eq:complexity_stronglyconvex2}  is not improvable, up to a logarithmic factor, under Assumptions~\ref{assume:problemsetup} and~\ref{ass:dual2} by comparing their dependency on $\varepsilon$, $L_f$ and $\Delta_{F_0}$ and $\kappa([\overline{\vA}; \vA])$. 
			
			\item[\textnormal{(ii)}] Suppose Algorithm~\ref{alg:ipganormal} is applied to instance~$\mathcal{P}$ given in Definition~\ref{def:hardinstance}. We obtain from Remark~\ref{rem:matchsp}(ii) that if $\overline{\vb}=\mathbf{0}$ and $\vb=\mathbf{0}$, then Algorithm~\ref{alg:ipganormal} finds a near $\varepsilon$-KKT point of~\eqref{eq:model} with oracle complexity~\eqref{eq:complexity_polyhedral_nob2}, which equals $\mathcal{O}\left(\kappa([\overline{\vA};\vA])\log({\textstyle \frac{1}{\varepsilon} })\frac{L_f\Delta_{F_0}}{\varepsilon^2}\right)$ under  Assumptions~\ref{assume:problemsetup} and~\ref{ass:polyhedralg}. Hence,  the oracle complexity above are neither improvable (up to a logarithmic factor) under  Assumptions~\ref{assume:problemsetup} and~\ref{ass:polyhedralg} when $\overline{\vb}=\mathbf{0}$ and $\vb=\mathbf{0}$.
		\end{enumerate}
	\end{remark}

\section{Numerical Experiments}\label{sec:numer}
In this section, we evaluate the empirical performance of the proposed PG-RPD on a weakly convex problem.  Specifically, we compare its performance with the (linearized) ADMM from \cite{melo2017iteration3} and the (proximal) ALM (PALM) from \cite{rockafellar1976augmented}, both of which are applied to solving the splitting problem \eqref{eq:model-spli}.
We show that as the condition number \( \kappa([\overline{\mathbf{A}}; \mathbf{A}]) \) increases, PG-RPD gradually outperforms ADMM, and, on average, surpasses PALM in efficiency.
All the experiments are performed on iMac with Apple M1, a 3.2 GHz 8-core processor, and 16GB of RAM running MATLAB R2024a.

\subsection{Implementation details}
We conduct the comparisons among PG-RPD, ADMM, and PALM on solving a linearly constrained nonconvex quadratic program with $l_1$ reglarizer in the form of 
\begin{equation}
	\label{eq:test}
	\min _{\mathbf{x} \in \mathbb{R}^d} \frac{1}{2} \mathbf{x}^{\top} \mathbf{Q}_0 \mathbf{x} + \|\overline \vA\vx + \overline\vb\|_1 \text {, s.t. } \mathbf{A} \mathbf{x}+\mathbf{b}=\vzero.
\end{equation}
Suppose that the condition number of $[\overline{\vA};\vA]$ is $\kappa$.
We then generate a full row-rank matrix $[\overline{\vA};\vA]$ by the MATLAB command 
\begin{center}
\verb|[U,~]=qr(randn(n0+n1));|\verb|V=randn(d,n0+n1);|\verb|eigens=linspace(1,1/kappa,n0+n1);|\\ \verb|AAbar=U(:,1:n0+n1)*diag(eigens)*V';| \verb|Abar=AAbar(1:n0,:);| \verb|A=AAbar((n0+1):end,:);|.
\end{center} Here, \verb|n0| equals $\overline{n} = \frac{ d}{2}$, \verb|n1| refers to $n=\frac{ 2d}{5}$, and \verb|kappa| represents~\( \kappa \).  The parameters $d$ and $\kappa$ take values from 
$\{100,1000,2000\}$ and $\{2,10,100,10000\}$, respectively.
The vectors $\overline{\vb},\vb$ are generated using the MATLAB function \verb|randn|, and 
$\mathbf{Q}_0$ is generated by the command \verb|R*diag((Lf-rho)*rand(1,n))*R'-rho*eye(d)|, where \verb|R| is generated as an orthonormal matrix, the weakly convex modulus \verb|rho|$\in\{0.1,1,5\}$, and \verb|Lf|=10$\times$\verb|rho|. 
For the test problem, we know that $L_f =$\verb|Lf| and $\rho$=\verb|rho|.

For PG-RPD, we set $\sigma = 1$, $\tau = \frac{11}{10} L_f$, and $\varepsilon = 10^{-3}$. In each inner iteration, we choose \mbox{$i_k=20$} and \mbox{$j_k=\lceil 2\sqrt{2}\kappa ([\overline{\vA}; \vA])\rceil$.} Additionally, we terminate the inner iterations once the stationary violation of the problem $\min_{\vz}\mathcal{D}_k(\vz)$ falls below $10^{-4}$.
For ADMM, following the notations in \cite{melo2017iteration3}, we set  $\theta = \beta = 1$ and \mbox{$\tau = \frac{11}{10} L_f$.}  
For PALM, following the notations in \cite{rockafellar1976augmented}, we set $c_k=\frac{1}{\rho}$ for all $k\ge 0$.
Both ADMM and PALM employ Nesterov’s accelerated proximal gradient (APG) method \cite{nesterov2013gradient} to solve their core strongly convex subproblems.
In addition, all compared methods are initialized using the MATLAB command \verb|x=A\b; y=Abar*x+bbar;|, and terminated when the KKT violation of problem \eqref{eq:model} falls below $\varepsilon=10^{-3}$.

\subsection{Comparisons among PG-RPD, ADMM, and PALM}
Figures \ref{fig:all}--\ref{fig:all3} present numerical comparisons between {PG-RPD}, {ADMM}, and PALM across varying problem size \( d \), weakly convex modulus $\rho$, and condition numbers \( \kappa([\overline{\mathbf{A}}; \mathbf{A}]) \). The horizontal axis denotes the number of gradient evaluations (Grad), while the vertical axis displays the KKT violation on a logarithmic scale, providing a detailed view of convergence behavior. 
Overall, the results demonstrate that {PG-RPD} performs well, particularly for large-scale and moderately ill-conditioned problems, where it achieves faster convergence and greater robustness compared to {ADMM} and PALM. In comparison, ADMM sometimes diverges when 
$\rho$ and $\kappa$ are large, due to the unbounded feasible region of problem \eqref{eq:test}. 
For small-scale problems, such as \( d = 100 \), all methods demonstrate rapid reductions in KKT violations across all condition numbers. 
As the problem size \( d \) or the weakly convex modulus $\rho$ increases, all methods require more gradient evaluations. 
Moreover, Figures~\ref{fig:all}--\ref{fig:all3} reveal that {ADMM} and PALM are more sensitive to the condition number, with a noticeable slowdown in convergence when \(\kappa = 10000\). In contrast, {PG-RPD} gradually outperforms {ADMM} and PALM, with the performance gap widening as \(\kappa\) increases. 
This phenomenon can be explained by the fact that the upper complexity of PG-RPD is lower than that of ADMM in \cite{melo2017iteration3} by at least a factor of
$
\left( \frac{\|\overline{\mathbf{A}}\|}{\lambda^+_{\min}\bigl([\overline{\mathbf{A}}; \mathbf{A}][\overline{\mathbf{A}}; \mathbf{A}]^\top\bigr)} \right)^2
$
 (see Appendix~\ref{sec:admm}),
which  corresponds to  $\kappa^2$ in our tests.

\begin{figure}[h] 
	\begin{center}
		\subfloat[$d=$100, $\kappa=$2]{\includegraphics[width=39mm]{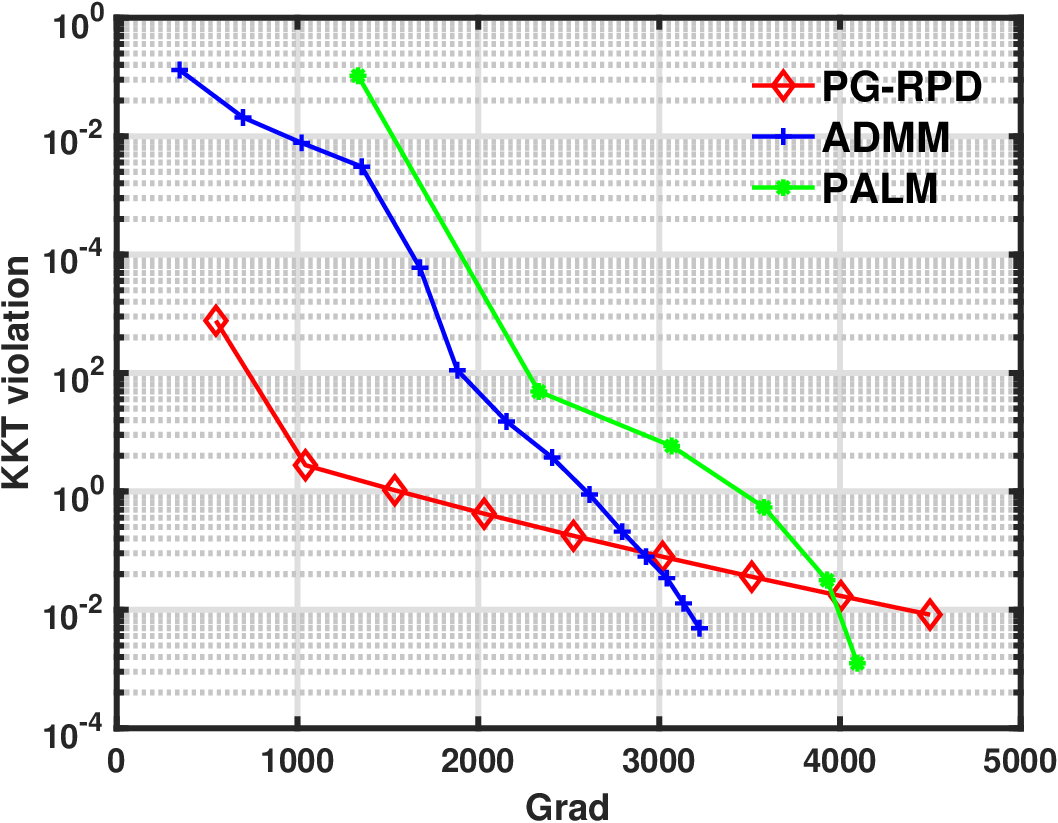}} 
		\subfloat[$d=$100, $\kappa=$10]{\includegraphics[width=39mm]{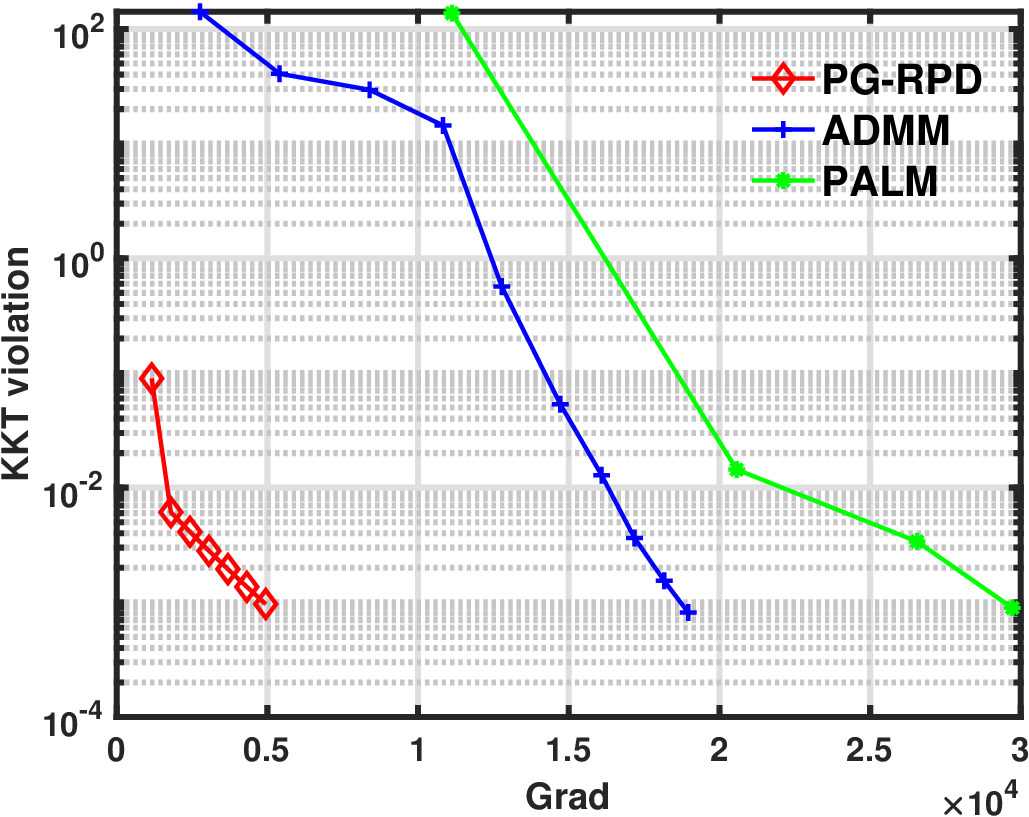}}	 
		\subfloat[$d=$100, $\kappa=$100]{\includegraphics[width=39mm]{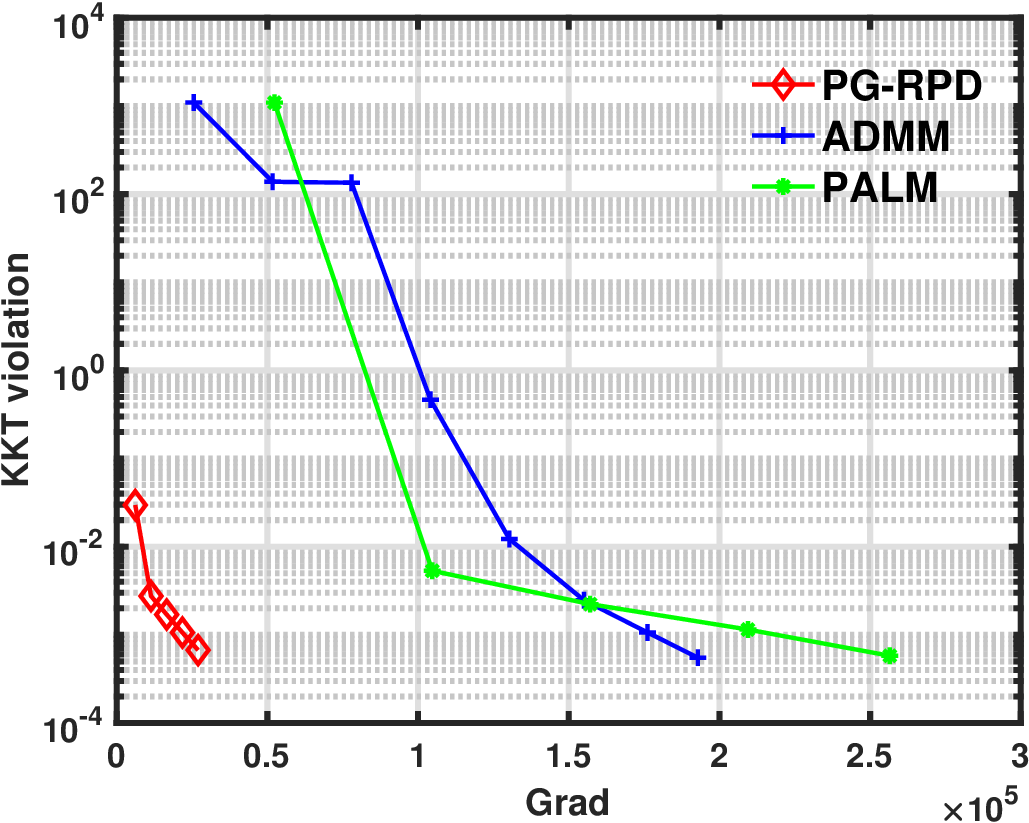}}	 
		\subfloat[$d=$100, $\kappa=$10000]{\includegraphics[width=39mm]{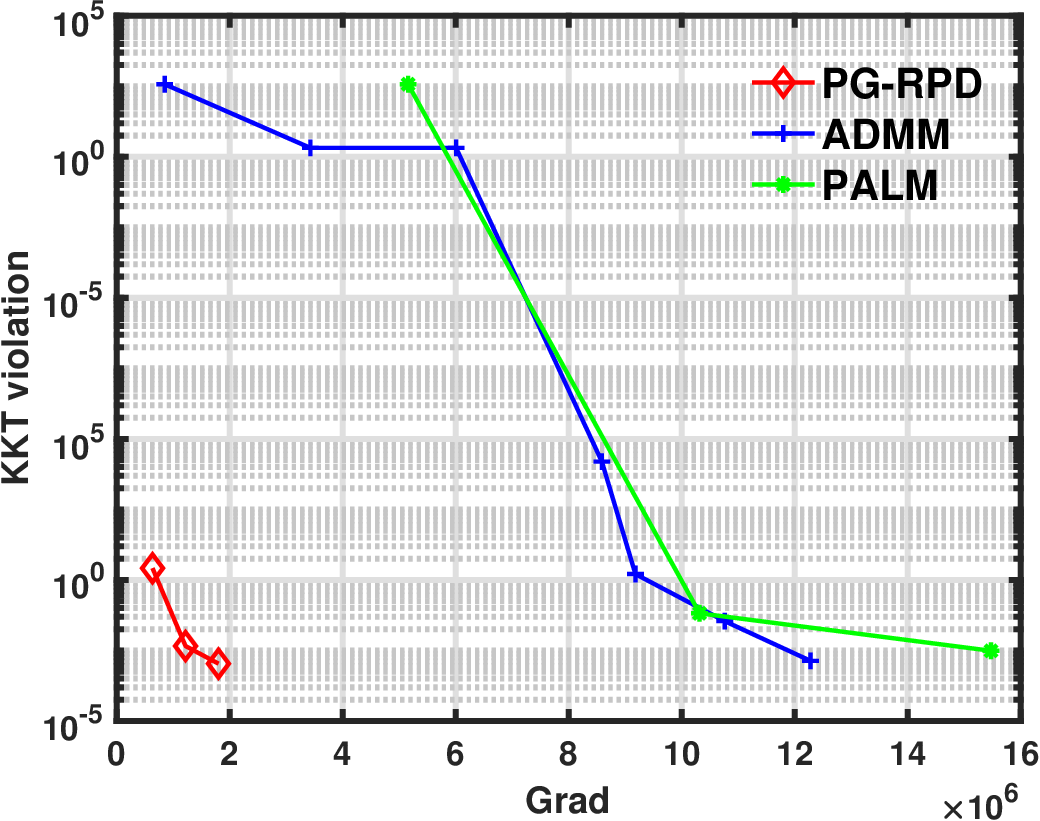}}
		\\
		\subfloat[$d=$1000, $\kappa=$2]{\includegraphics[width=39mm]{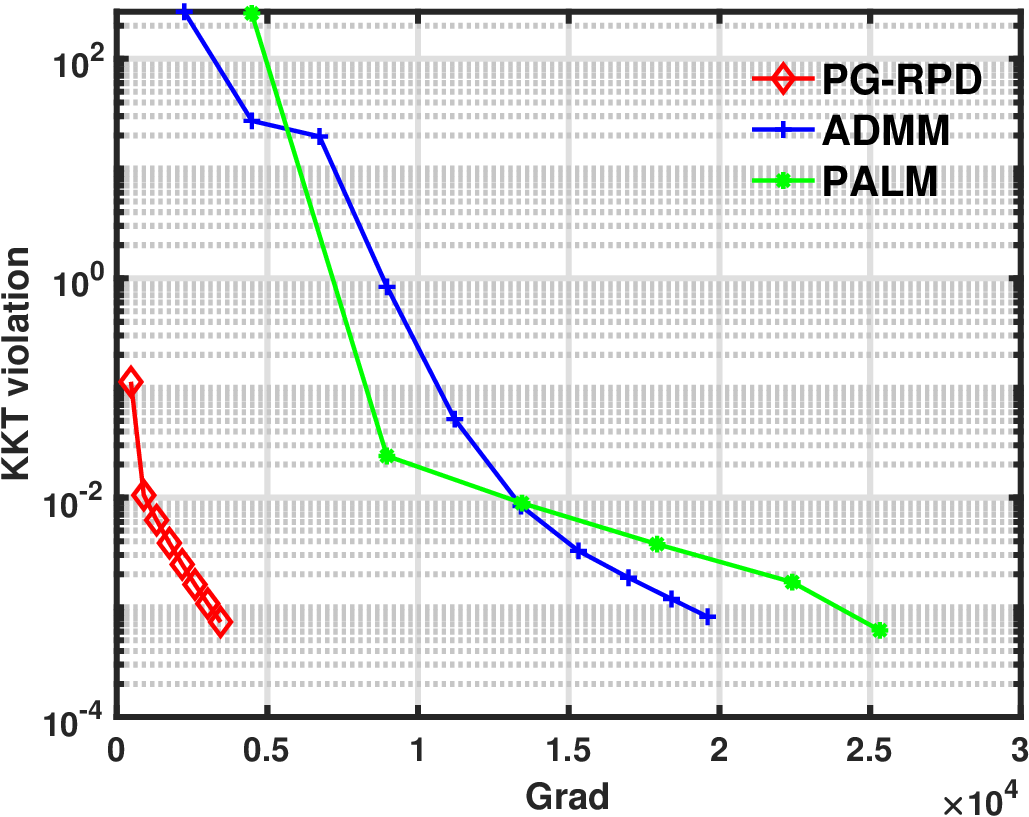}}
		\subfloat[$d=$1000, $\kappa=$10]{\includegraphics[width=39mm]{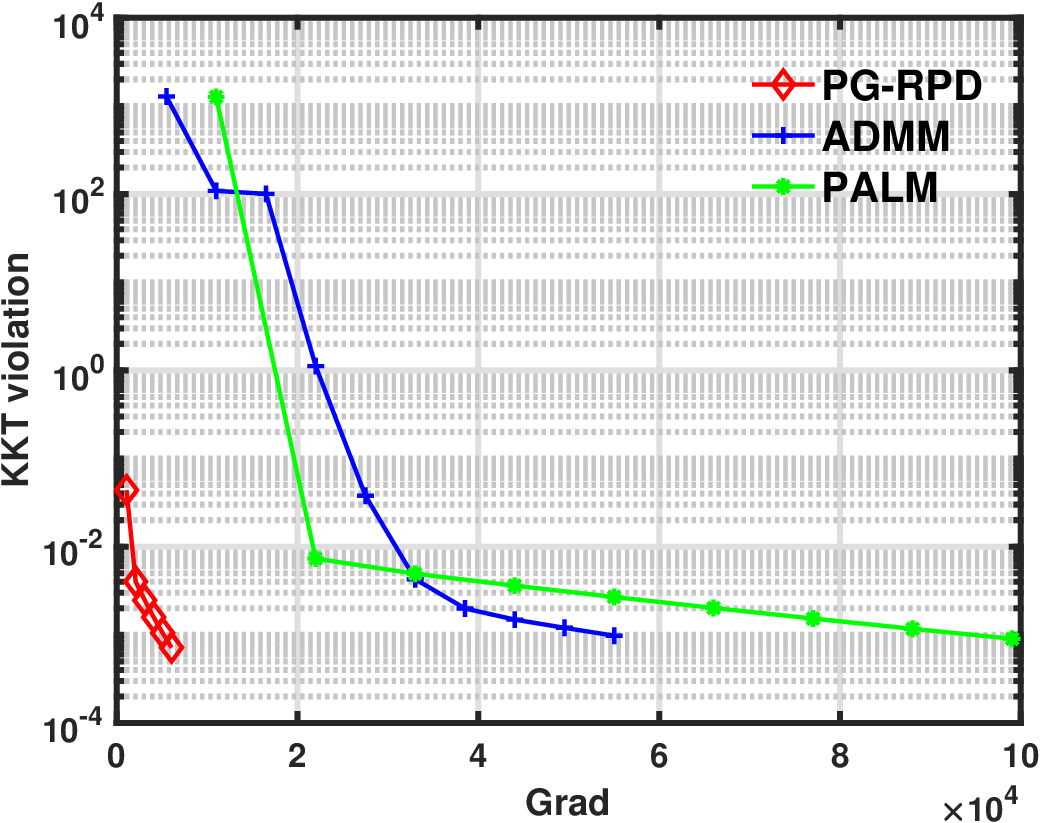}}
		\subfloat[$d=$1000, $\kappa=$100]{\includegraphics[width=39mm]{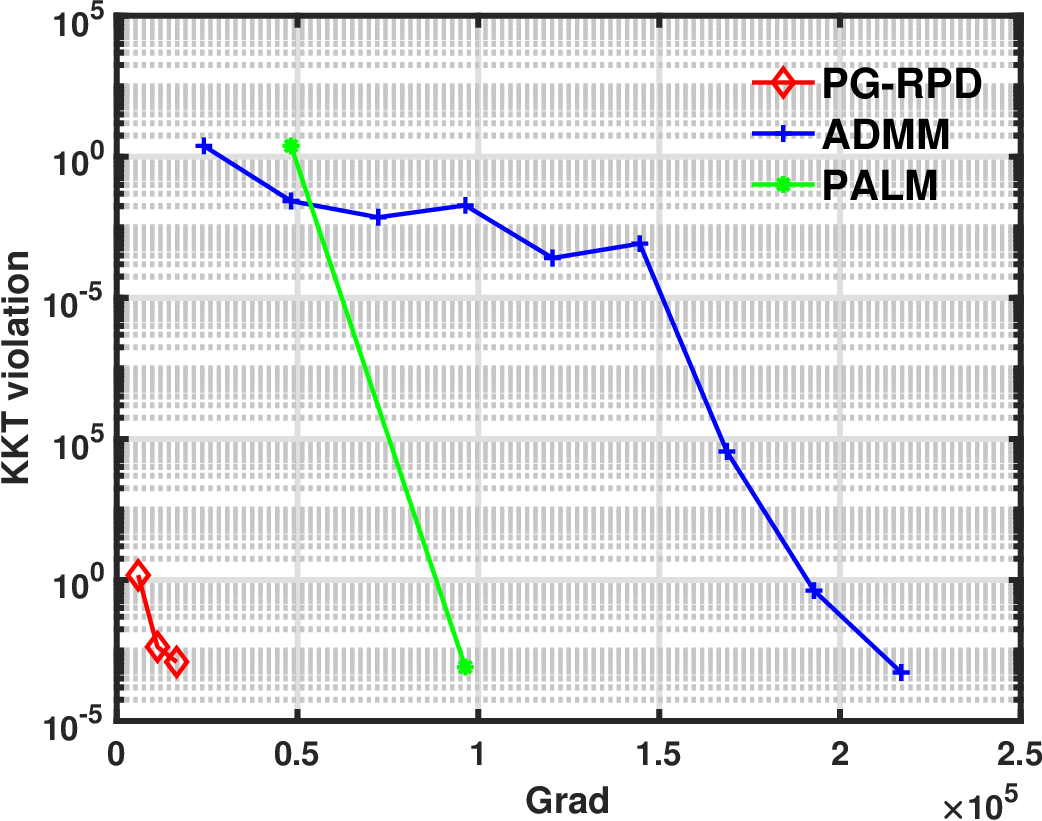}}	
		\subfloat[$d=$1000, $\kappa=$10000]{\includegraphics[width=39mm]{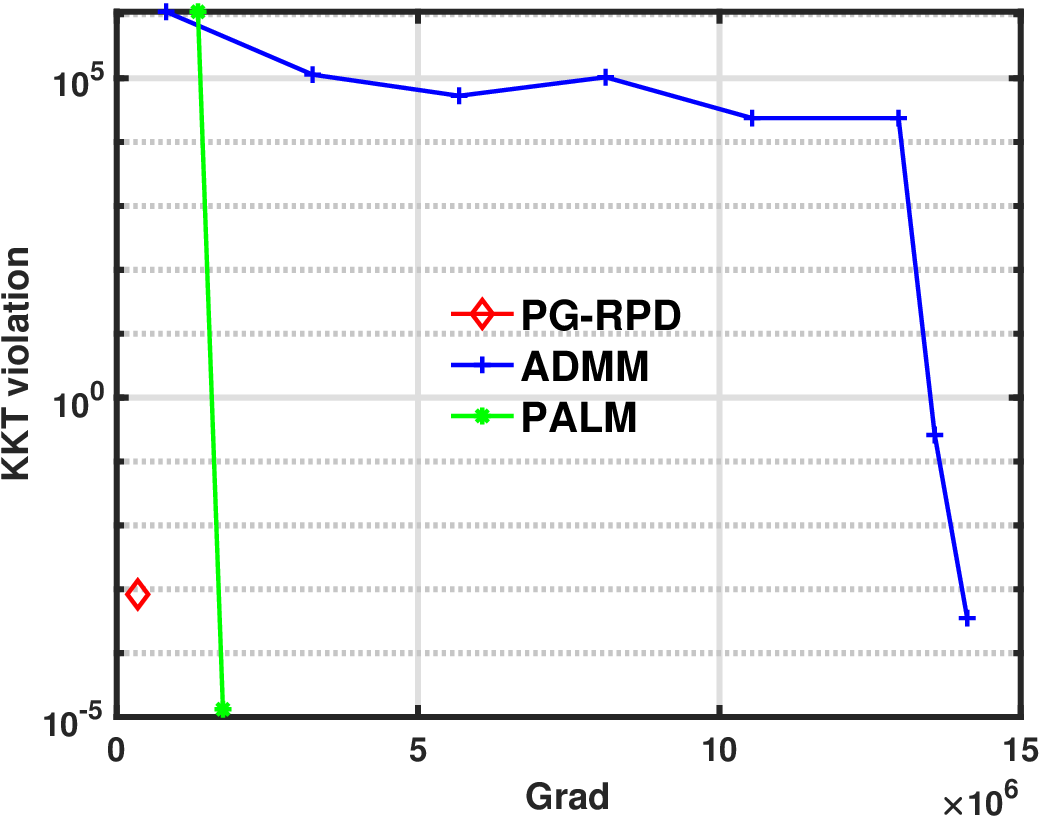}}
		\\
		\subfloat[$d=$2000, $\kappa=$2]{\includegraphics[width=39mm]{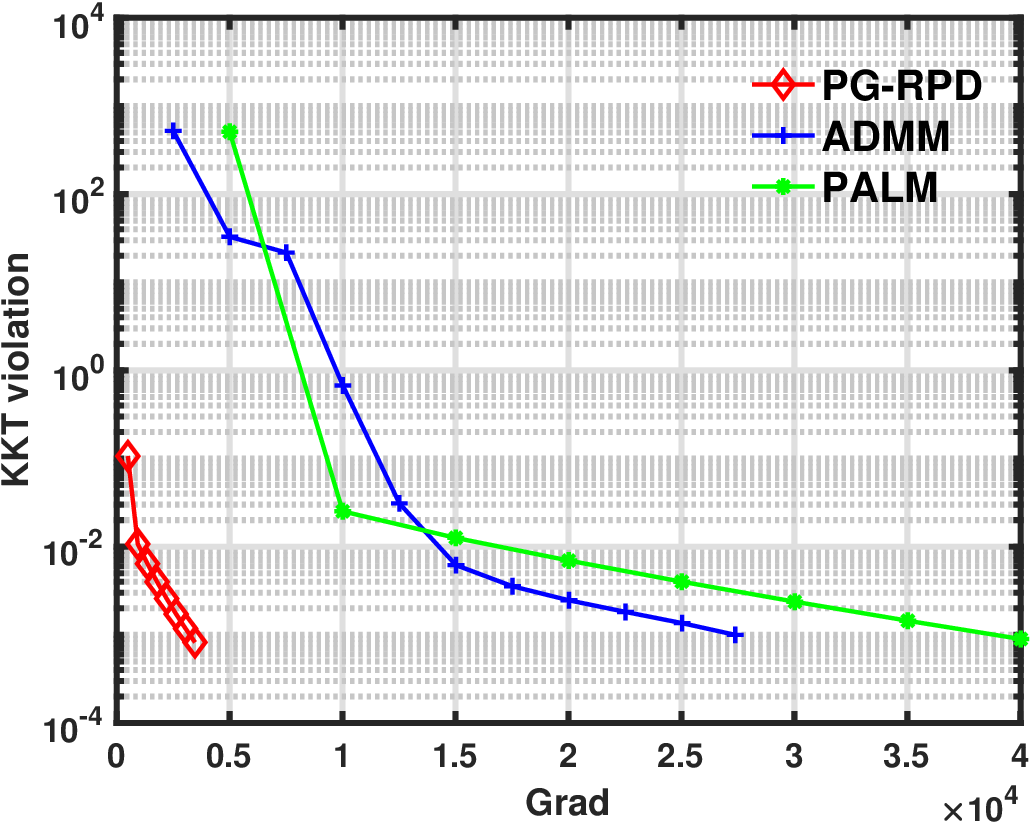}}
		\subfloat[$d=$2000, $\kappa=$10]{\includegraphics[width=39mm]{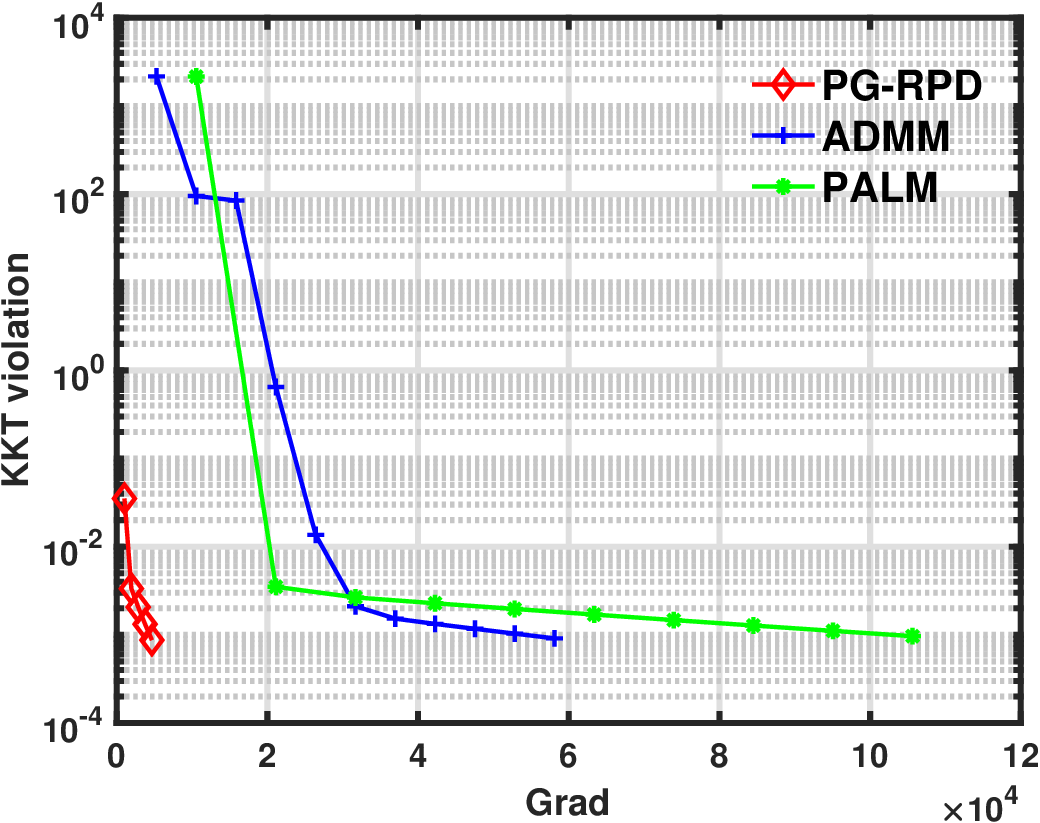}}
		\subfloat[$d=$2000, $\kappa=$100]{\includegraphics[width=39mm]{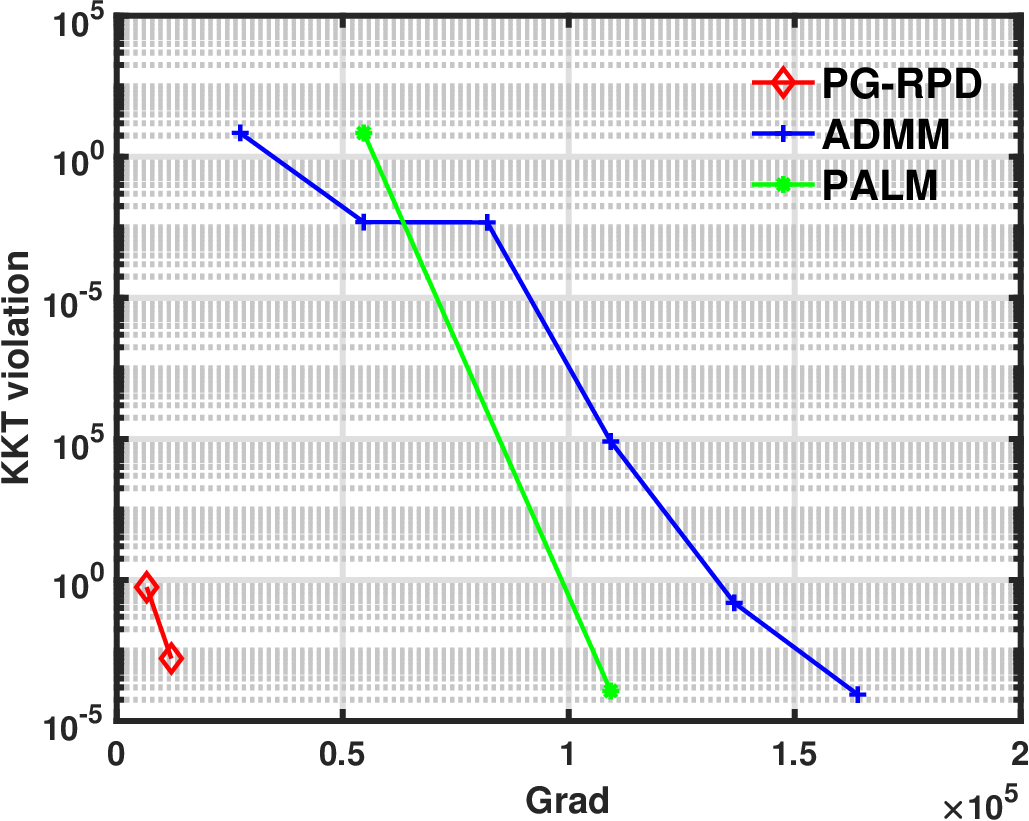}}
		\subfloat[$d=$2000, $\kappa=$10000]{\includegraphics[width=39mm]{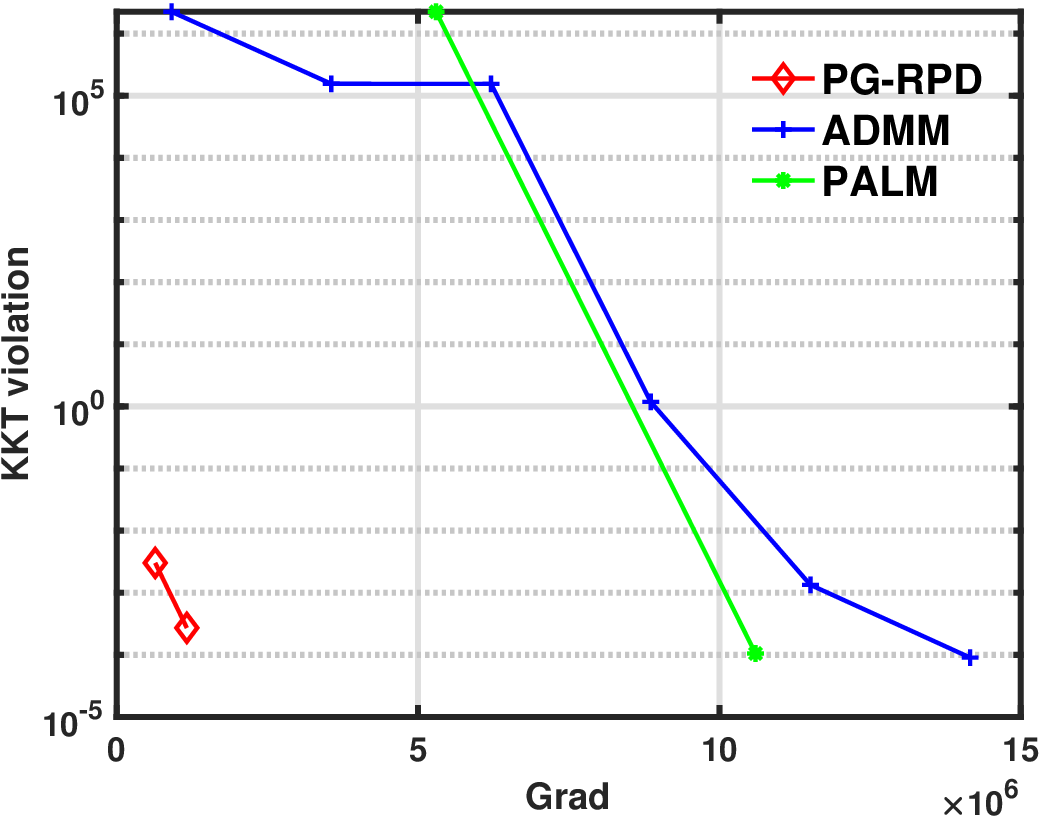}}
	\end{center} 
	\caption{Comparisons among PG-RPD, ADMM in \cite{melo2017iteration3}, and PALM in \cite{rockafellar1976augmented} on solving instances of \eqref{eq:test} with weak convexity modulus $\rho=0.1$}\label{fig:all}
\end{figure}

\begin{figure}[h] 
	\begin{center}
		\subfloat[$d=$100, $\kappa=$2]{\includegraphics[width=39mm]{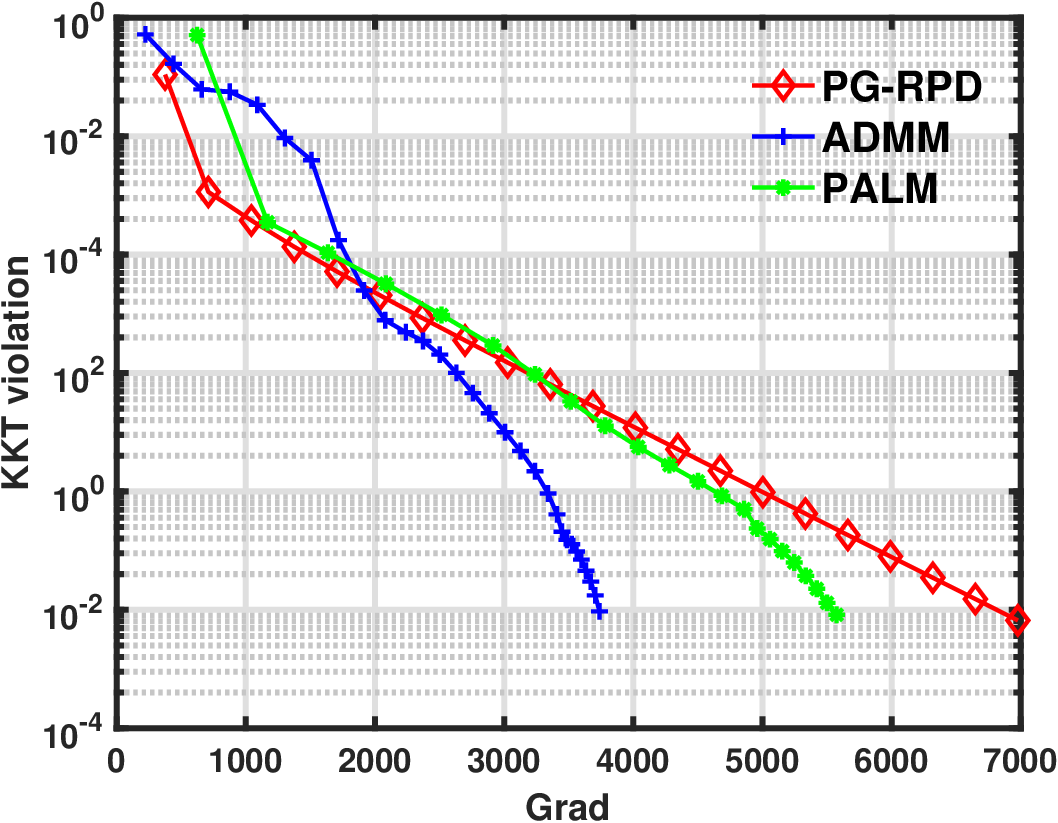}} 
		\subfloat[$d=$100, $\kappa=$10]{\includegraphics[width=39mm]{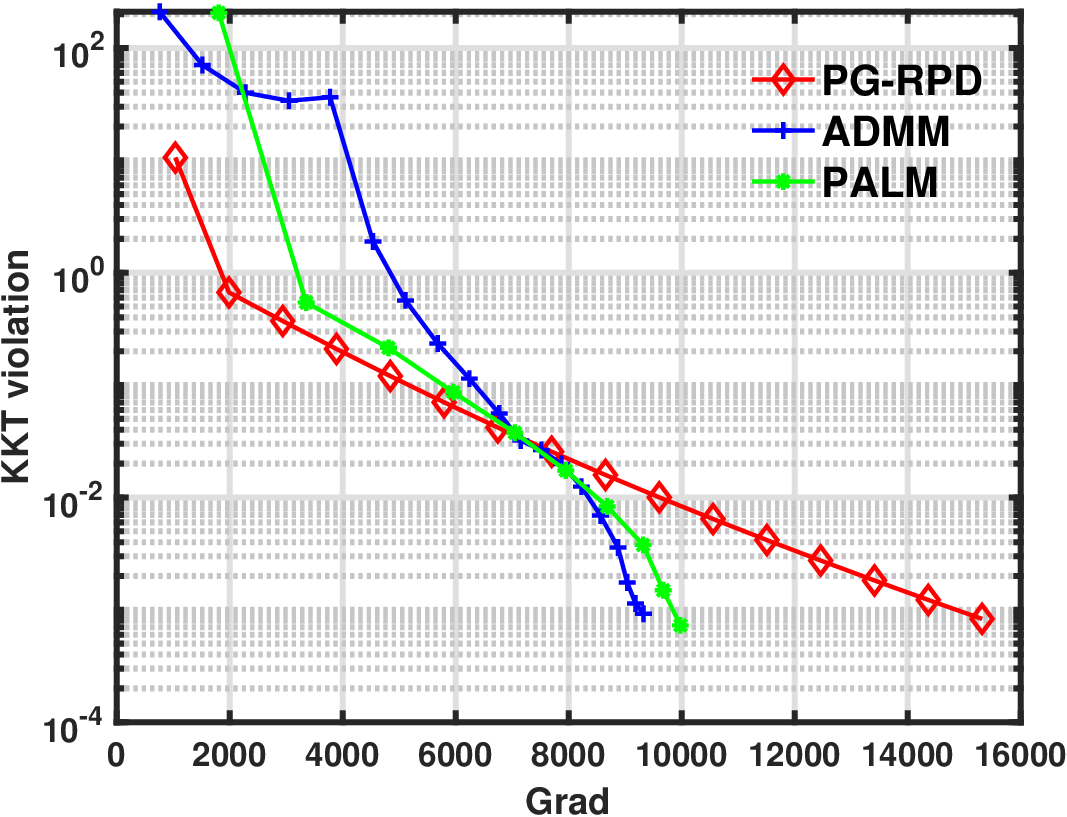}} 
		\subfloat[$d=$100, $\kappa=$100]{\includegraphics[width=39mm]{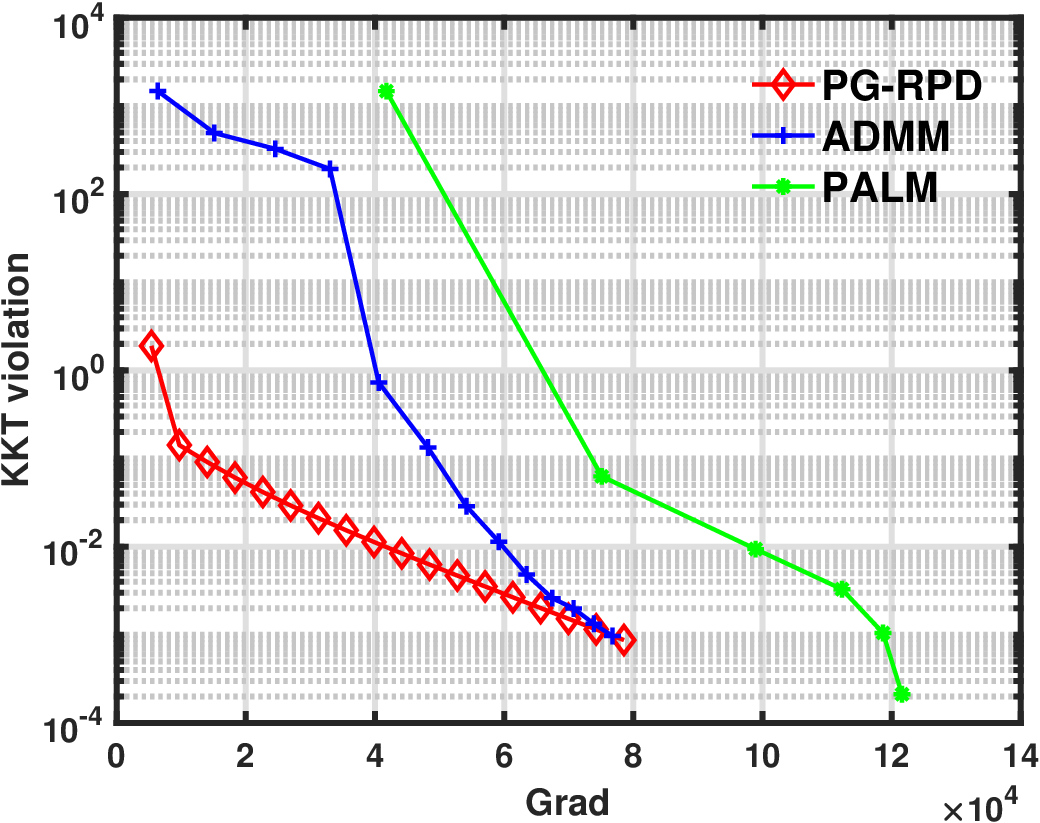}}
		\subfloat[$d=$100, $\kappa=$10000]{\includegraphics[width=39mm]{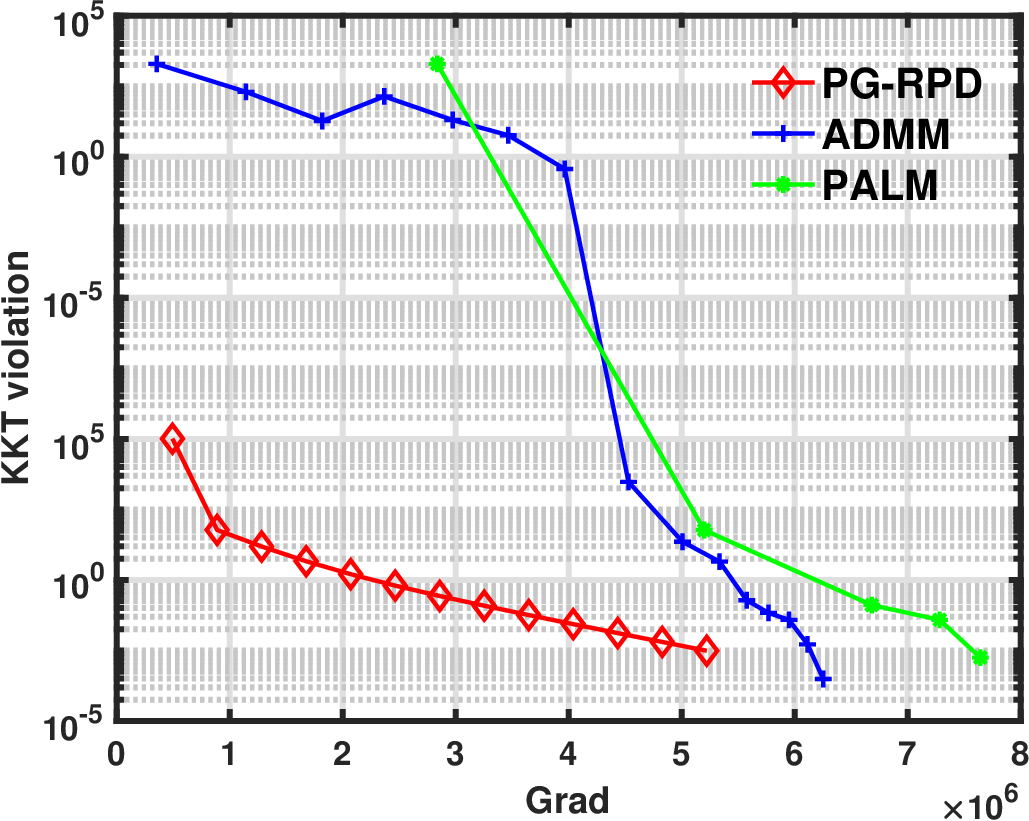}}
		\\
		\subfloat[$d=$1000, $\kappa=$2]{\includegraphics[width=39mm]{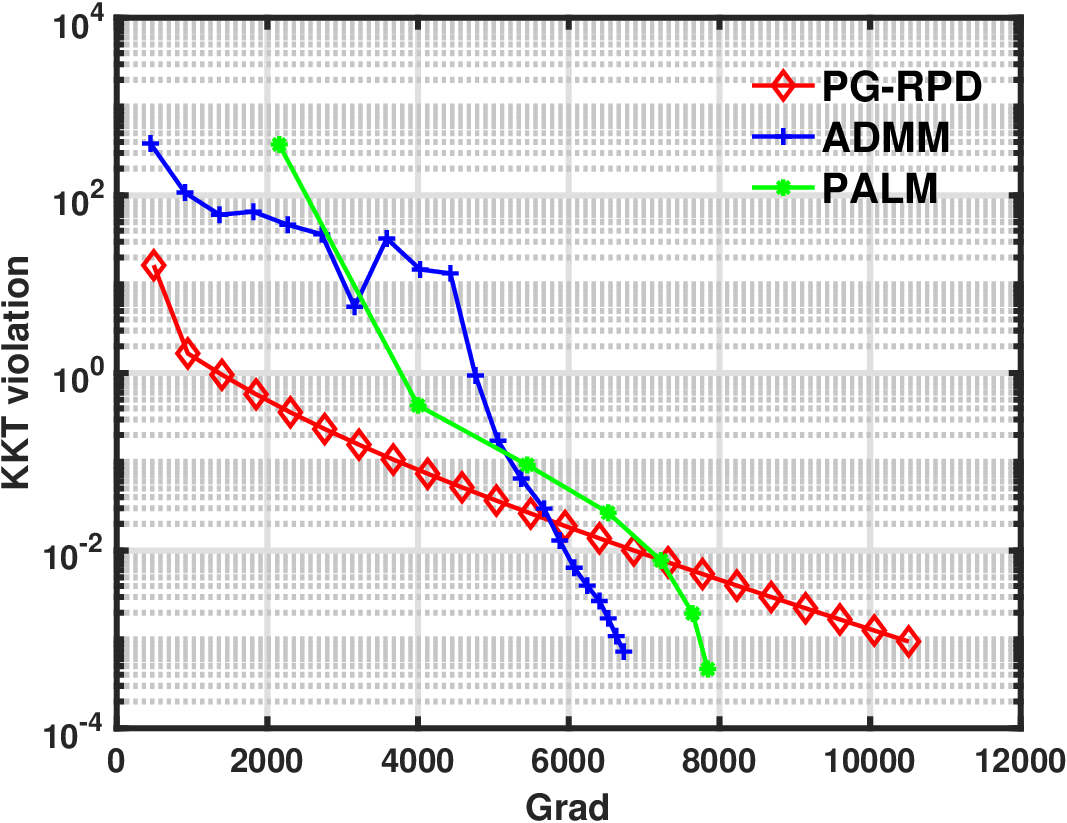}}
\subfloat[$d=$1000, $\kappa=$10]{\includegraphics[width=39mm]{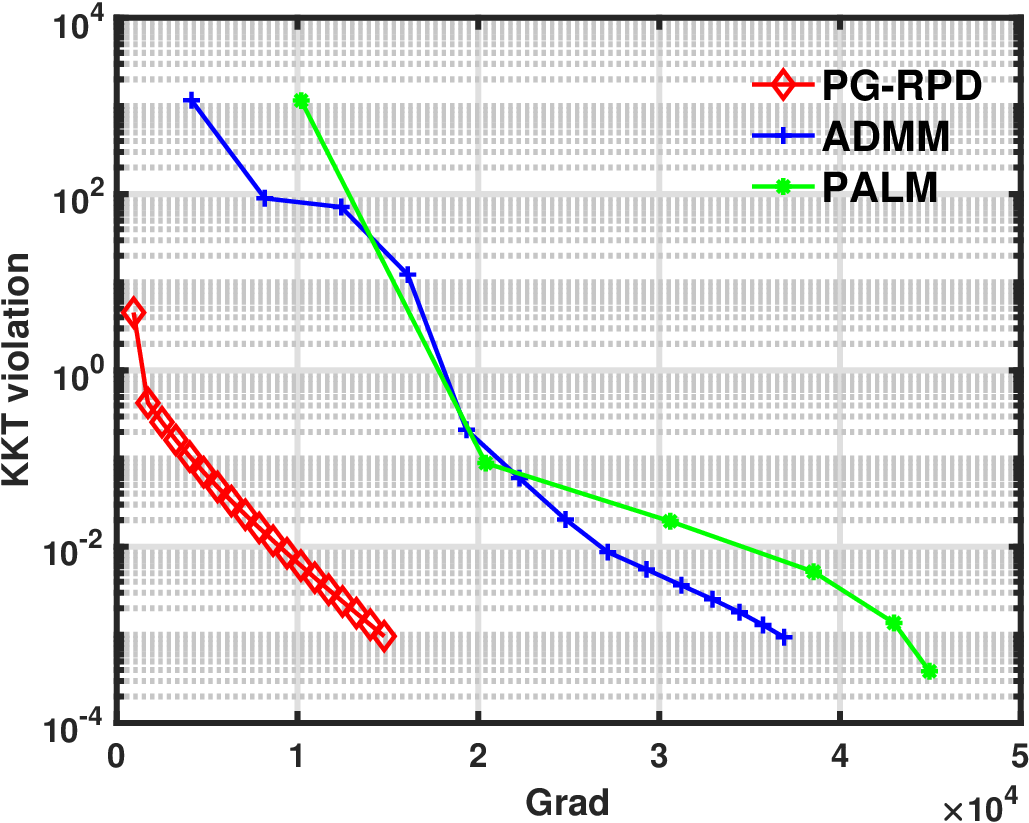}}
\subfloat[$d=$1000, $\kappa=$100]{\includegraphics[width=39mm]{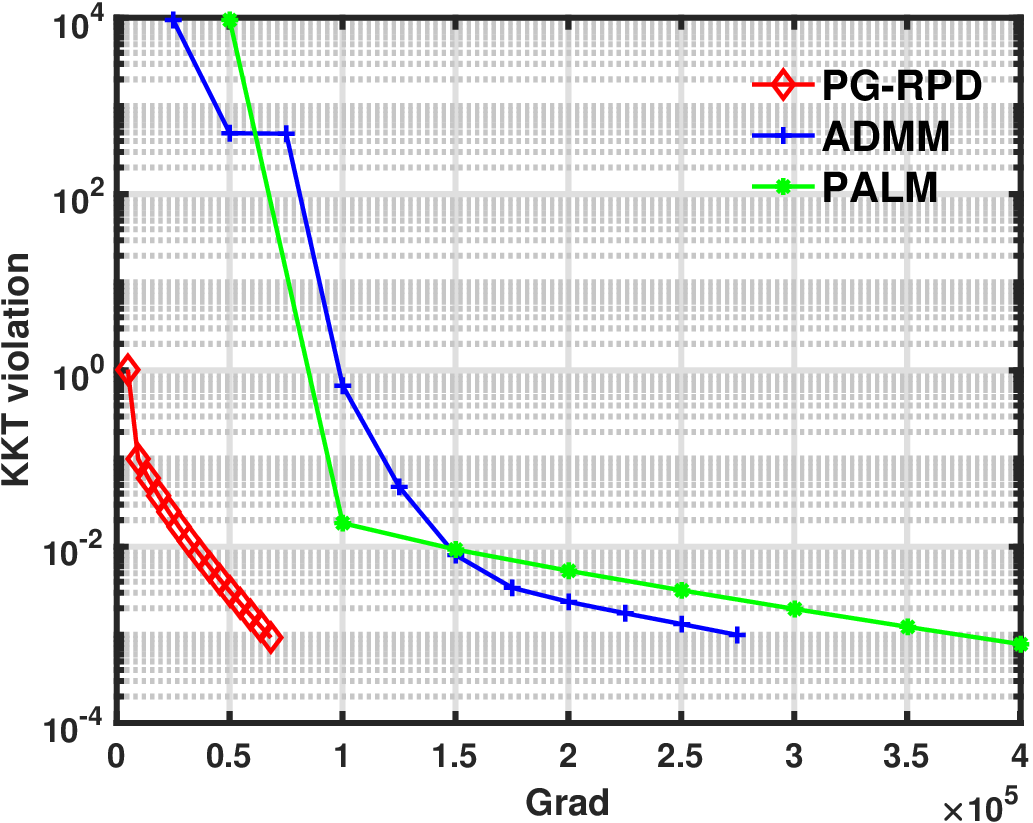}}
\subfloat[$d=$1000, $\kappa=$10000]{\includegraphics[width=39mm]{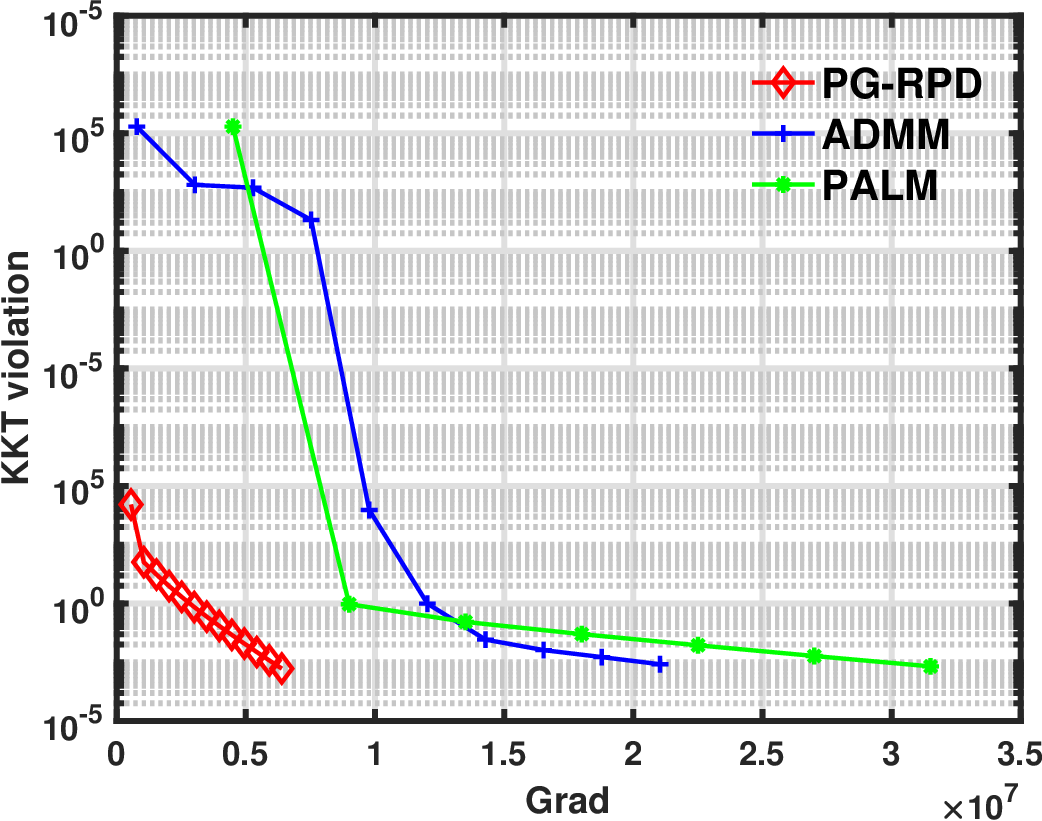}}
		\\
		\subfloat[$d=$2000, $\kappa=$2]{\includegraphics[width=39mm]{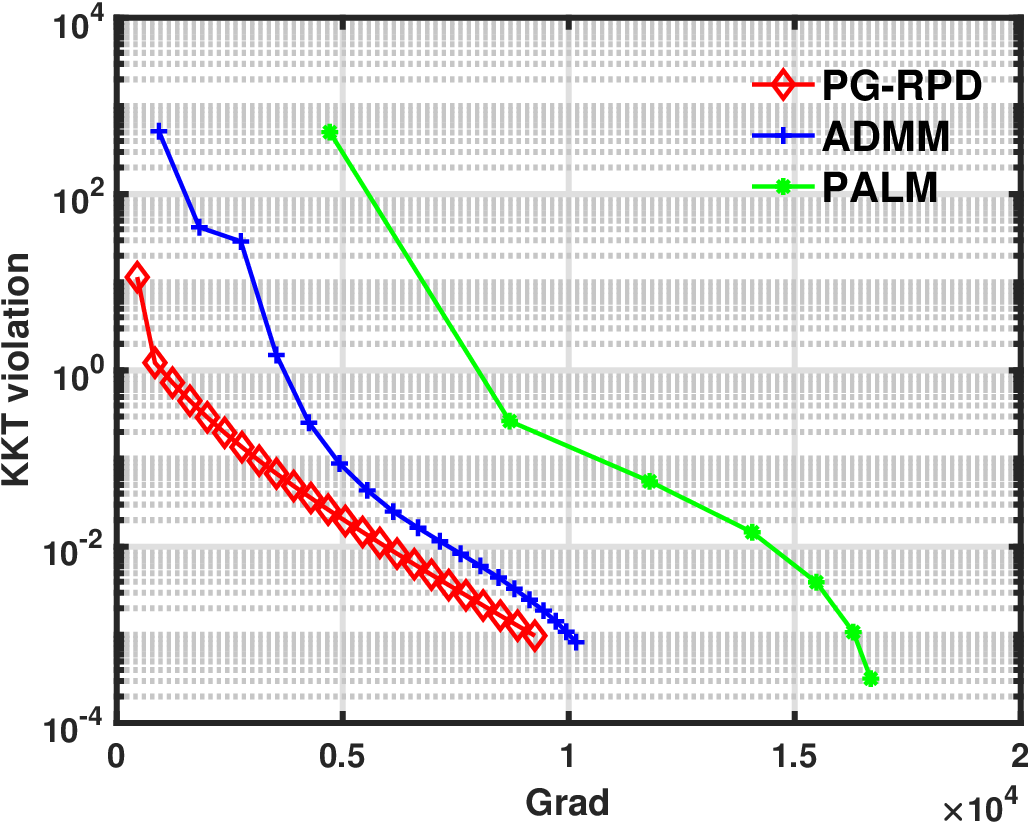}}
\subfloat[$d=$2000, $\kappa=$10]{\includegraphics[width=39mm]{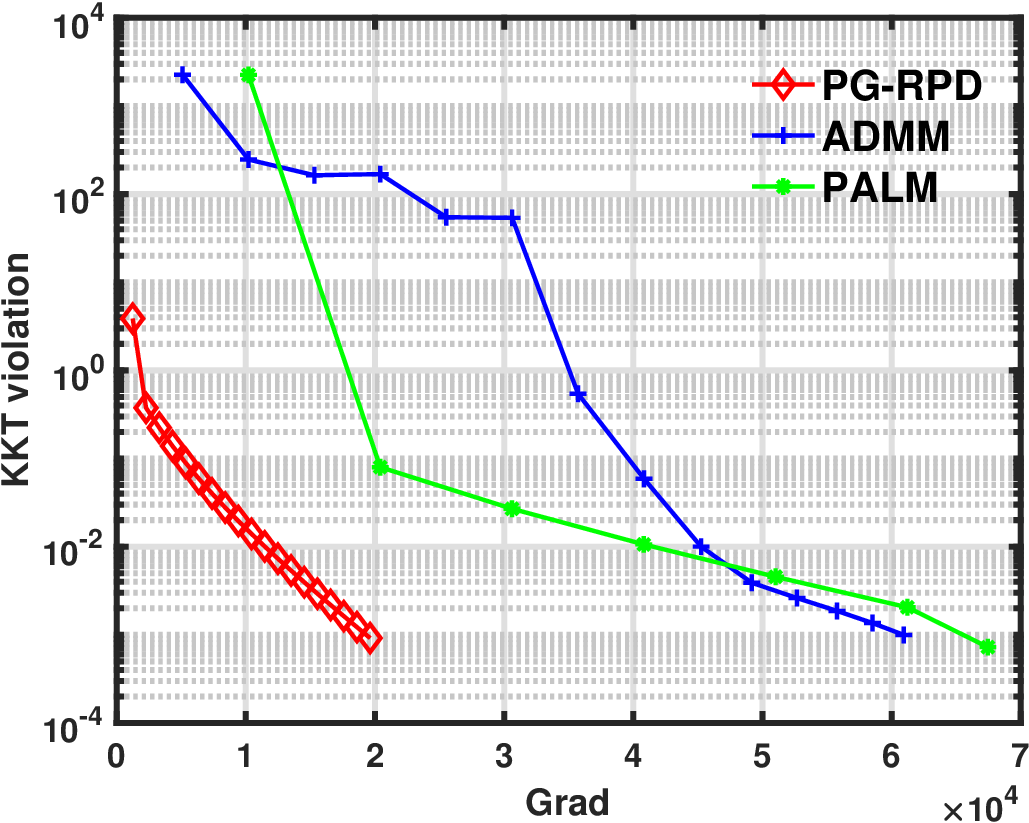}}
\subfloat[$d=$2000, $\kappa=$100]{\includegraphics[width=39mm]{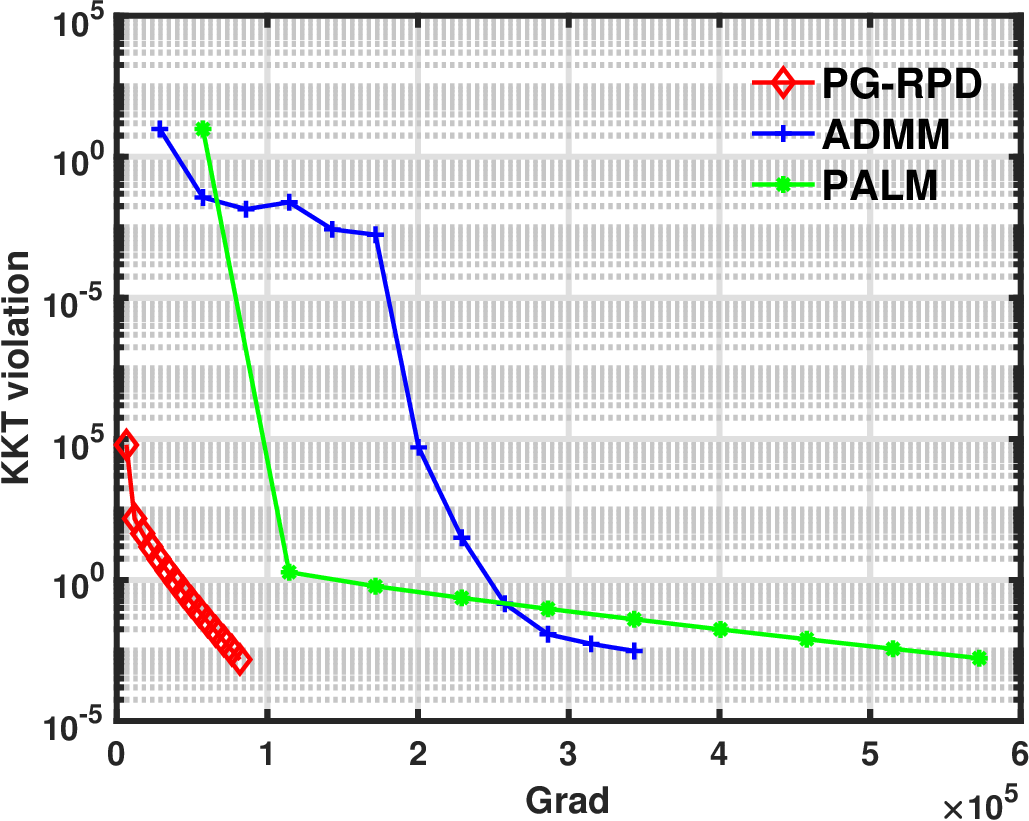}}
\subfloat[$d=$2000, $\kappa=$10000]{\includegraphics[width=39mm]{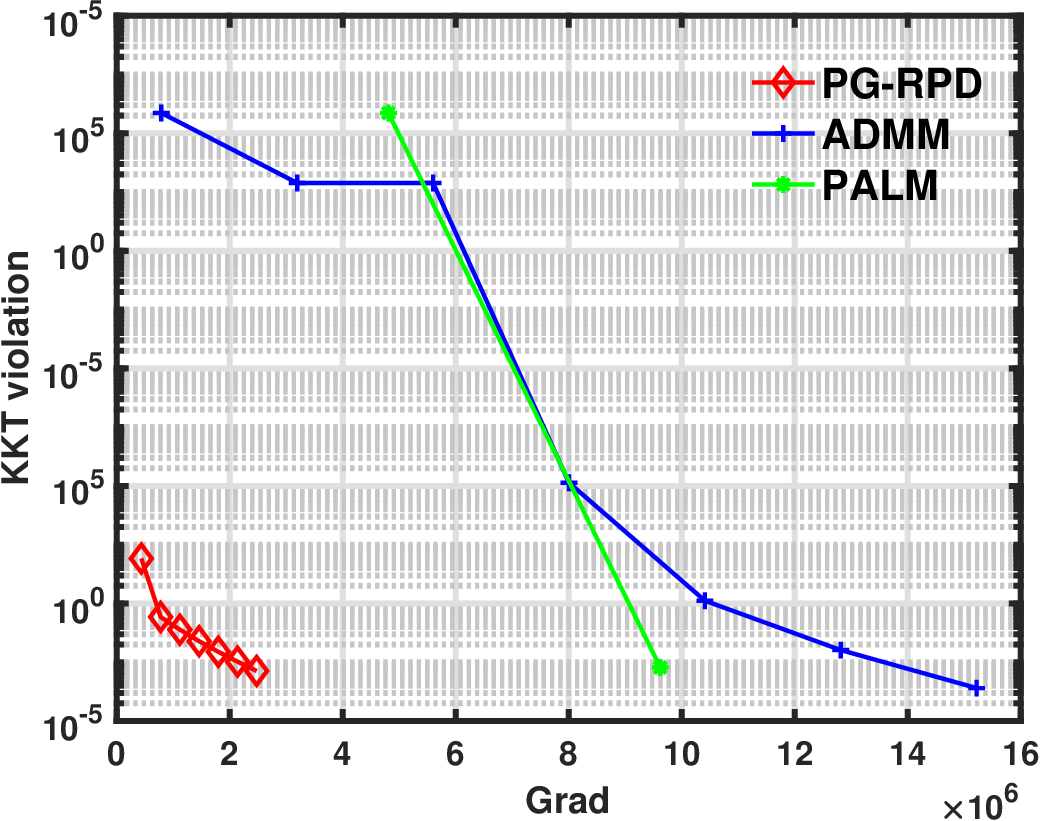}}
	\end{center} 
	\caption{Comparisons among PG-RPD, ADMM in \cite{melo2017iteration3}, and PALM in \cite{rockafellar1976augmented} on solving instances of \eqref{eq:test} with weak convexity modulus $\rho=1$}\label{fig:all2}
\end{figure}

\begin{figure}[h] 
	\begin{center}
		\subfloat[$d=$100, $\kappa=$2]{\includegraphics[width=39mm]{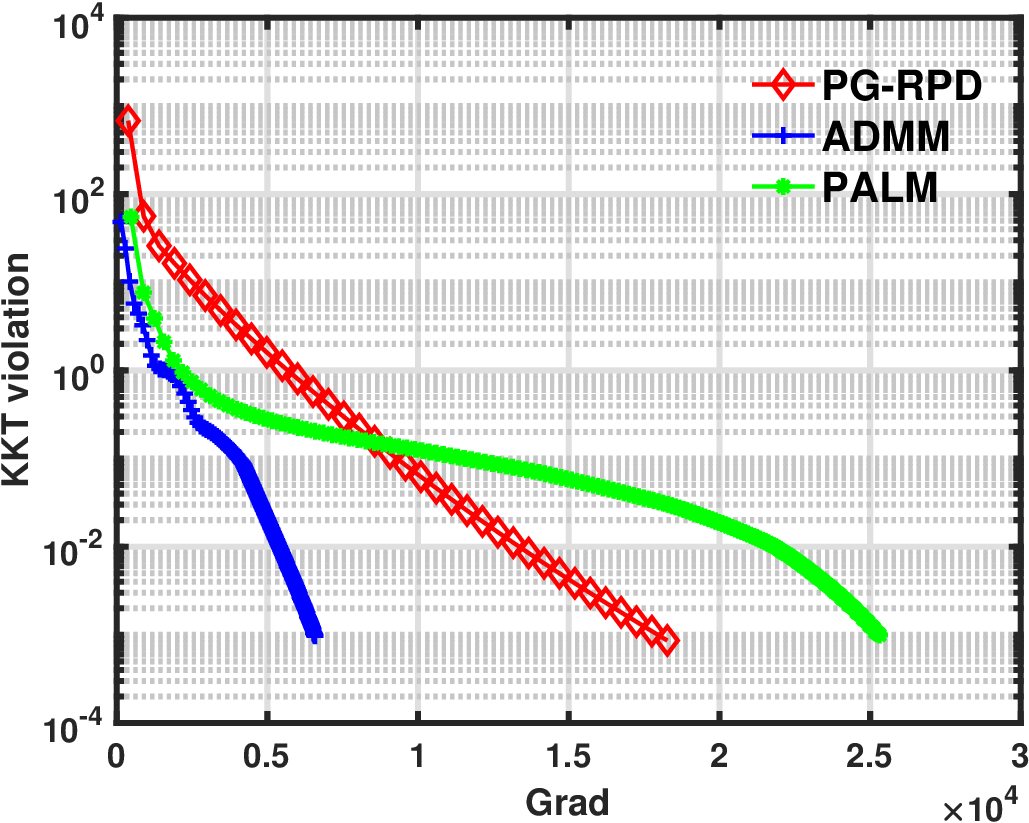}}
		\subfloat[$d=$100, $\kappa=$10]{\includegraphics[width=39mm]{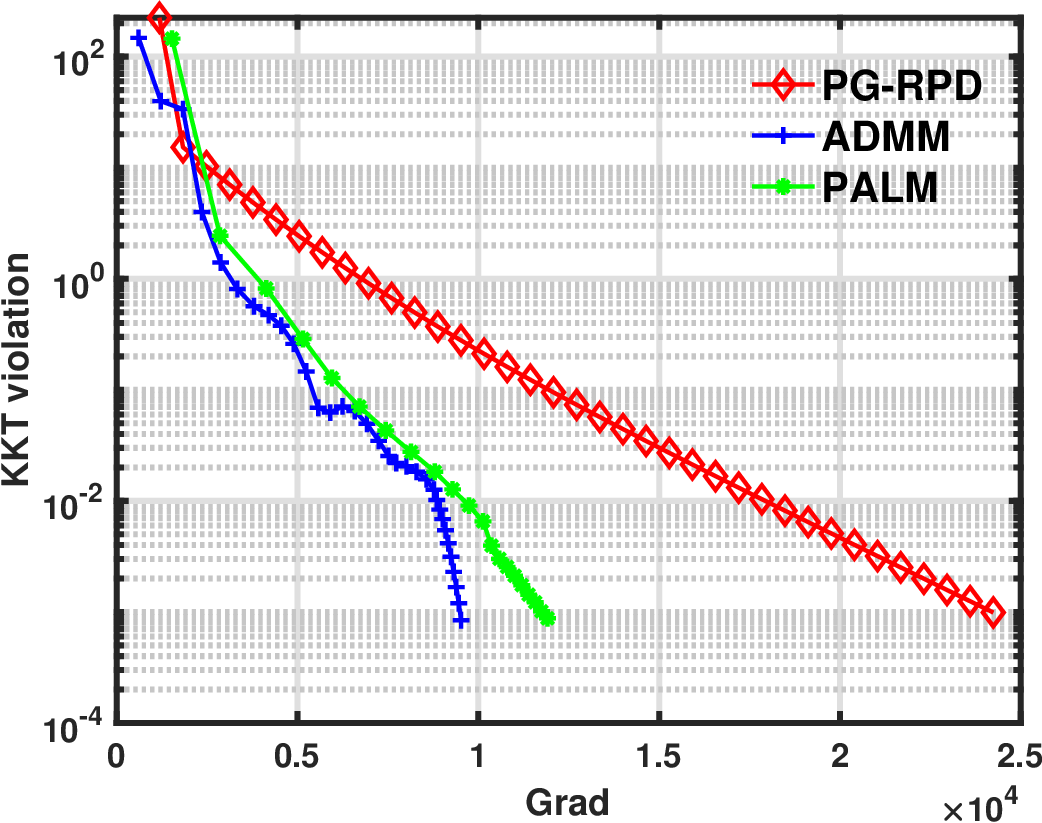}}
		\subfloat[$d=$100, $\kappa=$100]{\includegraphics[width=39mm]{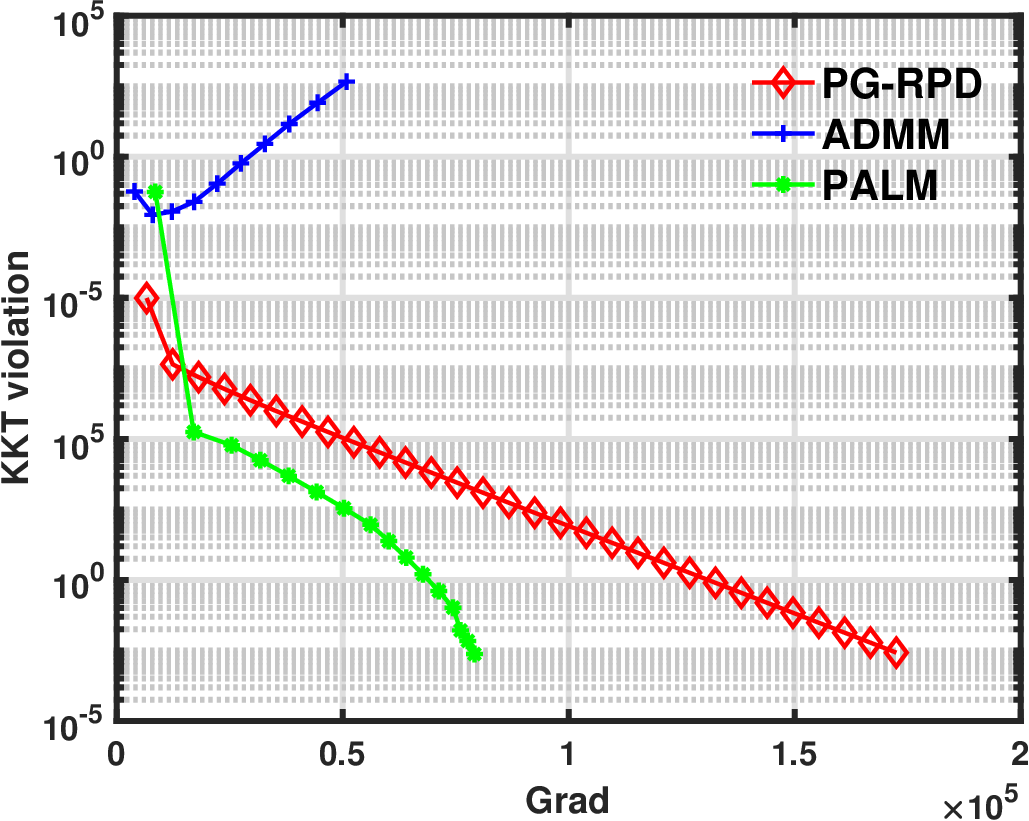}}
		\subfloat[$d=$100, $\kappa=$10000]{\includegraphics[width=39mm]{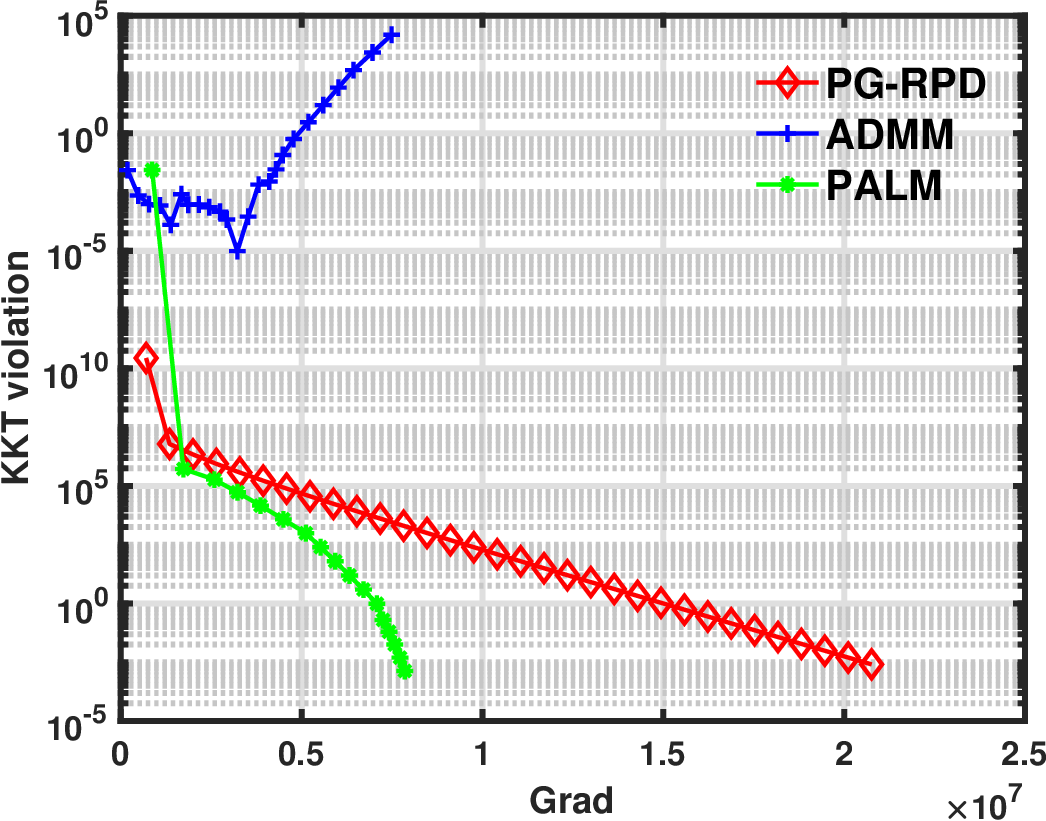}}
		\\
		\subfloat[$d=$1000, $\kappa=$2]{\includegraphics[width=39mm]{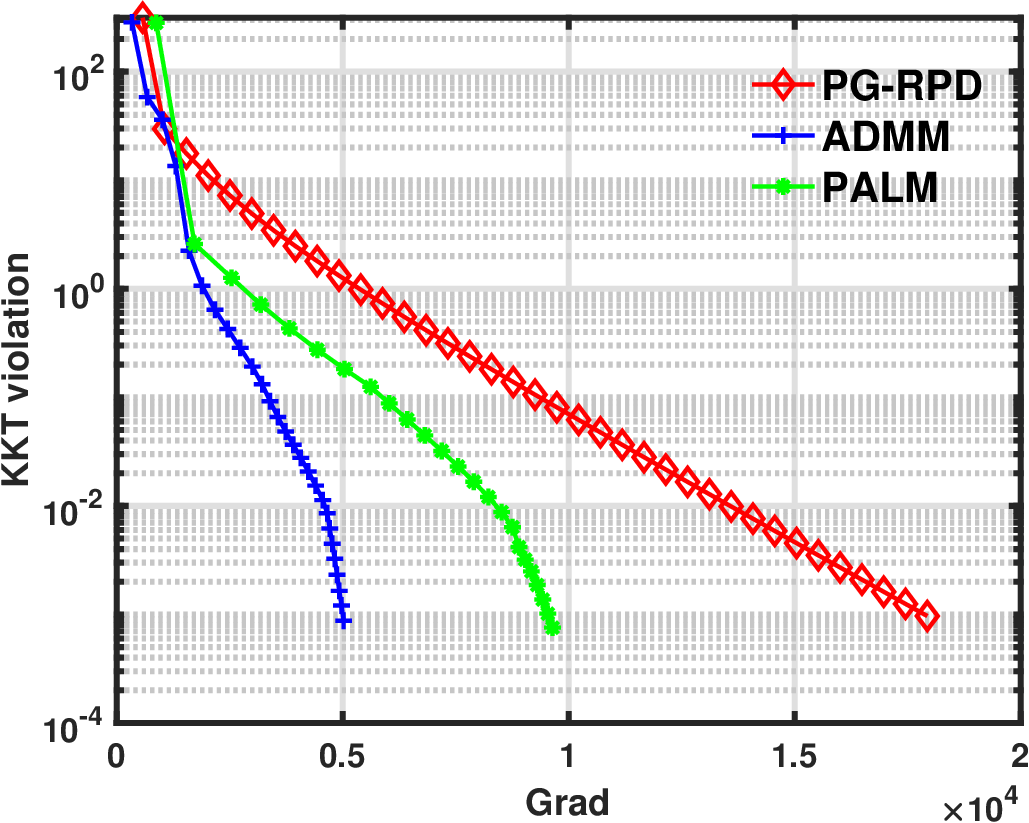}}
\subfloat[$d=$1000, $\kappa=$10]{\includegraphics[width=39mm]{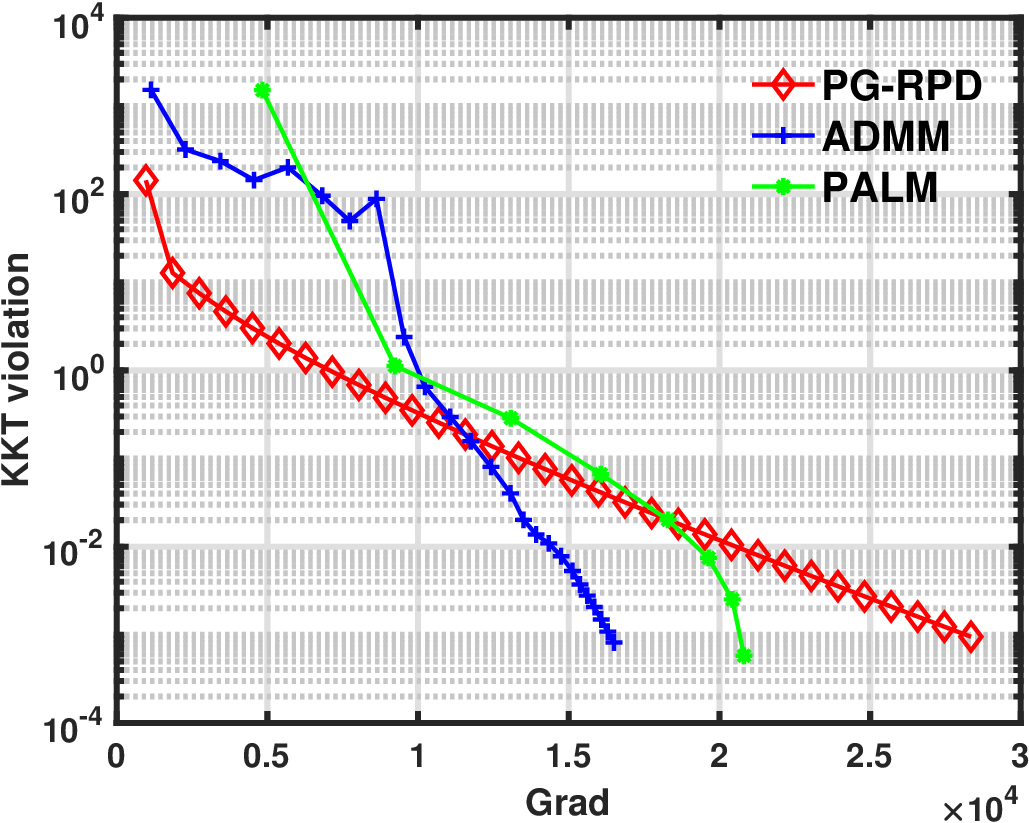}}
\subfloat[$d=$1000, $\kappa=$100]{\includegraphics[width=39mm]{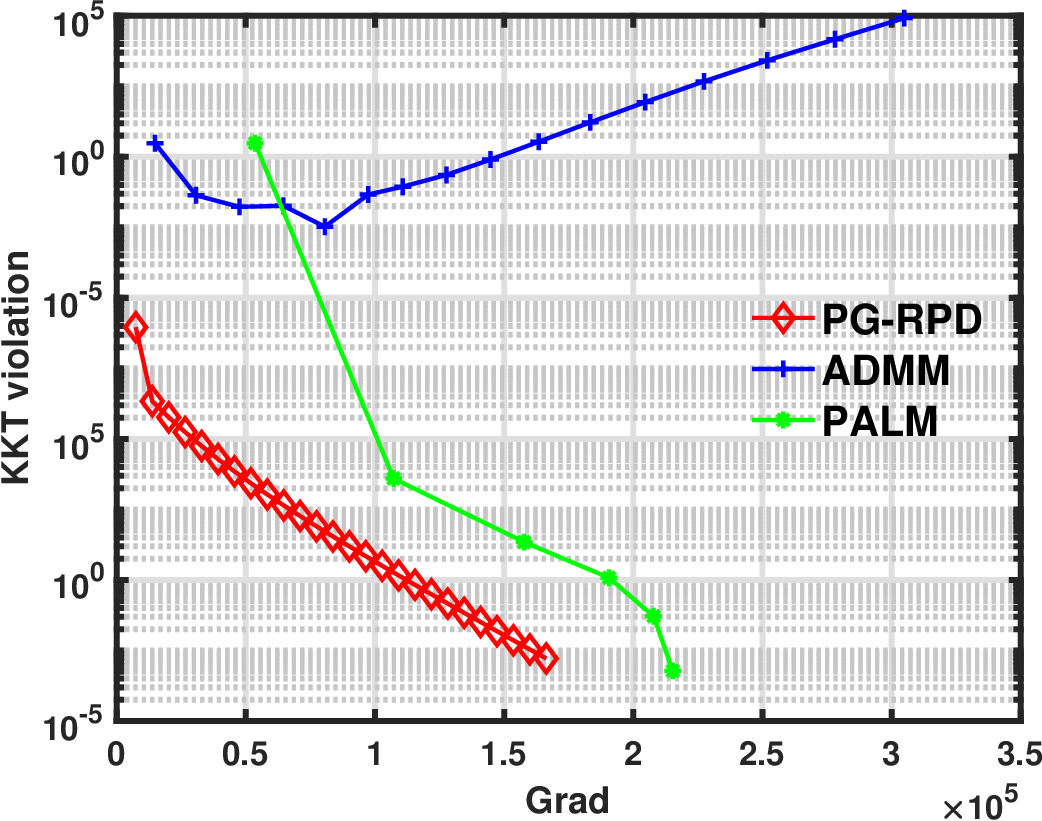}}
\subfloat[$d=$1000, $\kappa=$10000]{\includegraphics[width=39mm]{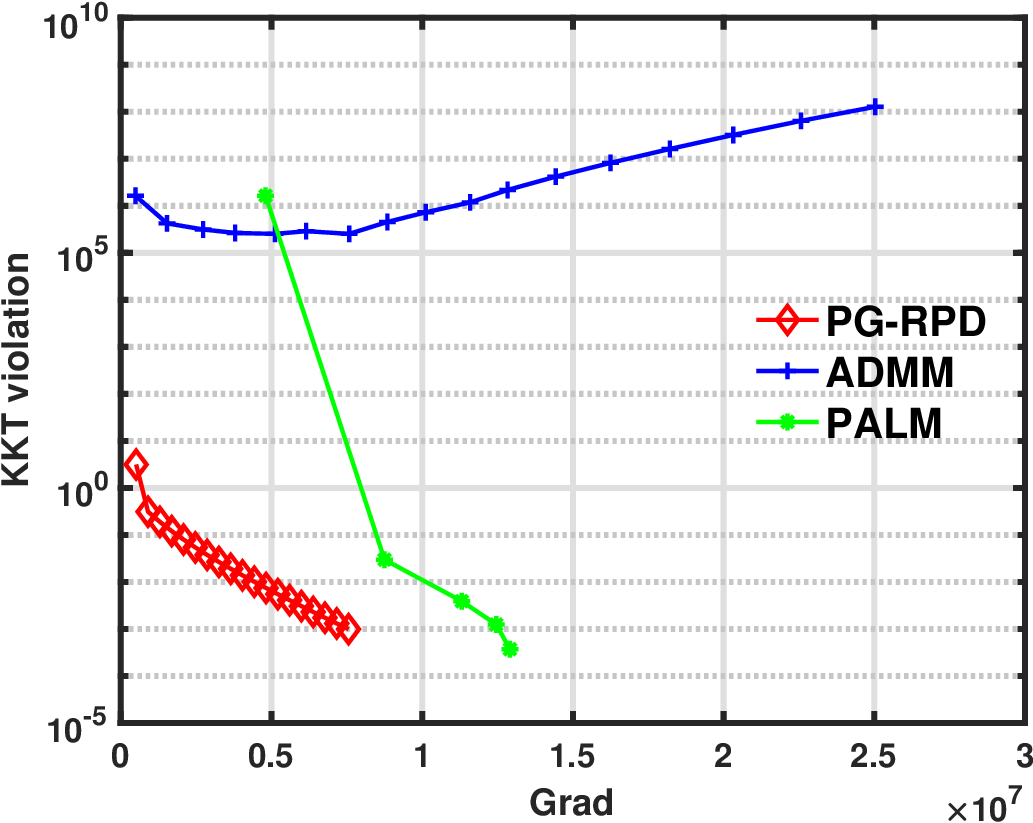}}
		\\
		\subfloat[$d=$2000, $\kappa=$2]{\includegraphics[width=39mm]{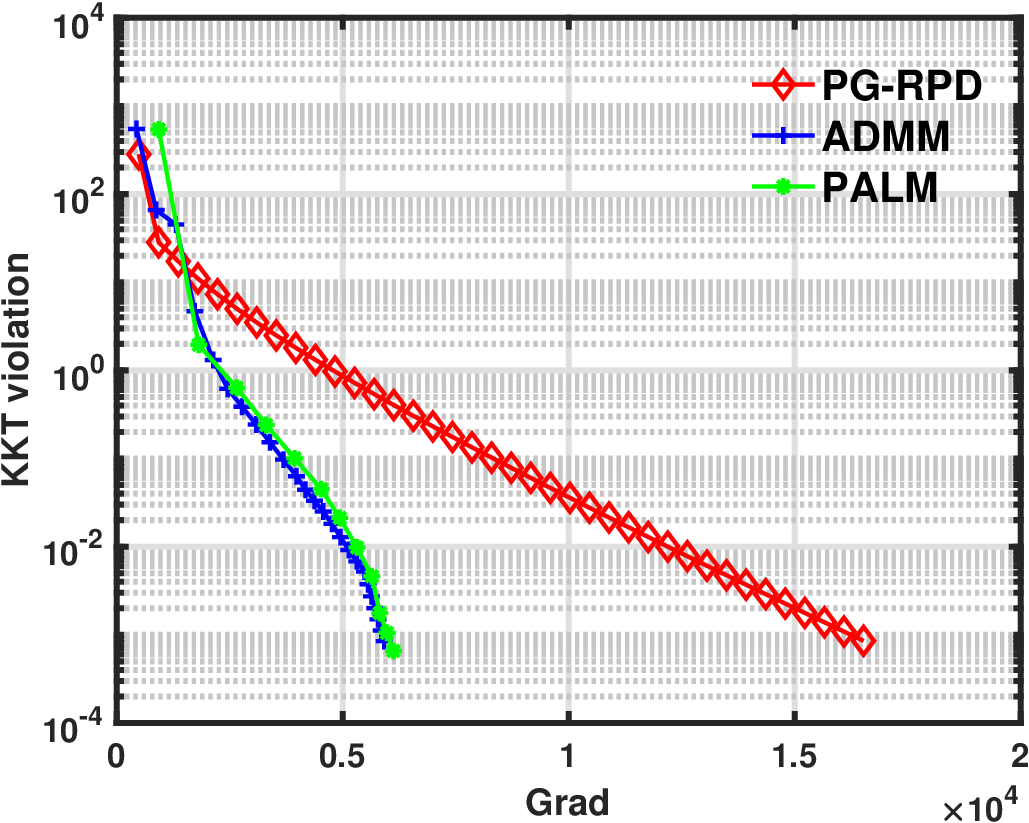}}
\subfloat[$d=$2000, $\kappa=$10]{\includegraphics[width=39mm]{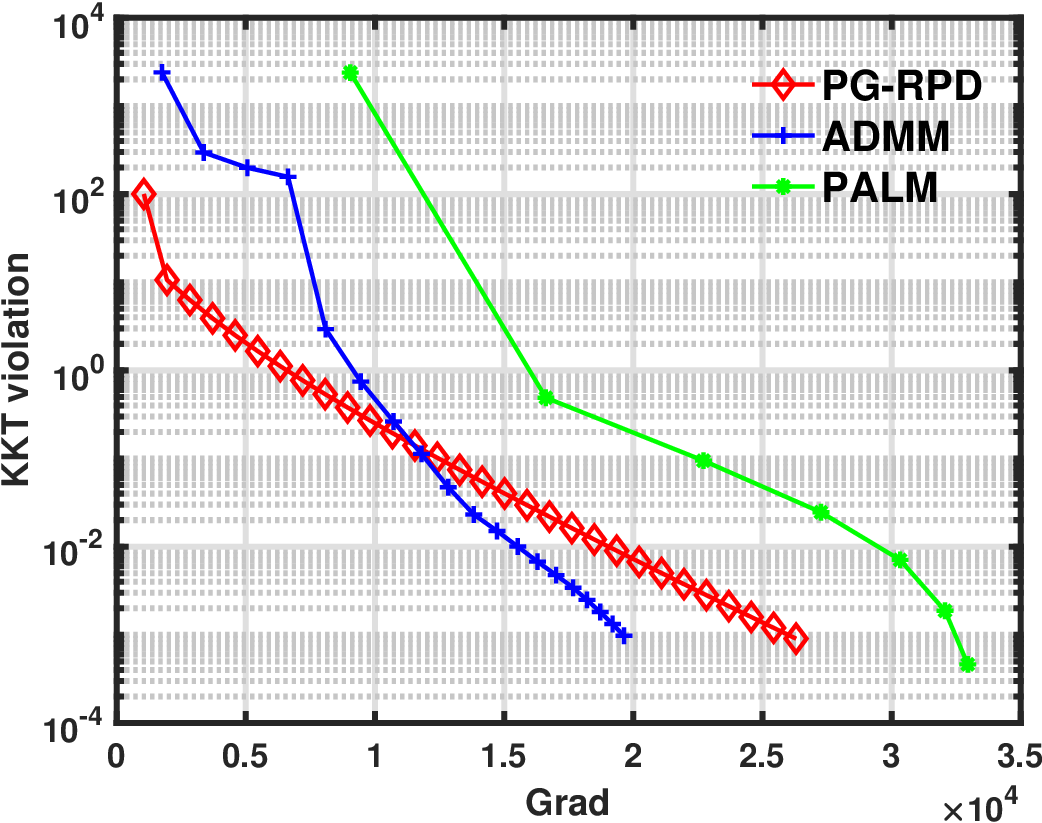}}
\subfloat[$d=$2000, $\kappa=$100]{\includegraphics[width=39mm]{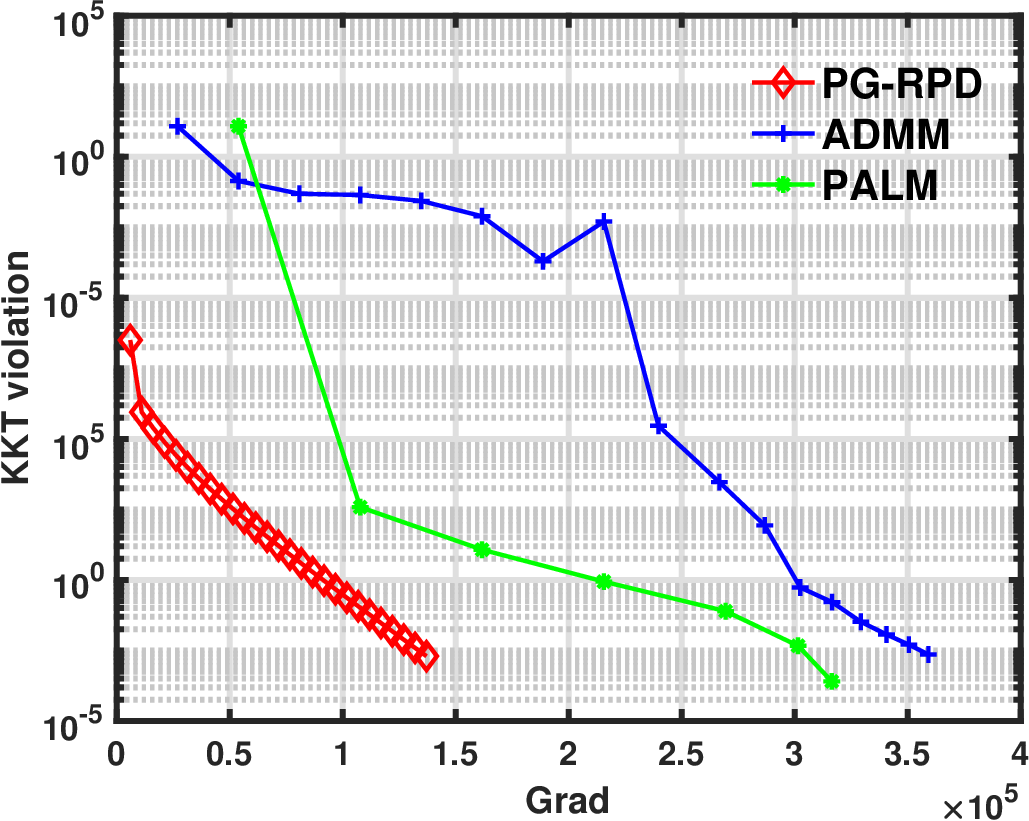}}
\subfloat[$d=$2000, $\kappa=$10000]{\includegraphics[width=39mm]{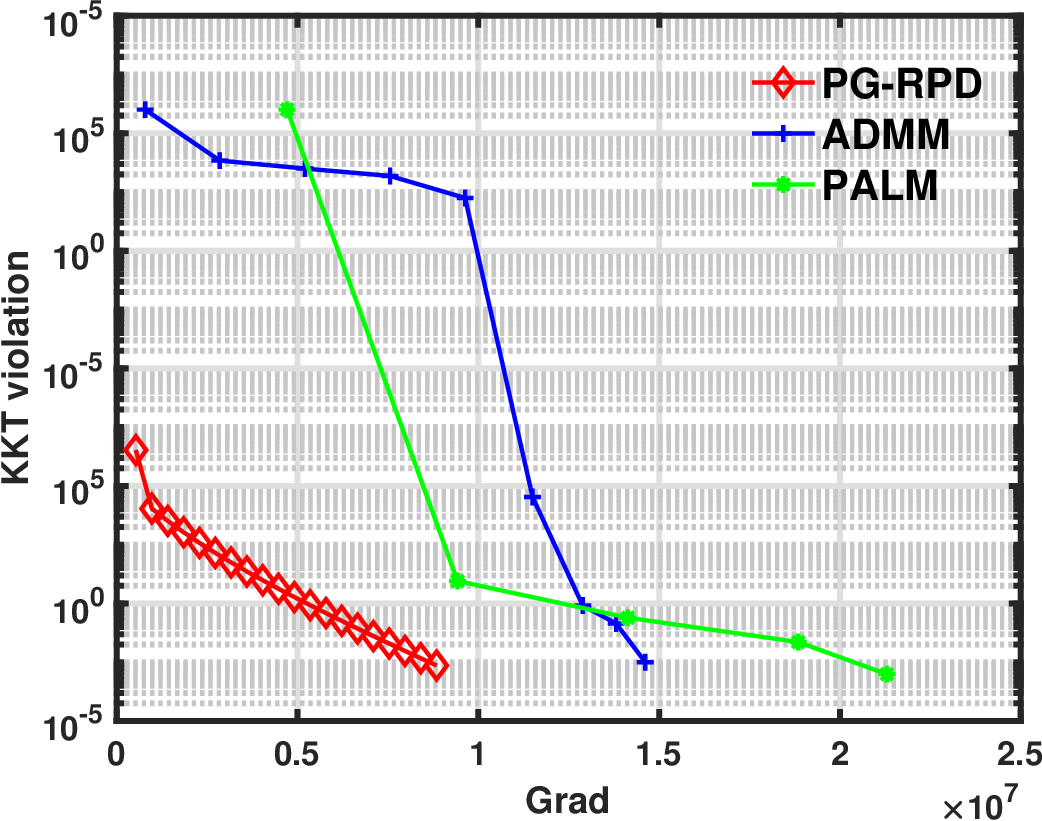}}
	\end{center} 
	\caption{Comparisons among PG-RPD, ADMM in \cite{melo2017iteration3}, and PALM in \cite{rockafellar1976augmented} on solving instances of \eqref{eq:test} with weak convexity modulus $\rho=5$}\label{fig:all3}
\end{figure}

\section{Conclusion}\label{sec:conclusion}
In this paper, we propose an inexact proximal gradient method for solving problem~\eqref{eq:model}, where subproblems are solved by a primal-dual recover method. 
Our method achieves a (nearly) optimal complexity for finding a $(\delta,\varepsilon)$-KKT point of problem \eqref{eq:model}, improving upon existing approaches in terms of the gradient Lipschitz modulus  $L_f$ and the condition number $\kappa([\overline{\vA};\vA])$. Numerical experiments further demonstrate the efficiency of our method, particularly when   $\kappa([\overline{\vA};\vA])$ is large.


%


			\bibliographystyle{plain}
			\bibliography{optim}
			
	\appendix
	
		\section{Comparison of PG-RPD with an ADMM and an ALM}
	\label{sec:matching}
	In this section, we first introduce the hard instance used to establish the lower complexity results in \cite{liu2025lowercomplexityboundsfirstorder}. We  then compare our method with two other algorithms in the subsequent subsections.
	
	\subsection{A hard instance}
	In this subsection, we introduce the hard instance $\cal{P}$ given in \cite{liu2025lowercomplexityboundsfirstorder}. 	
	An algorithm is considered to have a tight complexity result (at least for this specialized problem used to establish the lower complexity bound) if it can find an $(\vareps, \vareps)$-KKT point of this specialized problem within $
	\mathcal{O}({\kappa([\overline{\mathbf{A}}; \mathbf{A}]) L_f \Delta_{F_0}} \vareps^{-2})
	$ oracle calls up to a logarithmic term.
	
	Let $m_1$, $m_2 $, $\overline d \ge 5$ be positive integers, $m=m_1m_2$, $d=m{\overline{d}}$, and   
	\begin{eqnarray} 
		\label{eq:xblock}
		\vx=\left(\vx_1\zz, \ldots,\vx_m\zz\right)\zz\in\mathbb{R}^{d}, \text{ with }\vx_i\in\mathbb{R}^{\overline d}, \, i=1,\dots,m.
	\end{eqnarray} 
	We define a matrix  $\vH\in\mathbb{R}^{(m-1)\overline{d}\times m\overline{d}}$ by
	\begin{equation}
		\label{eq:matrixAstar}
		\newcommand{\zm}{
			\left[\begin{array}{ccccc}
				-\vI_{\overline{d}} & \vI_{\overline{d}} &&&\\
				& -\vI_{\overline{d}} &\vI_{\overline{d}} &&\\
				&& \ddots & \ddots &  \\
				&&  &-\vI_{\overline{d}} & \vI_{\overline{d}} \\
			\end{array}\right]
		}		
		\vH:=mL_f\cdot \vJ_{m}\otimes \vI_{\overline{d}}=
		mL_f\cdot
		\left.
		\,\smash[b]{\underbrace{\!\zm\!}_{\textstyle\text{$m$ blocks}}}\,
		\right\}\text{$m-1$ blocks,}
		\vphantom{\underbrace{\zm}_{\textstyle\text{$m$ blocks}}}		
	\end{equation}
	where \begin{equation}
		\label{eq:matrixJ}
		\vJ_{p}:=\left[\begin{array}{ccccc}
			-1 & 1 &&&\\
			& -1 &1 &&\\
			&& \ddots & \ddots &  \\
			&&  &-1 & 1 \\
		\end{array}\right] \in \RR^{(p-1)\times p}.
	\end{equation}
	In addition, define 
	\begin{equation}
		\label{eq:indexsetM}
		\begin{aligned}
			\mathcal{M}:=\{im_1| i=1,2,\dots, m_2-1\},& \quad \mathcal{M}^C:=\{1,2,\dots,m-1\}\backslash  \mathcal{M}, \\ n=(m- m_2){\overline{d}},& \quad \overline n=( m_2-1){\overline{d}},
		\end{aligned}
	\end{equation}
	and let  
	\begin{eqnarray}
		\label{eq:AandAbar}
		\overline{\vA}:=mL_f\cdot \vJ_{\mathcal{M}}\otimes \vI_{\overline{d}},\quad 
		\vA:=mL_f\cdot \vJ_{\mathcal{M}^C}\otimes \vI_{\overline{d}}, \quad \overline\vb = \vzero \in \RR^{\overline n}, \quad \vb = \vzero \in \RR^n,
	\end{eqnarray}
	where $\vJ_{\mathcal{M}}$ and $\vJ_{\mathcal{M}^C}$ are the rows of $\vJ_{m}$ indexed by $\mathcal{M}$ and $\mathcal{M}^C$, respectively. 
	Define $ g:\mathbb{R}^{\overline{n}}\rightarrow \mathbb{R}$ as
	\begin{equation}
		\label{eq:gbar}
		g(\vy):=\beta\|\vy\|_1=\max\left\{\vu^\top\vy~\Big|~\|\vu\|_\infty\leq \beta\right\}
	\end{equation}
	with  $\beta>0$.
	Let $f_0$ be a $\rho_f$-weakly convex function with $\rho_f>0$.
	Putting all the components given above, we obtain a specific instance of~\eqref{eq:model}, formalized as follows.
	\begin{definition}[instance $\cP$]
		\label{def:hardinstance}
		Given $\vareps\in(0,1)$ and $L_f > 0$, let $m_1, m_2$ and $\overline d$ be integers such that $m_1, m_2, \overline d \ge2$. We refer to as \emph{instance $\cP$} the instance of problem~\eqref{eq:model}, where $f_0$ is a $\rho_f$-weakly convex function with  $\rho_f>0$, $g$ is given in~\eqref{eq:gbar}, and $(\vA, \overline\vA, \vb, \overline\vb)$ is given in~\eqref{eq:AandAbar}. 
	\end{definition}
	
	The following lemma characterizes the joint condition number of  $\overline{\vA}$ and $\vA$ defined in \eqref{eq:AandAbar}.
	\begin{lemma}
		\cite[Lemma 3]{liu2025lowercomplexityboundsfirstorder}
		\label{lem:condH}
		Let $\vA$ and $\overline\vA$ be given in~\eqref{eq:AandAbar}. Then $\frac{m}{4} \le \kappa([\overline{\vA}; \vA])=\kappa(\vH)<m$.
	\end{lemma}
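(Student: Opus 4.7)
The plan is to prove the two claimed bounds on $\kappa([\overline{\vA};\vA])$ by first reducing everything to the spectrum of a well-known discrete Laplacian matrix, and then applying elementary trigonometric estimates. First I would observe that by the definition of $\cM$ and $\cM^C$ in~\eqref{eq:indexsetM}, the index sets partition $\{1,\ldots,m-1\}$, so stacking $\overline{\vA}$ on top of $\vA$ merely permutes the rows of $\vH=mL_f\cdot\vJ_m\otimes\vI_{\overline d}$. Row permutations preserve singular values, hence $\kappa([\overline{\vA};\vA])=\kappa(\vH)$, establishing the middle equality. Then by the Kronecker-product identity $\sigma(\vJ_m\otimes\vI_{\overline d})=\{\sigma_i(\vJ_m):i\}$ (each singular value repeated $\overline d$ times) together with the fact that a positive scalar factor cancels in the condition number, I get $\kappa(\vH)=\kappa(\vJ_m)$.

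Next I would compute $\vJ_m\vJ_m\zz\in\RR^{(m-1)\times(m-1)}$ directly from~\eqref{eq:matrixJ}: each row of $\vJ_m$ has exactly one $-1$ and one $+1$ in consecutive columns, so $\vJ_m\vJ_m\zz$ is the tridiagonal matrix with $2$ on the diagonal and $-1$ on both off-diagonals. This is the classical second-difference (discrete Dirichlet Laplacian) matrix, whose eigenvalues are known in closed form to be
\begin{equation*}
\lambda_k(\vJ_m\vJ_m\zz)=4\sin^2\!\bigl(\tfrac{k\pi}{2m}\bigr),\qquad k=1,2,\ldots,m-1,
\end{equation*}
via the eigenvectors $v^{(k)}_j=\sin(jk\pi/m)$ (which I would verify in one line). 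Since every eigenvalue is strictly positive, $\vJ_m$ has full row rank, and
\begin{equation*}
\kappa(\vJ_m)=\sqrt{\tfrac{\lambda_{\max}}{\lambda_{\min}^+}}=\sqrt{\tfrac{4\sin^2((m-1)\pi/(2m))}{4\sin^2(\pi/(2m))}}=\frac{\cos(\pi/(2m))}{\sin(\pi/(2m))}=\cot\!\bigl(\tfrac{\pi}{2m}\bigr).
\end{equation*}

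Finally I would convert this closed form into the two numerical bounds. For the upper bound, the elementary inequality $\sin\theta>\tfrac{2}{\pi}\theta$ on $(0,\pi/2)$ gives $\cot\theta<\cos\theta\cdot\tfrac{\pi}{2\theta}\le\tfrac{\pi}{2\theta}$, so with $\theta=\pi/(2m)$ I obtain $\kappa(\vJ_m)<m$. For the lower bound, I would use $\cos\theta\ge 1-\theta^2/2$ and $\sin\theta\le\theta$ on $(0,\pi/2)$, yielding $\cot(\pi/(2m))\ge \tfrac{2m}{\pi}-\tfrac{\pi}{4m}$, which exceeds $m/4$ for all integers $m\ge 2$ (a quick check since $2/\pi-1/4\approx0.387$ and $\pi/(4m)\le\pi/8<0.4$). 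For safety, I would handle $m=2,3$ by direct computation (e.g., $m=2$ gives $\kappa(\vJ_2)=1\ge 1/2$).

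The main obstacle is the lower bound: the asymptotic $\cot(\pi/(2m))\sim 2m/\pi\approx 0.637 m$ is comfortably above $m/4$, but the crude expansion $\cot\theta\approx 1/\theta-\theta/3$ gives an upper, not lower, bound on $\cot$, so care is required when picking the elementary inequality. Using $\cos\theta\ge 1-\theta^2/2$ together with $\sin\theta\le\theta$ (rather than more delicate Taylor remainders) keeps the argument short and covers all $m\ge 2$ uniformly, which is why I would choose that route.
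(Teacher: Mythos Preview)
Your argument is correct. The paper itself does not prove this lemma at all---it merely cites it from \cite[Lemma~3]{liu2025lowercomplexityboundsfirstorder}---so there is no in-paper proof to compare against. Your route (reduce to $\kappa(\vJ_m)$ via the row-permutation and Kronecker observations, identify $\vJ_m\vJ_m\zz$ as the tridiagonal discrete Laplacian with eigenvalues $4\sin^2(k\pi/(2m))$, hence $\kappa(\vJ_m)=\cot(\pi/(2m))$, then bound by Jordan's inequality and $\cos\theta\ge 1-\theta^2/2$) is a clean and self-contained derivation. One small presentational wrinkle: your parenthetical check ``$2/\pi-1/4\approx0.387$ and $\pi/(4m)\le\pi/8<0.4$'' does not by itself close the lower bound for $m=2$, since you actually need $m(2/\pi-1/4)\ge\pi/(4m)$, i.e.\ $0.773\ge0.393$; but you already cover this by handling $m=2,3$ directly, and in any case the paper's setup has $m=m_1m_2\ge4$, so the small cases are not even needed.
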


	\subsection{Comparisons with an ADMM}
	\label{sec:admm}
	In this subsection, we compare our results with the complexity results of the (linearized) ADMM established by Melo $\&$ Monteiro in~\cite{melo2017iteration3}. 
	Specifically, they aim to find an $\varepsilon$-KKT point of problem~\eqref{eq:model-spli}. If this point is a feasible point of problem~\eqref{eq:model}, it naturally qualifies as an $\varepsilon$-KKT point of problem~\eqref{eq:model}.
	In addition,
	they assume the boundedness of the feasible set and Assumption~\ref{ass:dual2}.

	To obtain the upper bound of the ADMM to produce an $\vareps$-KKT point of problem~\eqref{eq:model-spli}, we shall let the RHS of the four inequalities in \cite[Theorem 2.3]{melo2017iteration3} be no larger than $\varepsilon$. 
	We adopt the notations from \cite{melo2017iteration3}.
	With the substitutions 
	$f_1 \equiv {g}$, $f_2 \equiv 0, g=f_0, A_1=[-\vI ; 0], A_2=0, B=[\overline{\vA}; \vA], b=[\overline{\vb}; \vb]$, $\beta = B_2$, and $\mu_g= -L_f$
	into \cite[Problem (7)]{melo2017iteration3}, the worst-case complexity result of their ADMM is
	\begin{equation}\label{eq:bd-k}
		\max\left\{4 B_2^2\|\overline{\vA}\|^2 \varepsilon^{-2} \Delta\overline{\mathcal{L}}_0/\overline{\delta}_1,\ 2(\tau+\overline{\tau})^2\varepsilon^{-2}  \Delta\overline{\mathcal{L}}_0/\overline{\delta}_1 \right\}
	\end{equation}
	oracles, where $B_2$, $\Delta\overline{\mathcal{L}}_0$, $\overline{\delta}_1$, $\tau$, and $\overline{\tau}$ are some quantities in \cite{melo2017iteration3}.
	Specifically, $\Delta\overline{\mathcal{L}}_0$ is something like~$\Delta_F$ in our setting, and
	\begin{equation}\label{eq:beta-lb}
		B_2 > \frac{12\gamma_\theta(\tau+\overline{\tau})^2 + L_f^2}{{\lambda^+_{\min }\left([\overline{\boldsymbol{A}} ; \boldsymbol{A}][\overline{\boldsymbol{A}} ; \boldsymbol{A}]^\top\right)}(\tau-\overline{\tau}-L_f)},
	\end{equation}
	where $\gamma_\theta = \frac{\theta}{(1-|\theta-1|)^2} \ge 1$ with $\theta\in (0,2)$.
	In addition, they consider $\min_{\vx}\overline{f}(\vx)+g(\vy)+\frac{\beta}{2}\|[-\vI ; 0]\vy + [\overline{\vA}; \vA] \vx\|^2$ as the inner subproblem at each iteration. By applying an accelerated proximal gradient method to achieve an accuracy of $\vareps' >0$, the number of oracle calls required for the subroutine is $\Theta\left(\frac{\lambda_{\max }\left([\overline{\boldsymbol{A}} ; \boldsymbol{A}][\overline{\boldsymbol{A}} ; \boldsymbol{A}]^\top\right)}{\lambda^+_{\min }\left([\overline{\boldsymbol{A}} ; \boldsymbol{A}][\overline{\boldsymbol{A}} ; \boldsymbol{A}]^\top\right)} \log\frac{1}{\vareps'}\right)$.
	Pick $\overline\tau = L_f$, $\tau = 2(\overline\tau + L_f)$ and make $\Delta_1 = \overline\tau + L_f$ in \eqref{eq:bd-k}. This way, we have from \eqref{eq:bd-k} and \eqref{eq:beta-lb} that the worst-case complexity result of their ADMM becomes
	\begin{equation}\label{eq:bd-k-2}
		  \mathcal{O}\left( \kappa([\overline{\boldsymbol{A}} ; \boldsymbol{A}])\max\left\{L_f \left(\frac{\|\overline \vA\|}{\lambda^+_{\min }\left([\overline{\boldsymbol{A}} ; \boldsymbol{A}][\overline{\boldsymbol{A}} ; \boldsymbol{A}]^\top\right)} \right)^2 \varepsilon^{-2} \Delta_F,\ L_f\varepsilon^{-2}  \Delta_F \right\} \right).
	\end{equation}
	
	In contrast, under Assumption~\ref{ass:dual2}, we can directly apply the results from Remarks~\ref{rem:matchsp}(i) and~\ref{rem:matchp}(i). This establishes that the upper complexity bound of PG-RPD improves upon that of \cite{melo2017iteration3} by at least a factor of
	$
	\left( \frac{\|\overline{\mathbf{A}}\|}{\lambda^+_{\min}\bigl([\overline{\mathbf{A}}; \mathbf{A}][\overline{\mathbf{A}}; \mathbf{A}]^\top\bigr)} \right)^2.
	$
	
	\subsection{Comparisons with a linearized ALM}
	\label{sec:alm}
	In this subsection, we take the complexity results from \cite{zhang2022global} by Zhang $\&$ Luo as a benchmark for a linearized ALM. We then compare our results with theirs on the specific problem instance $\cal{P}$ given in Definition \ref{def:hardinstance}. Their analysis relies on the following assumption.
	\begin{assumption}
		\label{ass:polyhedral2}
		$g$ is an indicator set of a polyhedral set $P=\{\vx : \vG \vx \preceq \vh\}\subset \mathbb{R}^n$, where $\vG \in \mathbb{R}^{\ell \times n}$, and $\vh \in \mathbb{R}^{\ell}$. Here $\preceq$ means the coordinatewise $\leq$.  
	\end{assumption}
	
		We adopt the notations from \cite{zhang2022global}.
	Under Assumption \ref{ass:polyhedral2}, they present in \cite[Theorem 2.4]{zhang2022global} that their linearized ALM find a  $\varepsilon$-KKT point with
	$$
	k\leq \vareps^{-2} C \max\left\{  B_1, B_3
	\right\},
	$$
	outer iterations to achieve an $\vareps$-KKT point of problem \eqref{eq:model}, 
	where $C$, $B_1$ and $B_3$ are {quantities} defined in their paper. 
	Specifically, they let
	$$
	C=\left(\phi^0-\underline{f}\right) \cdot \max \{4 c, 2 / \alpha,  \beta / L_f\},\,\,
	B_1= \left(1+\sigma_{\max }(\vA) \frac{1+2c L_f}{2 c L_f}\right)^2, \text{ and }
	$$
	$$
	B_3:= \left(\left(3L_f+L_f \sigma_{\max }(\vA)^2+2 / c\right)+L_f \sigma_{\max }(\vA) \sqrt{B_1}+3L_f\right)^2,
	$$
	where  $c>0$, $\alpha>0$,   $\beta>0$,
	$\phi^0-\underline{f}$
	is something like 
	$\Delta_F$ in our setting, and
	$\sigma_{\max }(\vA)$ denotes the largest singular value of $\vA$.
	In Section 4.3 of their paper, their complexity analysis requires that 
	$$
	c<1 /\left(4 L_f+L_f \sigma_{\max }^2(A)\right), \,\, \alpha<\frac{c L_f^2}{\sigma_{\max }(A)^2},\,\,\beta<\min \left\{\frac{1}{30}, \frac{\alpha}{12 p  {\sigma}^2_5}\right\},
	$$
	and $\sigma_5>0$ is an error bound constant satisfying
	$$\sigma_5=\frac{\sqrt{2}\left(\overline{\theta}\left(L_f+L_f \sigma_{\max }(A)+3L_f\right)^2+1\right)}{2L_f},$$
	where
	$
	\overline{\theta}=\max _{\overline{\mathbf{M}} \in \mathcal{B}(\mathbf{M})} \sigma_{\max }^2(\overline{\vM}) / \sigma_{\min }^4(\overline{\vM})
	$
	with $\mathcal{B}(\mathbf{M})$ being the set of all submatrices of $\mathbf{M}$ with full row rank, and
	$
	\mathbf{M}=\left[\begin{array}{cc}
		\vA^\top & \vG^\top \\
		0 & \vI
	\end{array}\right].
	$
	
	By  $\max\{B_1, B_3\}\ge B_3$,
	$B_3 \ge L_f^2 \sigma_{\max }(\vA)^4$, and $C\ge\left(\phi^0-\underline{f}\right) \cdot 2 / \alpha$, the number of iterations given below is a worst-case complexity results of their methods:
	\begin{equation}
		\label{eq:mink}
		 \mathcal{O}(\Delta_F L_f  \sigma^6_{\max }(\vA) \vareps^{-2}).
	\end{equation}

	Now, we apply \eqref{eq:mink} to the hard instance $\cal{P}$ given in Definition \ref{def:hardinstance}.
	By the definition of $\vA$ in \eqref{eq:AandAbar}, the smallest positive eigenvalue $\lambda_{\text {min }}^+$ of $\vA\vA\zz$ is  $4m^2 L_f^2 \sin^2(\frac{\pi}{2m_1})$, which is roughly $m_2^2 L_f^2$, and the norm of $\|\vA\|$ equals $ 2m L_f \sin(\frac{(3m_1-1)\pi}{6m_1}) \approx 2m L_f$. From Lemma \ref{lem:condH}, the condition number of $[\overline{\vA}; \vA]$ is approximately~$m$.
	Hence it holds 
	\begin{equation}
		\label{eq:normA}
		\sigma_{\max }(\vA)= \|\vA\|  \approx  2\kappa([\overline{\vA};\vA]) L_f .
	\end{equation}
	Substituting this into \eqref{eq:mink}, the upper complexity bound in \cite{zhang2022global} can be higher than our results in Remarks~\ref{rem:matchsp}(ii) and~\ref{rem:matchp}(ii) by at least  a factor of $\kappa^5([\overline{\vA};\vA]) L_f^6$.

			\end{document}